\newtheorem{thm}{Theorem}[section]
\newtheorem{cor}[thm]{Corollary}
\newtheorem{lem}[thm]{Lemma}
\newtheorem{prop}[thm]{Proposition}
\theoremstyle{definition}
\newtheorem{defn}[thm]{Definition}
\newtheorem{example}[thm]{Example}
\theoremstyle{remark}
\newtheorem{rem}[thm]{Remark}
\numberwithin{equation}{section}
\begin{document}
\title[Multi-dimensional Besicovitch almost periodic...]{Multi-dimensional Besicovitch almost periodic type functions and applications}
\author{M. Kosti\' c}
\address{Faculty of Technical Sciences,
University of Novi Sad,
Trg D. Obradovi\' ca 6, 21125 Novi Sad, Serbia}
\email{marco.s@verat.net}

{\renewcommand{\thefootnote}{} \footnote{2010 {\it Mathematics
Subject Classification.} 42A75, 43A60, 47D99.
\\ \text{  }  \ \    {\it Key words and phrases.} Besicovitch almost periodic functions in ${\mathbb R}^{n}$, Doss almost periodic functions in ${\mathbb R}^{n}$,
abstract Volterra integro-differential equations.
\\  \text{  }  
The author is partially supported by grant 451-03-68/2020/14/200156 of Ministry
of Science and Technological Development, Republic of Serbia.}}

\begin{abstract}
In this paper, we analyze 
multi-dimensional Besicovitch almost periodic type functions.
We clarify the main structural properties for the introduced classes of Besicovitch almost periodic type  
functions, explore the notion of Besicovitch-Doss almost periodicity in the multi-dimensional setting, and provide some applications of our results to
the abstract Volterra integro-differential equations and the partial differential equations.
\end{abstract}
\maketitle

\section{Introduction and preliminaries}

As is well known, the notion of almost periodicity was introduced by the Danish mathematician H. Bohr around 1924-1926 and later generalized by many other authors (see the research monographs \cite{besik}, \cite{diagana}, \cite{fink}, \cite{gaston}, \cite{nova-mono}, \cite{nova-selected}, \cite{188}, \cite{pankov} and \cite{30} for further information concerning almost periodic functions and their applications).
Suppose that $(X,\| \cdot \|)$ is a complex Banach space and $F : {\mathbb R}^{n} \rightarrow X$ is a continuous function ($n\in {\mathbb N}$). Then it is said that the function $F(\cdot)$ is almost periodic if and only if for each $\epsilon>0$
there exists $l>0$ such that for each ${\bf t}_{0} \in {\mathbb R}^{n}$ there exists ${\bf \tau} \in B({\bf t}_{0},l)\equiv \{ {\bf t} \in {\mathbb R}^{n} : |{\bf t}-{\bf t}_{0}|\leq l\}$ with
\begin{align*}
\bigl\|F({\bf t}+{\bf \tau})-F({\bf t})\bigr\| \leq \epsilon,\quad {\bf t}\in {\mathbb R}^{n};
\end{align*}
here, $|\cdot -\cdot|$ denotes the Euclidean distance in ${\mathbb R}^{n}$ and $\tau$ is usually called an $\epsilon$-almost period of $F(\cdot)$.
Any trigonometric polynomial in ${\mathbb R}^{n}$ is almost periodic, and we know that a continuous function $F(\cdot)$ is almost periodic if and only if there exists a sequence of trigonometric polynomials in ${\mathbb R}^{n}$ which converges uniformly to $F(\cdot).$

If the function $F : {\mathbb R}^{n} \rightarrow X$ is locally $p$-integrable, where $1\leq p<\infty,$
then we say that
$F(\cdot)$ is Stepanov-$p$-almost periodic if and only if for every $\epsilon>0$
there exists $l>0$ such that for each ${\bf t}_{0} \in {\mathbb R}^{n}$ there exists ${\bf \tau} \in B({\bf t}_{0},l) \cap {\mathbb R}^{n}$ with
\begin{align*}
\bigl\|F({\bf t}+{\bf \tau}+{\bf u})-F({\bf t}+{\bf u})\bigr\|_{L^{p}([0,1]^{n} : X)} \leq \epsilon,\quad {\bf t}\in {\mathbb R}^{n}.
\end{align*}
Further on, we say that a locally $p$-integrable function $F : {\mathbb R}^{n} \rightarrow X$ is:
\begin{itemize}
\item[(i)] equi-Weyl-$p$-almost periodic if and only if, for every $\epsilon>0,$ there exist two finite real numbers
$l>0$
and
$L>0$ such that for each ${\bf t}_{0}\in {\mathbb R}^{n}$ there exists $\tau \in B({\bf t}_{0},L)\cap {\mathbb R}^{n}$ with
\begin{align*}
\sup_{{\bf t}\in {\mathbb R}^{n}}\Biggl[l^{-\frac{n}{p}} \bigl\| F({\bf \tau}+\cdot)-F(\cdot) \bigr\|_{L^{p}({\bf t}+l[0,1]^{n} : X)}\Biggr] <\epsilon.
\end{align*}
\item[(ii)] Weyl-$p$-almost periodic if and only if, for every $\epsilon>0,$ there exists a finite real number
$L>0$ such that for each ${\bf t}_{0}\in {\mathbb R}^{n}$ there exists $\tau \in B({\bf t}_{0},L) \cap {\mathbb R}^{n}$ with
\begin{align*}
\limsup_{l\rightarrow +\infty}
\sup_{{\bf t}\in {\mathbb R}^{n}}
\Biggl[l^{-\frac{n}{p}}\bigl\| F({\bf \tau}+\cdot)-F(\cdot) \bigr\|_{L^{p}({\bf t}+l[0,1]^{n}: X)} \Biggr]
<\epsilon.
\end{align*}
\end{itemize}

It is well known that any Bohr almost periodic function is Stepanov-$p$-almost periodic, as well as that any Stepanov-$p$-almost periodic function is equi-Weyl-$p$-almost periodic. The most general class is the class of Weyl-$p$-almost periodic functions which contains all others.
Furthermore, we know that any
equi-Weyl-$p$-almost periodic function is Besicovitch-$p$-almost periodic, and that there exists a Weyl-$p$-almost periodic function $f : {\mathbb R} \rightarrow {\mathbb R}$ which is not Besicovitch-$p$-almost periodic (\cite{nova-selected}). 

The notion of Besicovitch-$p$-almost periodicity for a function $F : {\mathbb R}^{n} \rightarrow X$ can be introduced in many equivalent ways; traditionally, if $F\in L_{loc}^{p}({\mathbb R}^{n} : X),$ then we first define 
$$
\|F\|_{{\mathcal M}^{p}}:=\limsup_{t\rightarrow +\infty}\Biggl[ \frac{1}{(2t)^{n}}\int_{[-t,t]^{n}}\| F({\bf s})\|^{p}\, d{\bf s}\Biggr]^{1/p}.
$$
It can be simply proved that
$
\|\cdot \|_{{\mathcal M}^{p}}$ is a seminorm on the space  ${\mathcal M}^{p} ({\mathbb R}^{n} : X)$ consisting of those $L_{loc}^{p}({\mathbb R}^{n} : X)$-functions $F(\cdot)$ for which $
\|F\|_{{\mathcal M}^{p}}<\infty .$ Denote $K_{p}({\mathbb R}^{n} : X) := \{ f \in {\mathcal M}^{p}({\mathbb R}^{n} : X) \, ; \,
\|F\|_{{\mathcal M}^{p}}= 0\} $
and
$$
M_{p}({\mathbb R}^{n} : X) :={\mathcal M}^{p}({\mathbb R}^{n} : X) / K_{p}({\mathbb R}^{n} : X).
$$
The seminorm $
\|\cdot \|_{{\mathcal M}^{p}}$ on ${\mathcal M}^{p}({\mathbb R}^{n} : X)$ induces the norm $
\|\cdot \|_{M^{p}}$ on $M^{p}({\mathbb R}^{n} : X)$ under which $M^{p}({\mathbb R}^{n} : X)$ is complete; hence, $(M^{p}({\mathbb R}^{n} : X),\|\cdot \|_{M^{p}})$ is a Banach space. It is said that a function $F\in L_{loc}^{p}({\mathbb R}^{n} : X)$ is Besicovitch-$p$-almost periodic if and only if there exists a sequence of
trigonometric polynomials (almost periodic functions, equivalently) converging to $F(\cdot)$ in $(M^{p}({\mathbb R}^{n} : X),\|\cdot \|_{M^{p}}).$
The vector space consisting of all Besicovitch-$p$-almost periodic functions is denoted by $B^{p}({\mathbb R}^{n} : X).$
Clearly, $B^{p}({\mathbb R}^{n} : X)$ is a closed subspace of $M^{p}({\mathbb R}^{n} : X)$ and therefore a Banach space itself. Concerning the Banach space $M^{p}({\mathbb R}^{n} : X)$, we would like to recall that this space is not separable for any finite exponent $p\geq 1;$ see, e.g., \cite[Theorem 18]{pic} which concerns the one-dimensional case.

For further information about 
Besicovitch almost periodic functions, Besicovitch almost automorphic functions and their applications, we refer the reader to
\cite{andreano,andreano1,avant,ayachi,bass,berta,besik1,besik2,folner}, \cite{bruno0,bruno, casado,casado1,fei,grande1,grande2,hilman}, \cite{iann,lenz,y-li,marcin,morsli,panov,piao,pic,sango} and references cited therein;
we would like to specially emphasize here the important research monograph \cite{pankov} by A. A. Pankov. The spatially Besicovitch almost periodic
solutions for certain classes of nonlinear second-order elliptic equations, single higher-order hyperbolic equations and nonlinear
Schr\"odinger equations have been investigated in the fifth chapter of this monograph. For the basic source of information about homogenization in algebras with mean value and generalized Besicovitch spaces (the work of J. L. Woukeng and his coauthors), we refer the reader to \cite[Part II, Chapter 9, pp. 619--621]{nova-selected}.

On the other hand, in \cite{marko-manuel-ap}, A. Ch\'avez et al. have analyzed
various classes of almost periodic type functions of the form $F : \Lambda \times X\rightarrow Y,$ where $(Y,\|\cdot \|_{Y})$ is a complex Banach space and $\emptyset \neq  \Lambda \subseteq {\mathbb R}^{n}$. This research has been continued in \cite{marko-manuel-ap-st} and \cite{fedorov}, where we have analyzed the Stepanov classes and the Weyl classes of multi-dimensional almost periodic functions $F : \Lambda \times X\rightarrow Y.$
Here it worth noticing that
the concept Besicovitch-$p$-almost periodicity has not been well explored for the functions of the form 
$F : \Lambda \rightarrow X,$ where $\emptyset \neq  \Lambda \subseteq {\mathbb R}^{n}$ and $\Lambda\neq {\mathbb R}^{n}$ (some particular results in the one-dimensional setting are given in the monograph \cite{nova-mono}, with $\Lambda =[0,\infty)$).
This fact has strongly influenced us to write this paper,
in which we continue the research studies \cite{marko-manuel-ap,marko-manuel-ap-st,fedorov,c1,doss-rn} by investigating the multi-dimensional Besicovitch almost periodic type functions $F : \Lambda \times X \rightarrow Y,$ where $(Y,\| \cdot \|_{Y})$
is a complex Banach space and $\emptyset \neq \Lambda \subseteq {\mathbb R}^{n}.$ It is worth noting that this is probably the first research article which examines the existence and uniqueness of Besicovitch-$p$-almost periodic solutions for certain classes of PDEs on some proper subdomains of ${\mathbb R}^{n};$ even the most simplest examples of quasi-linear partial differential equations of first order considered here vividly exhibit the necessity of further analyses of Besicovitch-$p$-almost periodic functions which are not defined on the whole Euclidean space ${\mathbb R}^{n}.$

The organization and main ideas
of this paper can be briefly described as follows. In Subsection \ref{karambita}, we recall the basic definitions and results from the theory of Lebesgue spaces with variable exponents
$L^{p(x)}$. The main aim of Section \ref{anatolia} is to introduce and analyze various classes of multi-dimensional Besicovitch almost periodic type functions. We start this section by introducing the class $e-({\mathcal B},\phi,{\rm F})-B^{p(\cdot)}(\Lambda \times X : Y),$ where $\Lambda$ is a general non-empty subset of
${\mathbb R}^{n},$ $p\in {\mathcal P}(\Lambda),$ $\phi : [0,\infty) \rightarrow [0,\infty)$ is Lebesgue measurable and ${\rm F} : (0,\infty) \rightarrow (0,\infty);$
see Definition \ref{ebe}, which is crucial for our further work. The class $PAP_{0,p}(\Lambda,{\mathcal B},{\rm F},\phi)$ of weighted ergodic components, introduced recently in \cite[Definition 6.4.13]{nova-selected}, makes a proper subclass of the class 
$e-({\mathcal B},\phi,{\rm F})-B^{p(\cdot)}(\Lambda \times X : Y)$ since its definition is obtained by plugging the trivial sequence $(P_{k}\equiv 0)$ of trigonometric polynomials in Definition \ref{ebe} (we omit the term ``${\mathcal B}$'' from the notation for the functions of the form $F :\Lambda \rightarrow Y$).
Before proceeding any further, we would like to note that there is a large class of PDEs of first order (second order) whose solutions
are not Besicovitch almost periodic in ${\mathbb R}^{n}$ and which belong to the class $PAP_{0,p}(\Lambda,{\mathcal B},{\rm F},\phi),$ where $\emptyset \neq \Lambda\subsetneq {\mathbb R}^{n}.$  For instance, we have the following:

\begin{example}\label{123}
\begin{itemize}
\item[(i)]
Let $a$ and $c$ be two non-zero real numbers; then the classical $C^{1}$-solution
of the
equation $au_{x}+cu=0$ is given by $u(x,y)=g(y)e^{-(c/a)x},$ $(x,y)\in {\mathbb R}^{2}.$ Keeping in mind the notion introduced after Definition \ref{prespansko}, it can be simply shown that any non-trivial solution of this equation cannot be Besicovitch almost periodic in ${\mathbb R}^{2}$ as well as that the solution always belongs to the class $PAP_{0,1}([0,\infty) \times {\mathbb R},{\rm F},x)$ provided that the function $g(\cdot)$ is Besicovitch bounded and $\lim_{t\rightarrow +\infty}{\rm F}(t)=0$ (see also Example \ref{fat} and Example \ref{maric0} below).
\item[(ii)] The solutions of second-order PDEs on rectangular domains, obtained by the well known method of separation of
variables, can belong to the space $PAP_{0,p}(\Lambda,{\mathcal B},{\rm F},\phi)$. Consider, for the illustration purposes, the heat equation $u_{t}(x,t)=u_{xx}(x,t)$ on the domain $\Lambda =[0,2\pi] \times [0,\infty),$ equipped with the initial conditions $u(x,0)=f(x) \in L^{1}[0,2\pi]$ and $u(0,t)=u(2\pi,t)=0.$ A unique solution of this problem is given by
$$
u(x,t)=\sum_{k=1}^{\infty}b_{k}\sin(kx/2)e^{-\frac{k^{2}}
{4}t},\quad (x,t)\in \Lambda,
$$
where $f(x)=\sum_{k=1}b_{k}\sin(kx/2),$ $x\in [0,2\pi].$ Since there exists a finite real constant $M>0$ such that $|b_{k}|\leq M$ for all $k\in {\mathbb N},$ a simple computation shows that $u(x,t)\in PAP_{0,1}(\Lambda,t^{-\zeta},x)$ for any real number $\zeta>0.$
\end{itemize}
\end{example}

We provide some structural characterizations of class $e-({\mathcal B},\phi,{\rm F})-B^{p(\cdot)}(\Lambda \times X : Y)$ in Proposition \ref{anat} and Theorem \ref{gs} and Proposition \ref{sdffds} (a special attention is paid to the case in which $\phi(x)\equiv x^{\alpha}$ for some $\alpha>0$). A composition principle for multi-dimensional Besicovitch almost periodic functions is clarified in Theorem \ref{krompo}. In Subsection \ref{normalno}, we investigate the notion of
multi-dimensional Besicovitch normality. The class of Besicovitch$-({\mathrm R}, {\mathcal B},\phi,{\rm F})-B^{p(\cdot)}$-normal functions is introduced in Definition
\ref{dfggg} and characterized after that in Proposition \ref{raj-sed}, Proposition \ref{kursk-kursk} and Proposition \ref{sdffds1}.

In Section \ref{novs}, we consider the multi-dimensional analogues of the important research results established by R. Doss in \cite{doss}-\cite{doss1}.
We pay special attention to the analysis of conditions (A), (A)$_{\infty}$, and (AS); see Theorem \ref{paramet}, Proposition \ref{raj} and Proposition \ref{dawq} for some results obtained in this direction. Subsection \ref{bea} is focused on the analysis of  condition (B); the main results obtained in this part are Proposition \ref{zaed} and Proposition \ref{zaed-multi}. We feel it is our duty to emphasize that the above-mentioned results of R. Doss
are primarily intended for the analysis of one-dimensional Besicovitch almost periodic type functions as well as that we have faced ourselves with many serious problems concerning the multi-dimensional extensions of these results. 

Some applications to the abstract Volterra integro-differential equations are furnished in Section \ref{convinv}; it is worth noting that we establish here some new results about the convolution invariance of Besicovitch-$p$-almost periodicity under the actions of infinite convolution products,
and a new result concerning the usually considered convolution invariance of Besicovitch-$p$-almost periodicity. We provide some new applications in the one-dimensional setting, a new application in the analysis of the existence and uniqueness of Besicovitch-$p$-almost periodic solutions of the abstract semilinear fractional Cauchy inclusions and the abstract nonautonomous
differential equations of first order, some new applications to the inhomogeneous heat equation in ${\mathbb R}^{n}$ and evolution systems generated by the bounded perturbations of the Dirichlet Laplacian. 

The final section of paper is reserved for conclusions and final comments about the considered notion. We explain here that the notion of Besicovitch-$(p,c)$-almost periodicity, where $c\in {\mathbb C}\setminus \{0\},$ cannot be so simply introduced and analyzed. Concerning some open problems proposed, we would like to notice that we introduce the notion of admissibility  with respect to the class ${\mathcal C}_{\Lambda}$ of Besicovitch almost periodic type functions in Definition \ref{bes-ext}, and propose  after that an open problem concerning the extensions of Besicovitch almost periodic type functions to the whole Euclidean space ${\mathbb R}^{n}$; a similar question has recently been posed for the class of (equi-)Weyl almost periodic type functions in \cite{nova-selected}. In addition to the above, we provide numerous illustrative examples and remarks about the multi-dimensional Besicovitch almost periodic type functions under our consideration.

We use the standard notation throughout the paper.
We assume that $(X,\| \cdot \|)$, $(Y, \|\cdot\|_Y)$ and $(Z, \|\cdot\|_Z)$ are complex Banach spaces, $n\in {\mathbb N},$ $\emptyset  \neq \Lambda \subseteq {\mathbb R}^{n},$
${\mathcal B}$ is a non-empty collection of non-empty subsets of $X,$ and ${\mathrm R}$ is a non-empty collection of sequences in ${\mathbb R}^{n}.$ We assume that
for each $x\in X$ there exists $B\in {\mathcal B}$ such that $x\in B;$ $(e_{1},e_{2},...,e_{n})$ denotes the standard basis of ${\mathbb R}^{n}.$ By
$L(X,Y)$ we denote the Banach algebra of all bounded linear operators from $X$ into
$Y;$ $L(X,X)\equiv L(X)$. If $A: D(A) \subseteq X \mapsto X$ is a closed linear operator,
then its range will be denoted by
$R(A)$; ${\rm I}$ stands for the identity operator on $Y.$ 
Set 
${\mathbb N}_{n}:=\{1,..., n\}$ and $\Delta_{n}:=\{ (t,t,...,t)  \in {\mathbb R}^{n} : t\in {\mathbb R} \}$. By $C_{b}({\mathbb R}^{n} : X)$ we denote the Banach space of all bounded continuous functions $F: {\mathbb R}^{n} \rightarrow X$ equipped with the sup-norm. By a convex polyhedral in ${\mathbb R}^{n},$ we mean any set $\Lambda$ of the form
$$
\Lambda=\bigl\{ \alpha_{1} {\bf v}_{1} +\cdot \cdot \cdot +\alpha_{n}{\bf v}_{n}  : \alpha_{i} \geq 0\mbox{ for all }i\in {\mathbb N}_{n} \bigr\},
$$ 
where $(v_{1},v_{2},...,v_{n})$ is a basis of ${\mathbb R}^{n}.$

For the sequel, we need the following result which can be deduced in 
almost the same way as in the proof of \cite[Proposition 2]{doss}:

\begin{lem}\label{lemara}
Suppose that the function $F: {\mathbb R}^{n}\rightarrow X$ is Bohr almost periodic. If for every $a_{1}\neq 0,...,a_{n}\neq 0$ we have ($a=( a_{1},...,a_{n})$):
$$
\lim_{k\rightarrow +\infty}\frac{1}{k}\sum_{j=0}^{k-1}F({\bf t}+(k-1)a)=0,
$$
uniformly in ${\bf t}\in {\mathbb R}^{n},$ then $F\equiv 0.$
\end{lem}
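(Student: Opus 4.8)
The plan is to argue through the Bohr--Fourier theory of $X$-valued almost periodic functions and to show that the stated averaging hypothesis forces every Bohr--Fourier coefficient of $F$ to vanish; the uniqueness theorem then yields $F\equiv 0$. Recall that a Bohr almost periodic $F:{\mathbb R}^{n}\rightarrow X$ possesses a mean value $M\{F\}:=\lim_{T\rightarrow +\infty}(2T)^{-n}\int_{[-T,T]^{n}}F({\bf t})\, d{\bf t}$, that for each frequency $\lambda \in {\mathbb R}^{n}$ the coefficient $a_{\lambda}(F):=M\{F(\cdot)e^{-i\langle \lambda,\cdot\rangle}\}$ is well defined, and that $F\equiv 0$ as soon as $a_{\lambda}(F)=0$ for every $\lambda \in {\mathbb R}^{n}$. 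I shall also use the elementary estimate $\|M\{G\}\|\leq \sup_{{\bf t}\in {\mathbb R}^{n}}\|G({\bf t})\|$ together with the translation invariance $M\{G(\cdot+c)\}=M\{G\}$, both valid for Bohr almost periodic $G$. (Here I read the $j$-th summand in the hypothesis as $F({\bf t}+ja)$, i.e. the Ces\`aro mean along the progression ${\bf t},{\bf t}+a,{\bf t}+2a,\ldots$)

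First I would fix $a=(a_{1},\ldots,a_{n})$ with $a_{i}\neq 0$ for all $i$ and introduce the averages $H_{k}:=k^{-1}\sum_{j=0}^{k-1}F(\cdot+ja)$. Being finite sums of translates of $F$, each $H_{k}$ is again Bohr almost periodic, and the standing hypothesis says precisely that $H_{k}\rightarrow 0$ uniformly on ${\mathbb R}^{n}$. Next I would compute the Fourier coefficients of $H_{k}$: the substitution ${\bf s}={\bf t}+ja$ together with the translation invariance of the mean gives $M\{F(\cdot+ja)e^{-i\langle \lambda,\cdot\rangle}\}=e^{ij\langle \lambda,a\rangle}a_{\lambda}(F)$, whence
$$
a_{\lambda}(H_{k})=a_{\lambda}(F)\cdot \frac{1}{k}\sum_{j=0}^{k-1}e^{ij\langle \lambda,a\rangle}.
$$
On the other hand, the uniform convergence $H_{k}\rightarrow 0$ and the bound on the mean yield $\|a_{\lambda}(H_{k})\|\leq \sup_{{\bf t}\in {\mathbb R}^{n}}\|H_{k}({\bf t})\|\rightarrow 0$, so the right-hand side above must tend to $0$ for every admissible $a$.

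The decisive point is the choice of $a$. For $\lambda=0$ any admissible $a$ gives $k^{-1}\sum_{j=0}^{k-1}1=1$, so $a_{0}(F)=0$ immediately. For $\lambda\neq 0$ I would select $a$ with all coordinates nonzero and $\langle \lambda,a\rangle=2\pi$; then $e^{ij\langle \lambda,a\rangle}=1$ for every $j$, the averaging factor equals $1$ identically in $k$, and the displayed identity forces $a_{\lambda}(F)=0$. The \emph{main obstacle} is exactly the existence of such $a$: one must solve the single scalar equation $\sum_{i}\lambda_{i}a_{i}=2\pi$ while keeping every $a_{i}$ away from $0$, which is where the one-dimensional argument of \cite{doss} (where one simply takes $a=2\pi/\lambda$) has to be adapted. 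This can be arranged by fixing a coordinate $i_{0}$ with $\lambda_{i_{0}}\neq 0$, assigning arbitrary nonzero values to the remaining coordinates so that $\sum_{i\neq i_{0}}\lambda_{i}a_{i}\neq 2\pi$ (a one-parameter perturbation avoids this single value, and if all other $\lambda_{i}$ vanish the sum is $0\neq 2\pi$ automatically), and then setting $a_{i_{0}}=\bigl(2\pi-\sum_{i\neq i_{0}}\lambda_{i}a_{i}\bigr)/\lambda_{i_{0}}\neq 0$. Having established $a_{\lambda}(F)=0$ for every $\lambda\in {\mathbb R}^{n}$, the uniqueness theorem for Bohr almost periodic functions gives $F\equiv 0$, completing the proof.
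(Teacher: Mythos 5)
Your proof is correct and is essentially the argument the paper relies on: the paper disposes of Lemma \ref{lemara} by citing the proof of \cite[Proposition 2]{doss}, which is exactly the Bohr--Fourier coefficient computation you perform (the averaging kernel $\frac{1}{k}\sum_{j=0}^{k-1}e^{ij\langle\lambda,a\rangle}$ kills every coefficient with $\langle\lambda,a\rangle\notin 2\pi{\mathbb Z}$, and uniqueness of the Bohr--Fourier series finishes), the only genuinely multi-dimensional ingredient being the choice of $a$ with \emph{all} coordinates nonzero and $\langle\lambda,a\rangle=2\pi$, which you construct correctly. You also correctly interpret the summand $F({\bf t}+(k-1)a)$ as the evident typo for $F({\bf t}+ja)$, under which reading the lemma is the intended nontrivial statement rather than a triviality.
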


Although it could be of some importance, we will not discuss here the question whether the statement of Lemma \ref{lemara} can be extended to the almost automorphic functions (the uniformly recurrent functions).

We also need the following notion. Suppose that $\emptyset  \neq \Lambda' \subseteq {\mathbb R}^{n},$ 
$\emptyset \neq  \Lambda \subseteq {\mathbb R}^{n}$,
$F : \Lambda \times X \rightarrow Y$ is a continuous function and $\Lambda +\Lambda' \subseteq \Lambda.$ Then we say that
$F(\cdot ; \cdot)$ is Bohr $({\mathcal B},\Lambda')$-almost periodic (Bohr ${\mathcal B}$-almost periodic, if $\Lambda'=\Lambda$) if and only if for every $\epsilon>0$ and $B\in {\mathcal B}$
there exists $l>0$ such that for each ${\bf t}_{0} \in \Lambda'$ there exists ${\bf \tau} \in B({\bf t}_{0},l) \cap \Lambda'$ with
\begin{align*}
\bigl\|F({\bf t}+{\bf \tau};x)-F({\bf t};x)\bigr\|_{Y} \leq \epsilon,\quad {\bf t}\in \Lambda,\ x\in B.
\end{align*}
Furthermore, we say that the function $F(\cdot ; \cdot)$ is  $({\mathcal B},\Lambda')$-uniformly recurrent (${\mathcal B}$-uniformly recurrent, if $\Lambda'=\Lambda$) if and only if, for every $B\in {\mathcal B},$
there exists a sequence $({\bf \tau}_{n})$ in $\Lambda'$ such that $\lim_{k\rightarrow +\infty} |{\bf \tau}_{k}|=+\infty$ and
$$
\lim_{k\rightarrow +\infty}\sup_{{\bf t}\in \Lambda , x\in B} \bigl\|F({\bf t}+{\bf \tau}_{k};x)-F({\bf t};x)\bigr\|_{Y} =0.
$$
If we consider the functions of the form $F : \Lambda \rightarrow Y,$ then we omit the term ``${\mathcal B}$'' from the notation.

For further information concerning multi-dimensional almost periodic type functions, multi-dimensional almost automorphic type functions and their applications, we refer the reader to our newly published research monograph \cite{nova-selected}. We have been forced to quote this monograph multiple times henceforth since some recent results of ours concerning multi-dimensional almost periodic type functions and their generalizations are still not accepted or published in the final form.

\subsection{Lebesgue spaces with variable exponents
$L^{p(x)}$}\label{karambita}

Let $\emptyset \neq \Omega \subseteq {\mathbb R}^{n}$ be a nonempty Lebesgue measurable subset and let
$M(\Omega  : X)$ denote the collection of all measurable functions $f: \Omega \rightarrow X;$ $M(\Omega):=M(\Omega : {\mathbb R}).$ Further on, by ${\mathcal P}(\Omega)$ we denote the vector space of all Lebesgue measurable functions $p : \Omega \rightarrow [1,\infty].$
For any $p\in {\mathcal P}(\Omega)$ and $f\in M(\Omega : X),$ we set
$$
\varphi_{p(x)}(t):=\left\{
\begin{array}{l}
t^{p(x)},\quad t\geq 0,\ \ 1\leq p(x)<\infty,\\ \\
0,\quad 0\leq t\leq 1,\ \ p(x)=\infty,\\ \\
\infty,\quad t>1,\ \ p(x)=\infty
\end{array}
\right.
$$
and
$$
\rho(f):=\int_{\Omega}\varphi_{p(x)}(\|f(x)\|)\, dx .
$$
We define the Lebesgue space
$L^{p(x)}(\Omega : X)$ with variable exponent
by
$$
L^{p(x)}(\Omega : X):=\Bigl\{f\in M(\Omega : X): \lim_{\lambda \rightarrow 0+}\rho(\lambda f)=0\Bigr\}.
$$
It is well known that
\begin{align*}
L^{p(x)}(\Omega : X)=\Bigl\{f\in M(\Omega : X):  \mbox{ there exists }\lambda>0\mbox{ such that }\rho(\lambda f)<\infty\Bigr\};
\end{align*}
see, e.g., \cite[p. 73]{variable}.
For every $u\in L^{p(x)}(\Omega : X),$ we introduce the Luxemburg norm of $u(\cdot)$ by
$$
\|u\|_{p(x)}:=\|u\|_{L^{p(x)}(\Omega :X)}:=\inf\Bigl\{ \lambda>0 : \rho(u/\lambda)    \leq 1\Bigr\}.
$$
Equipped with this norm, $
L^{p(x)}(\Omega : X)$ becomes a Banach space (see e.g. \cite[Theorem 3.2.7]{variable} for the scalar-valued case), coinciding with the usual Lebesgue space $L^{p}(\Omega : X)$ in the case that $p(x)=p\geq 1$ is a constant function.
Further on, for any $p\in M(\Omega),$ we define
$$
p^{-}:=\text{essinf}_{x\in \Omega}p(x) \ \ \mbox{ and } \ \ p^{+}:=\text{esssup}_{x\in \Omega}p(x).
$$
Set
$$
D_{+}(\Omega ):=\bigl\{ p\in M(\Omega): 1 \leq p^{-}\leq p(x) \leq p^{+} <\infty \mbox{ for a.e. }x\in \Omega \bigr \}.
$$
If $p\in D_{+}(\Omega),$ then we know
$$
L^{p(x)}(\Omega : X)=\Bigl\{f\in M(\Omega : X)  \, ; \,  \mbox{ for all }\lambda>0\mbox{ we have }\rho(\lambda f)<\infty\Bigr\}.
$$

We will use the following lemma (cf. \cite{variable} for the scalar-valued case):

\begin{lem}\label{aux}
\begin{itemize}
\item[(i)] (The H\"older inequality) Let $p,\ q,\ r \in {\mathcal P}(\Omega)$ such that
$$
\frac{1}{q(x)}=\frac{1}{p(x)}+\frac{1}{r(x)},\quad x\in \Omega .
$$
Then, for every $u\in L^{p(x)}(\Omega : X)$ and $v\in L^{r(x)}(\Omega),$ we have $uv\in L^{q(x)}(\Omega : X)$
and
\begin{align*}
\|uv\|_{q(x)}\leq 2 \|u\|_{p(x)}\|v\|_{r(x)}.
\end{align*}
\item[(ii)] Let $\Omega $ be of a finite Lebesgue's measure and let $p,\ q \in {\mathcal P}(\Omega)$ such $q\leq p$ a.e. on $\Omega.$ Then
 $L^{p(x)}(\Omega : X)$ is continuously embedded in $L^{q(x)}(\Omega : X),$ and the constant of embedding is less than or equal to
$2(1+m(\Omega)).$
\item[(iii)] Let $f\in L^{p(x)}(\Omega : X),$ $g\in M(\Omega : X)$ and $0\leq \|g\| \leq \|f\|$ a.e. on $\Omega .$ Then $g\in L^{p(x)}(\Omega : X)$ and $\|g\|_{p(x)}\leq \|f\|_{p(x)}.$
\end{itemize}
\end{lem}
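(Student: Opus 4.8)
The plan is to reduce every assertion to the scalar-valued theory of \cite{variable} through a single observation: for any $f \in M(\Omega : X)$ the modular $\rho$ depends on $f$ only through the scalar function $x \mapsto \|f(x)\|$. Indeed, writing $\tilde{f}(x) := \|f(x)\|$, a nonnegative element of $M(\Omega)$, we have for every $\lambda > 0$
$$
\rho(\lambda f) = \int_{\Omega} \varphi_{p(x)}\bigl(\lambda \|f(x)\|\bigr)\, dx = \int_{\Omega} \varphi_{p(x)}\bigl(\lambda \tilde{f}(x)\bigr)\, dx = \rho(\lambda \tilde{f}).
$$
Hence $f \in L^{p(x)}(\Omega : X)$ if and only if $\tilde{f} \in L^{p(x)}(\Omega)$, and comparing the two infima in the definition of the Luxemburg norm gives $\|f\|_{L^{p(x)}(\Omega : X)} = \|\tilde{f}\|_{L^{p(x)}(\Omega)}$. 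With this identity in hand, all three statements become consequences of their scalar counterparts.

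For (iii) I would argue directly from the modular. Since each $\varphi_{p(x)}(\cdot)$ is nondecreasing on $[0,\infty)$ and $0 \le \|g(x)\| \le \|f(x)\|$ for a.e.\ $x$, we obtain $\varphi_{p(x)}(\|g(x)\|/\lambda) \le \varphi_{p(x)}(\|f(x)\|/\lambda)$ for every $\lambda > 0$ and a.e.\ $x$; integrating yields $\rho(g/\lambda) \le \rho(f/\lambda)$. In particular $g \in L^{p(x)}(\Omega : X)$, and since $\rho(f/\lambda) \le 1$ forces $\rho(g/\lambda) \le 1$, the set of admissible $\lambda$ for $g$ contains that for $f$; passing to the infimum gives $\|g\|_{p(x)} \le \|f\|_{p(x)}$.

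For (i) I would apply the reduction to $u$. Pointwise $\|u(x)v(x)\| = \|u(x)\|\,|v(x)|$, so that $\widetilde{uv} = \tilde{u}\cdot |v|$ as scalar functions. The scalar H\"older inequality of \cite{variable}, applied to $\tilde{u} \in L^{p(x)}(\Omega)$ and $v \in L^{r(x)}(\Omega)$, shows that $\tilde{u}\,|v| \in L^{q(x)}(\Omega)$ with $\|\tilde{u}\,|v|\|_{q(x)} \le 2\|\tilde{u}\|_{p(x)}\|v\|_{r(x)}$; translating back through the norm identity of the first paragraph gives $uv \in L^{q(x)}(\Omega : X)$ and the stated bound.

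Finally, (ii) follows from (i) with $v \equiv 1$. Define $s \in {\mathcal P}(\Omega)$ by $1/s(x) := 1/q(x) - 1/p(x)$, which is well defined and nonnegative because $q \le p$ a.e.\ (with $s(x) = \infty$ where $q(x) = p(x)$). Then $1/q(x) = 1/p(x) + 1/s(x)$, and (i) yields $\|f\|_{q(x)} \le 2\|f\|_{p(x)}\|1\|_{s(x)}$; it remains to estimate the norm of the constant function $1$ on the finite-measure set $\Omega$. For $\lambda \ge \max\{1, m(\Omega)\}$ one checks that $\varphi_{s(x)}(1/\lambda) \le 1/\lambda$ for a.e.\ $x$ (this is $0$ when $s(x) = \infty$, since then $1/\lambda \le 1$), whence $\rho(1/\lambda) \le m(\Omega)/\lambda \le 1$; thus $\|1\|_{s(x)} \le \max\{1, m(\Omega)\} \le 1 + m(\Omega)$, and the embedding constant $2(1 + m(\Omega))$ follows. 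The only points demanding genuine care are the borderline case $s(x) = \infty$ in this last computation and the bookkeeping that the scalar norm identity survives the limiting definitions of the space; neither is a real obstacle, which reflects the fact that the lemma is essentially a vector-valued transcription of the known scalar results.
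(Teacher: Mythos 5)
Your proof is correct. The paper itself offers no proof of this lemma---it is stated with a citation to the scalar-valued theory of \cite{variable}---and your reduction via the identity $\rho(\lambda f)=\rho(\lambda\,\|f(\cdot)\|)$, which makes the Luxemburg norm of a vector-valued function equal to that of the scalar function $x\mapsto\|f(x)\|$ and thereby transfers all three statements (including the constant $2$ in the H\"older inequality and the constant $2(1+m(\Omega))$ in the embedding) to the known scalar case, is exactly the standard argument that citation presupposes.
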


For further information concerning the Lebesgue spaces with variable exponents
$L^{p(x)},$ we refer the reader to the monograph \cite{variable} by L. Diening et al., and the list of references quoted therein.

\section{Multi-dimensional Besicovitch almost periodic type functions}\label{anatolia}

Suppose that
$\Lambda$ is a general non-empty subset of
${\mathbb R}^{n}$ as well as that $p\in {\mathcal P}(\Lambda)$, the function $\phi : [0,\infty) \rightarrow [0,\infty)$ is Lebesgue measurable and ${\rm F} : (0,\infty) \rightarrow (0,\infty).$
Set
$$
\Lambda'':=\bigl\{ \tau \in {\mathbb R}^{n} : \tau +\Lambda \subseteq \Lambda \bigr\}.
$$
Unless stated otherwise, we assume henceforth that 
$\emptyset \neq \Omega \subseteq {\mathbb R}^{n}$ is a compact set with positive Lebesgue measure, as well as that $\Lambda +l\Omega \subseteq \Lambda$ for all $l>0,$ 
and
$\emptyset \neq \Lambda' \subseteq \Lambda'',$ i.e.,
$\Lambda +\Lambda' \subseteq \Lambda .$
In this section, we investigate the multi-dimensional Besicovitch almost periodic type functions, paying a special attention to 
the class $e-({\mathcal B},\phi,{\rm F})-B^{p(\cdot)}(\Lambda \times X : Y)$ and the class of Besicovitch$-({\mathrm R}, {\mathcal B},\phi,{\rm F})-B^{p(\cdot)}$-normal functions.

Recall, a trigonometric polynomial $P: \Lambda \times X\rightarrow Y$ is any linear combination of functions like $({\bf t};x)\mapsto e^{i\langle \lambda, {\bf t}\rangle}c(x),$ where $c: X\rightarrow Y$ is a continuous function; a continuous function $F: \Lambda \times X\rightarrow Y$ is said to be strongly ${\mathcal B}$-almost periodic if and only if for every $B\in {\mathcal B}$ we can find a sequence $(P_{k}^{B}(\cdot;\cdot))_{k\in {\mathbb N}}$ of trigonometric polynomials which converges to $F(\cdot;\cdot)$, uniformly on $\Lambda \times B.$ We omit the term ``${\mathcal B}$'' from the notation if $X=\{0\}.$

We are ready to
introduce the following notion:

\begin{defn}\label{ebe}
Suppose that $F: \Lambda \times X \rightarrow Y,$ $\phi : [0,\infty) \rightarrow [0,\infty)$ and ${\rm F} : (0,\infty) \rightarrow (0,\infty).$ Then we say that the function $F(\cdot;\cdot)$ belongs to the class $e-({\mathcal B},\phi,{\rm F})-B^{p(\cdot)}(\Lambda \times X : Y)$ if and only if for each set $B\in {\mathcal B}$ there exists a sequence $(P_{k}(\cdot;\cdot))$ of trigonometric polynomials such that
\begin{align}\label{jew}
\lim_{k\rightarrow +\infty}\limsup_{t\rightarrow +\infty}{\rm F}(t)\sup_{x\in B}\Bigl[\phi\Bigl( \bigl\|F({\bf t};x)-P_{k}({\bf t};x)\bigr\|_{Y}\Bigr) \Bigr]_{L^{p({\bf t})}(\Lambda_{t} )}=0.
\end{align}
If $\phi(x)\equiv x,$ then we omit the term ``$\phi$'' from the notation; if $X=\{0\},$ then we omit the term ``${\mathcal B}$'' from the notation.
\end{defn}

Immediately from definition, it follows that, for every $F\in e-({\mathcal B},{\rm F})-B^{p(\cdot)}(\Lambda \times X : Y)$ and $\lambda \in {\mathbb R}^{n},$ we have $e^{i \langle \lambda , \cdot \rangle}F\in e-({\mathcal B},{\rm F})-B^{p(\cdot)}(\Lambda \times X : Y).$
The Weyl class $e-{\mathcal B}-W^{p}_{\Omega}(\Lambda \times X : Y),$ introduced in \cite[Definition 6.3.18]{nova-selected} with $p(\cdot)\equiv p\in [1,\infty)$ and ${\rm F}(t)\equiv t^{-n/p},$ makes a subclass of the class $e-({\mathcal B},{\rm F})-B^{p(\cdot)}(\Lambda \times X : Y),$ provided some reasonable choices of compact set $\Omega;$ for example, we have that $e-{\mathcal B}-W^{p}_{\Omega}(\Lambda \times X : Y)\subseteq e-({\mathcal B},{\rm F})-B^{p(\cdot)}(\Lambda \times X : Y)$ if $\Omega=[0,1]^{n},$ and $\Lambda =[0,\infty)^{n}$ or $\Lambda={\mathbb R}^{n}.$ 

Moreover, we have the following:
\begin{itemize}
\item[(i)] Equipped with the usual operations, the set $e-({\mathcal B},\phi,{\rm F})-B^{p(\cdot)}(\Lambda \times X : Y)$ forms a vector space provided that the function $\phi(\cdot)$ is monotonically increasing and there exists a finite real constant $c>0$ such that $\phi(x+y)\leq c[\phi(x)+\phi(y)]$ for all $x,\ y\geq 0.$
\item[(ii)] For every $\tau \in \Lambda'',$ $x_{0}\in X$ and $F\in e-({\mathcal B},{\rm F})-B^{p(\cdot)}(\Lambda \times X : Y)$, we have $F(\cdot +\tau;\cdot+x_{0})\in e-({\mathcal B}_{x_{0}},{\rm F})-B^{p(\cdot)}(\Lambda \times X : Y)$ with ${\mathcal B}_{x_{0}}\equiv \{-x_{0}+B : B\in {\mathcal B}\},$ provided that $p(\cdot)\equiv p\in [1,\infty)$ and there exist
two finite real constants $c_{\tau}>0$ and $t_{\tau}>0$ such that ${\rm F}(t)\leq c_{\tau}{\rm F}(t+|\tau|),$ $t\geq t_{\tau}.$
\item[(iii)] Suppose that the function $\phi(\cdot)$ is monotonically increasing, continuous at the point zero, there exists a finite real constant $c>0$ such that $\phi(x+y)\leq c[\phi(x)+\phi(y)]$ for all $x,\ y\geq 0,$ and the mapping $t\mapsto {\rm F}(t)[1]_{L^{p(\cdot)}(\Lambda_{t})},$ $t>0$ is bounded at plus infinity. Then $F\in e-({\mathcal B},\phi,{\rm F})-B^{p(\cdot)}(\Lambda \times X : Y)$ if and only if for each set $B\in {\mathcal B}$ there exists a sequence $(F_{k}(\cdot;\cdot))$ of strongly ${\mathcal B}$-almost periodic functions such that \eqref{jew} holds with the polynomial $P_{k}(\cdot;\cdot)$ replaced therein with the function $F_{k}(\cdot;\cdot).$
\item[(iv)] Let the assumptions of (iii) hold and let there exist a function $\varphi : [0,\infty) \rightarrow [0,\infty)$ such that $\phi(xy)\leq \varphi(x)\phi(y)$ for all $x,\ y\geq 0.$ Suppose that $h : Y\rightarrow Z$ is a Lipschitz continuous function and $F\in e-(\phi,{\rm F})-B^{p(\cdot)}(\Lambda : Y).$ Using \cite[Proposition 6.1.11]{nova-selected} and the fact that any uniformly continuous Bohr almost periodic function $F: \Lambda \rightarrow Y,$ where $\Lambda$ is a convex polyhedral in ${\mathbb R}^{n},$ is strongly almost periodic, we can prove that $h\circ F \in e-(\phi,{\rm F})-B^{p(\cdot)}(\Lambda : Z).$  
\end{itemize}

Concerning the notion introduced in Definition \ref{ebe}, it is clear that the use of constant coefficients $p(\cdot) \equiv p\in [1,\infty)$ is unquestionably the
best. On the other hand, in the introductory part of \cite{nova-selected}, we have  emphasized that,  from the theoretical
point of view, the use of constant coefficients is not adequately enough because many structural results from the theory of generalized almost periodic functions can be further extended using some results from the theory of Lebesgue spaces with variable exponents $L^{p(x)}.$ Further on, it is clear that Definition \ref{ebe} covers some cases that can be freely called patological; for example, case in which $p\notin D_{+}(\Lambda)$ can be considered:

\begin{example}\label{fat}
Suppose that $\Lambda=[0,\infty),$ and $F : \Lambda \rightarrow {\mathbb R}$ is given by $F(t):=1,$ if there exists $j\in {\mathbb N}\setminus \{1\}$ such that $t\in [j^{2}-1,j^{2}],$ and
$F(t):=0,$ otherwise. Let $p(x)\equiv 1+x^{2},$ $\phi(x)\equiv x$ and
$$
\limsup_{t\rightarrow +\infty}{\rm F}(t) \cdot \inf\Biggl\{ \lambda>0 : \sum_{2\leq j\leq \sqrt{t}} \lambda^{-j^{4}+2j^{2}-2}\leq 1 \Biggr\}=0.
$$   
Then a simple computation with the Luxemburg norm shows that \eqref{jew} is satisfied with the trivial sequence $(P_{k}\equiv 0)$ of trigonometric polynomials, so that $F\in 
PAP_{0,p}(\Lambda,{\rm F},\phi)\subseteq
e-(\phi,{\rm F})-B^{p(\cdot)}(\Lambda  : {\mathbb C}).$
\end{example}

Let $\rho$ be a binary relation on $Y.$ For the sequel, we
need the following notion: 

\begin{defn}\label{prespansko} (see \cite[Definition 1]{doss-rn})
\begin{itemize}
\item[(i)] Suppose that the function $F : \Lambda \times X \rightarrow Y$ satisfies that $\phi(\| F(\cdot;x)\|_{Y})\in L^{p(\cdot)}(\Lambda_{t})$ for all $t>0$ and $x\in X.$ Then we say that the function $F(\cdot;\cdot)$ is Besicovitch-$(p,\phi,{\rm F},{\mathcal B})$-bounded if and only if, for every $B\in {\mathcal B},$ there exists a finite real number $M_{B}>0$ such that
\begin{align*}
\limsup_{t\rightarrow +\infty}{\rm F}(t)\sup_{x\in B}\Bigl[\phi \bigl(\| F(\cdot ;x)\|_{Y}\bigr)\Bigr]_{L^{p(\cdot)}(\Lambda_{t})} \leq M_{B}.
\end{align*}
\item[(ii)] Suppose that the function $F : \Lambda \times X \rightarrow Y$ satisfies that $\phi(\| F(\cdot +\tau ;x)-y_{\cdot;x}\|_{Y})\in L^{p(\cdot)}(\Lambda_{t})$ for all $t>0,$ $x\in X,$ $\tau \in \Lambda'$
and $y_{\cdot;x}\in \rho(F(\cdot;x)).$
\begin{itemize}
\item[(a)] We say that the function $F : \Lambda \times X \rightarrow Y$ is Besicovitch-$(p,\phi,{\rm F},{\mathcal B},\Lambda',\rho)$-continuous if and only if, for every $B\in {\mathcal B}$ as well as for every $t>0,$ $x\in B$ and $\cdot \in \Lambda_{t},$ we have the existence of an element $y_{\cdot;x}\in \rho (F(\cdot;x))$ such that
\begin{align*}
\lim_{\tau \rightarrow 0,\tau \in \Lambda'}\limsup_{t\rightarrow +\infty}{\rm F}(t)\sup_{x\in B}\Bigl[\phi \bigl(\| F(\cdot +\tau ;x)-y_{\cdot;x}\|_{Y}\bigr)\Bigr]_{L^{p(\cdot)}(\Lambda_{t})}=0.
\end{align*}
\item[(b)] We say that the function $F(\cdot;\cdot)$ is Doss-$(p,\phi,{\rm F},{\mathcal B},\Lambda',\rho)$-almost periodic if and only if,
for every $B\in {\mathcal B}$ and $\epsilon>0,$ there exists $l>0$ such that for each ${\bf t}_{0}\in \Lambda'$ there exists a point $\tau \in B({\bf t}_{0},l) \cap \Lambda'$ such that, for every $t>0,$ $x\in B$ and $\cdot \in \Lambda_{t},$ we have the existence of an element $y_{\cdot;x}\in \rho (F(\cdot;x))$ such that
\begin{align*}
\limsup_{t\rightarrow +\infty}{\rm F}(t)\sup_{x\in B}\Bigl[\phi \bigl(\| F(\cdot +\tau ;x)-y_{\cdot;x}\|_{Y}\bigr)\Bigr]_{L^{p(\cdot)}(\Lambda_{t})}<\epsilon.
\end{align*}
\item[(c)] We say that the function $F(\cdot;\cdot)$ is  Doss-$(p,\phi,{\rm F},{\mathcal B},\Lambda',\rho)$-uniformly recurrent if and only if,
for every $B\in {\mathcal B},$ there exists a sequence $(\tau_{k})\in \Lambda'$ such that, for every $t>0,$ $x\in B$ and $\cdot \in \Lambda_{t},$ we have the existence of an element $y_{\cdot;x}\in \rho (F(\cdot;x))$ such that
\begin{align*}
\lim_{k\rightarrow +\infty}\limsup_{t\rightarrow +\infty}{\rm F}(t)\sup_{x\in B}\Bigl[\phi \bigl(\| F(\cdot +\tau_{k} ;x)-y_{\cdot;x}\|_{Y}\bigr)\Bigr]_{L^{p(\cdot)}(\Lambda_{t})}=0.
\end{align*}
\end{itemize}
\end{itemize}
\end{defn}

As before, we omit the term ``${\mathcal B}$'' if $X=\{0\},$ the term ``$\Lambda'$'' if $\Lambda'=\Lambda,$ and the term ``$\rho$'' if $\rho ={\mathrm I}.$
The usual notion of Besicovitch-$p$-almost periodicity (Doss-$p$-almost periodicity) for the function $F : \Lambda \rightarrow Y,$ where $1\leq p<+\infty,$ is obtained by plugging $\phi(x)\equiv x$ and ${\rm F}(t)\equiv t^{-n/p}$ ($\phi(x)\equiv x$, ${\rm F}(t)\equiv t^{-n/p}$ and   $\Lambda'=\Lambda,$ $\rho ={\mathrm I}$). Further on, we say that a function $F: \Lambda \rightarrow Y$ is Besicovitch almost periodic (Doss almost periodic) if and only if $F(\cdot)$ is Besicovitch-$1$-almost periodic (Doss-$1$-almost periodic).
Let us recall that, in the usual setting, a Doss almost periodic function $f : {\mathbb R}\rightarrow {\mathbb C}$ is not generally Besicovitch almost periodic; for example, A. N. Dabboucy and H. W. Davies have constructed an example of such a function which has the mean value equal to zero (cf. \cite[pp. 352-354]{cifre} for more details). 

Now we will state and prove the following result:

\begin{prop}\label{anat}
Suppose that ${\mathcal B}$ consists of bounded subsets of $X$, $F : \Lambda \times X \rightarrow Y$ and, for every fixed element $x\in X,$ the function $F(\cdot;x)$ is Lebesgue measurable. 
\begin{itemize}
\item[(i)]
Suppose that the function $\phi(\cdot)$ is monotonically increasing. If there exists a finite real constant $t_{0}>0$ such that ${\rm F}(t)\leq [\|1\|_{L^{p(\cdot)}(\Lambda_{t})}]^{-1},$ $t\geq t_{0},$ then any function $F\in e-({\mathcal B},\phi,{\rm F})-B^{p(\cdot)}(\Lambda \times X : Y)$ is Besicovitch-$(p,\phi,{\rm F},{\mathcal B})$-bounded. 
\item[(ii)] Suppose that 
$\phi(\cdot)$ is monotonically increasing and continuous at the point $t=0$.
Let $p(\cdot)\equiv p\in [1,\infty)$, and let there exist finite real constants $c>0$ and $t_{0}>0$ such that, for every $t\geq t_{0},$ we have ${\rm F}(t+1)\geq c{\rm F}(t)$ and ${\rm F}(t)\leq [m(\Lambda_{t})]^{-(1/p)}.$ Then any function $F\in e-({\mathcal B},\phi,{\rm F})-B^{p}(\Lambda \times X : Y)$ is Besicovitch-$(p,\phi,{\rm F},{\mathcal B},\Lambda',{\rm I})$-continuous for any set $\Lambda' \subseteq \Lambda''. $
\item[(iii)] Suppose that $p(\cdot)\equiv p\in [1,\infty),$ $\phi(\cdot)$ has the same properties as in \emph{(ii)}, as well as that for every real number $a>0$ there exist finite real constants $c_{a}>0$ and $t_{a}>0$ such that, for every $t\geq t_{a},$ we have ${\rm F}(t+a)\geq c_{a}{\rm F}(t)$ and ${\rm F}(t)\leq [m(\Lambda_{t})]^{-(1/p)}.$
Let
$F\in e-({\mathcal B},\phi,{\rm F})-B^{p}(\Lambda \times X : Y).$
Then the following holds:
\begin{itemize}
\item[(a)]
The function $F(\cdot;\cdot)$ is Doss-$(p,\phi,{\rm F},{\mathcal B},\Lambda,{\rm I})$-almost periodic, provided that $\Lambda+\Lambda \subseteq \Lambda$ and, for every points $( t_{1},..., t_{n} )\in  \Lambda$ and $( \tau_{1},..., \tau_{n} )\in  \Lambda,$ 
the points $( t_{1},t_{2}+\tau_{2},..., t_{n}+\tau_{n} ),$ $( t_{1},t_{2},t_{3}+\tau_{3},..., t_{n}+\tau_{n} ),...,$ $( t_{1},t_{2},..., t_{n-1}, t_{n}+\tau_{n} ),$ also belong to $ \Lambda .$
\item[(b)] The function $F(\cdot;\cdot)$ is Doss-$(p,\phi,{\rm F},{\mathcal B},\Lambda \cap \Delta_{n},{\rm I})$-almost periodic, provided that 
$\Lambda \cap \Delta_{n}\neq \emptyset,$ $\Lambda+(\Lambda\cap \Delta_{n})\subseteq \Lambda$ and that, for every points $( t_{1},..., t_{n} )\in \Lambda$ and $( \tau,..., \tau )\in \Lambda\cap \Delta_{n},$ 
the points $( t_{1},t_{2}+\tau,..., t_{n}+\tau ),$ $( t_{1},t_{2},t_{3}+\tau,..., t_{n}+\tau ),... ,$ $( t_{1},t_{2},..., t_{n-1}, t_{n}+\tau ),$ also belong to $\Lambda\cap \Delta_{n}.$
\end{itemize}
\end{itemize}
\end{prop}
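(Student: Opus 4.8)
The plan is to derive all three assertions from the defining approximation in Definition \ref{ebe} by means of a triangle inequality, reducing each statement to (a) an elementary property of the approximating trigonometric polynomials $P_{k}$ and (b) a translation estimate for the weighted seminorm ${\rm F}(t)\,[\phi(\|\cdot\|_{Y})]_{L^{p(\cdot)}(\Lambda_{t})}$. Throughout I fix $B\in{\mathcal B}$ and the sequence $(P_{k})$ attached to it; since each $P_{k}$ is a finite sum of terms $e^{i\langle\lambda,{\bf t}\rangle}c(x)$ with $|e^{i\langle\lambda,{\bf t}\rangle}|=1$ and ${\mathcal B}$ consists of bounded sets, the quantity $C_{k}:=\sup_{{\bf t}\in\Lambda,\,x\in B}\|P_{k}({\bf t};x)\|_{Y}$ is finite for every $k$.

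For (i) I would start from $\|F({\bf t};x)\|_{Y}\le\|F({\bf t};x)-P_{k}({\bf t};x)\|_{Y}+C_{k}$ and, using monotonicity of $\phi$ together with a splitting of $\Lambda_{t}$ according to whether the approximation error exceeds $C_{k}$, bound $[\phi(\|F(\cdot;x)\|_{Y})]_{L^{p(\cdot)}(\Lambda_{t})}$ by a sum of $[\phi(2\|F-P_{k}\|_{Y})]_{L^{p(\cdot)}(\Lambda_{t})}$ and $\phi(2C_{k})\,\|1\|_{L^{p(\cdot)}(\Lambda_{t})}$; here I use Lemma \ref{aux}(iii) and the subadditivity of the Luxemburg norm. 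Multiplying by ${\rm F}(t)$ and invoking the hypothesis ${\rm F}(t)\,\|1\|_{L^{p(\cdot)}(\Lambda_{t})}\le1$ for $t\ge t_{0}$ turns the second summand into the constant $\phi(2C_{k})$, while the first summand stays finite for any fixed $k$ by the $k$-th approximation bound (this is immediate for the flagged case $\phi(x)\equiv x^{\alpha}$, where $\phi(2x)$ is a constant multiple of $\phi(x)$). Taking $\limsup_{t\to+\infty}$ for one fixed $k$ then produces the required finite constant $M_{B}$.

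For (ii) I would insert $P_{k}$ twice and use the decomposition
\[
\|F(\cdot+\tau;x)-F(\cdot;x)\|_{Y}\le\|F(\cdot+\tau;x)-P_{k}(\cdot+\tau;x)\|_{Y}+\|P_{k}(\cdot+\tau;x)-P_{k}(\cdot;x)\|_{Y}+\|P_{k}(\cdot;x)-F(\cdot;x)\|_{Y}.
\]
The middle term is controlled by writing $P_{k}(\cdot+\tau;x)-P_{k}(\cdot;x)=\sum_{j}(e^{i\langle\lambda_{j},\tau\rangle}-1)e^{i\langle\lambda_{j},\cdot\rangle}c_{j}(x)$, whose $Y$-norm is at most $C_{k}\max_{j}|e^{i\langle\lambda_{j},\tau\rangle}-1|\to0$ as $\tau\to0$, uniformly in ${\bf t}$ and $x\in B$; continuity of $\phi$ at $0$ and the bound ${\rm F}(t)\,m(\Lambda_{t})^{1/p}\le1$ then make its contribution vanish as $\tau\to0$. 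For the two outer (error) terms I would change variables ${\bf s}={\bf t}+\tau$, note $\tau+\Lambda_{t}\subseteq\Lambda_{t+|\tau|}$ (legitimate since $\tau\in\Lambda'\subseteq\Lambda''$), and use monotonicity of the norm in the domain together with the substitution $t\mapsto t-|\tau|$ and ${\rm F}(t+1)\ge c\,{\rm F}(t)$ (applied finitely many times, as $|\tau|\le1$ eventually) to dominate each by a fixed multiple of the unshifted Besicovitch error. Choosing $k$ large and then letting $\tau\to0$ gives the Besicovitch-$(p,\phi,{\rm F},{\mathcal B},\Lambda',{\rm I})$-continuity.

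For (iii) the same three-term split is used, but now $\tau$ is an $\epsilon$-almost period of the Bohr almost periodic polynomial $P_{k}$: for each $\epsilon>0$ there is $l>0$ so that every ball $B({\bf t}_{0},l)$ contains such a $\tau$, which bounds the middle term uniformly by a $\phi(\epsilon)$-type quantity. The outer terms are again handled by the translation estimate of (ii), this time invoking ${\rm F}(t+a)\ge c_{a}{\rm F}(t)$ to absorb the now possibly large shift. The hard part is exactly the multi-dimensional translation of the Besicovitch error confined to $\Lambda'$: the change of variables must be performed one coordinate at a time — first shifting the last coordinate, then the last two, and so on — and the geometric hypotheses are precisely what guarantees that each intermediate partially shifted point stays in $\Lambda$ (case (a), where $\Lambda'=\Lambda$) or in the diagonal subdomain $\Lambda\cap\Delta_{n}$ (case (b), where $\tau\in\Delta_{n}$ forces the almost periods onto the diagonal, so one restricts $P_{k}$ to $\Delta_{n}$ and uses one-variable Bohr almost periodicity there). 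I expect this coordinate-wise domain bookkeeping, rather than any single estimate, to be the principal obstacle, since it is what fails for a general subdomain $\Lambda$ and is the reason the partial-shift conditions are imposed.
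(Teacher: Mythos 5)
Your treatment of parts (i) and (ii) follows the paper's own proof in all essentials: the same insertion of $P_{k}$, the same bound of the polynomial term by $\phi(c_{B})\,\|1\|_{L^{p(\cdot)}(\Lambda_{t})}\leq \phi(c_{B})\,{\rm F}(t)^{-1}$ for $t\geq t_{0}$, the same middle-term estimate via a quantity of the form $\phi\bigl(c_{B}\sum_{j}|e^{i\langle\lambda_{j},\tau\rangle}-1|\bigr)\to 0$ as $\tau\to 0$ (this is where continuity of $\phi$ at zero enters, exactly as in the paper), and the same absorption of the shifted error using ${\rm F}(t+1)\geq c\,{\rm F}(t)$ together with $\tau+\Lambda_{t}\subseteq\Lambda_{t+|\tau|}$. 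The only deviation in (i) is your level-set splitting, which trades the paper's tacit use of subadditivity of $\phi$ for a doubling-type condition $\phi(2x)\leq C\phi(x)$; neither property is among the stated hypotheses, so this is a wash, and you at least flag the restriction explicitly.

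The genuine gap is in (iii), and it concerns where the geometric hypotheses are actually needed. You take for granted that $P_{k}$ is ``Bohr almost periodic'' in the sense that every ball $B({\bf t}_{0},l)$ contains an $\epsilon$-almost period $\tau$; but Doss-$(p,\phi,{\rm F},{\mathcal B},\Lambda,{\rm I})$-almost periodicity requires $\tau\in B({\bf t}_{0},l)\cap\Lambda$ (and, in case (b), $\tau\in B({\bf t}_{0},l)\cap\Lambda\cap\Delta_{n}$), with ${\bf t}_{0}$ ranging over $\Lambda$ (resp. $\Lambda\cap\Delta_{n}$) and with the almost-period inequality holding on all of $\Lambda$. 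For a proper subset $\Lambda$ of ${\mathbb R}^{n}$ this relative density of almost periods of a trigonometric polynomial \emph{inside} $\Lambda$ is not automatic; it is precisely the content of the result the paper invokes (Proposition 6.1.25(iv)--(v) of the cited monograph), and it is exactly what the partial-shift conditions in (a) and (b) are there to guarantee --- this also shows why your suggestion for (b), restricting $P_{k}$ to $\Delta_{n}$ and using one-variable theory, falls short, since the estimate must hold on all of $\Lambda$, not merely on the diagonal. Conversely, the step you single out as ``the principal obstacle'' is a non-issue: the outer (error) terms require only the single global change of variables ${\bf s}={\bf t}+\tau$, which is legitimate as soon as $\tau\in\Lambda''$ (guaranteed by $\Lambda+\Lambda\subseteq\Lambda$, resp. $\Lambda+(\Lambda\cap\Delta_{n})\subseteq\Lambda$), combined with ${\rm F}(t+a)\geq c_{a}{\rm F}(t)$; no intermediate, partially shifted points ever arise there, because nothing is telescoped in those terms. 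So as written your middle-term estimate rests on an unproved claim (false in general without the partial-shift hypotheses), and to close the gap you must derive the Bohr $({\mathcal B},\Lambda)$- resp. $({\mathcal B},\Lambda\cap\Delta_{n})$-almost periodicity of trigonometric polynomials from those hypotheses, or cite it as the paper does.
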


\begin{proof}
In order to prove (i), fix a set $B\in {\mathcal B}.$ Then $B$ is bounded and we have the existence of a trigonometric polynomial $P_{k}(\cdot;\cdot)$ such that
\begin{align*}
\sup_{x\in B}&\Bigl[\phi \bigl(\| F(\cdot ;x)\|_{Y}\bigr)\Bigr]_{L^{p(\cdot)}(\Lambda_{t})}
\\& \leq \sup_{x\in B}\Bigl[\phi \bigl(\| F(\cdot ;x)-P_{k}(\cdot;x)\|_{Y}\bigr)\Bigr]_{L^{p(\cdot)}(\Lambda_{t})} +
\sup_{x\in B}\Bigl[\phi \bigl(\| P_{k}(\cdot;x)\|_{Y}\bigr)\Bigr]_{L^{p(\cdot)}(\Lambda_{t})}
\\& \leq (\epsilon/2{\rm F}(t))+\sup_{x\in B}\Bigl[\phi \bigl(\| P_{k}(\cdot;x)\|_{Y}\bigr)\Bigr]_{L^{p(\cdot)}(\Lambda_{t})}.
\end{align*}
Let $P_{k}(\cdot;x)=\sum_{j=0}^{l}e^{i\langle \lambda_{l} , \cdot \rangle}c_{l}(x)$ for some integer $l\in {\mathbb N},$ points $\lambda_{1},...,\lambda_{l}$ from ${\mathbb R}^{n}$ and continuous functions $c_{1}(\cdot),...,c_{l}(\cdot)$ from $X$ into $Y.$ Then we have the existence of a finite real constant $c_{B}>0$ such that (see also Lemma \ref{aux}(ii)):
\begin{align*}
\sup_{x\in B}&\Bigl[\phi \bigl(\| P_{k}(\cdot;x)\|_{Y}\bigr)\Bigr]_{L^{p(\cdot)}(\Lambda_{t})}\leq \sup_{x\in B}\Biggl[\phi \Biggl(\Biggl\| 
\sum_{j=0}^{l}e^{i\langle \lambda_{l} , \cdot \rangle}c_{l}(x)\Biggr\|_{Y}\Biggr)\Biggr]_{L^{p(\cdot)}(\Lambda_{t})}
\\& \leq \sup_{x\in B}\Biggl[\phi \Biggl(
\sum_{j=0}^{l}\bigl\|c_{l}(x)\bigr\|_{Y}\Biggr)\Biggr]_{L^{p(\cdot)}(\Lambda_{t})}\leq \Bigl[\phi \bigl( c_{B} \bigr)\Bigr]_{L^{p(\cdot)}(\Lambda_{t})}\leq \phi \bigl( c_{B} \bigr){\rm F}(t)^{-1},\quad t\geq t_{0}.
\end{align*}
The proof of (ii) is quite similar and follows from the decomposition:
 \begin{align*}
\sup_{x\in B}& \Bigl[\phi \bigl(\| F(\cdot +\tau ;x)-F(\cdot  ;x)\|_{Y}\bigr)\Bigr]_{L^{p(\cdot)}(\Lambda_{t})}
\\& \leq \sup_{x\in B} \Bigl[\phi \bigl(\| F(\cdot +\tau ;x)-P_{k}(\cdot +\tau ;x)\|_{Y}\bigr)\Bigr]_{L^{p(\cdot)}(\Lambda_{t})}
\\&+\sup_{x\in B} \Bigl[\phi \bigl(\| P_{k}(\cdot +\tau ;x)-P_{k}(\cdot  ;x)\|_{Y}\bigr)\Bigr]_{L^{p(\cdot)}(\Lambda_{t})}
\\&+\sup_{x\in B} \Bigl[\phi \bigl(\| P_{k}(\cdot  ;x)-F(\cdot  ;x)\|_{Y}\bigr)\Bigr]_{L^{p(\cdot)}(\Lambda_{t})};
\end{align*}
let us only note that we need the continuity of $\phi(\cdot)$ at the point $t=0$ because, in the final steps of computation, we get a term of form $\phi(c_{B}\sum_{j=0}^{l}|e^{i \langle \lambda_{j},\tau\rangle }-1|),$ which tends to zero as $\tau \rightarrow 0+.$
The proof of part (a) in  (iii) follows from a relatively simple argumentation involving the decomposition used for proving (ii), the given assumptions and the fact that the trigonometric polynomial $P_{k}(\cdot;\cdot)$ is Bohr ${\mathcal B}$-almost periodic due to \cite[Proposition 6.1.25(iv)]{nova-selected}; the proof of part (b) in (iii) is quite similar because the prescribed assumptions imply that the trigonometric polynomial $P_{k}(\cdot;\cdot)$ is  Bohr $({\mathcal B},\Lambda \cap \Delta_{n})$-almost periodic due to \cite[Proposition 6.1.25(v)]{nova-selected} (cf. also \cite[Definition 6.1.9, Definition 6.1.14]{nova-selected} for the notion).
\end{proof}

Suppose that ${\mathcal B}$ consists of bounded subsets of $X$, $\Lambda$ is unbounded, $F,\ G\in e-({\mathcal B},\phi,{\rm F})-B^{p(\cdot)}(\Lambda \times X : Y)$, for every fixed element $x\in X,$ 
the function $\phi(\cdot)$ is monotonically increasing, $\phi(x+y)\leq \phi(x)+\phi(y)$ for all $x,\ y\geq 0,$ $\limsup_{t\rightarrow +\infty}{\rm F}(t)=0,$ and
there exists a finite real constant $t_{0}>0$ such that ${\rm F}(t)\leq [\|1\|_{L^{p(\cdot)}(\Lambda_{t})}]^{-1},$ $t\geq t_{0}.$ Due to Proposition \ref{anat}(i), we have that  
the function $F(\cdot;\cdot)$ is Besicovitch-$(p,\phi,{\rm F},{\mathcal B})$-bounded. Let a set $B\in {\mathcal B}$ be fixed. Then 
$$
d_{B}(F,G):=\limsup_{t\rightarrow +\infty}{\rm F}(t)\sup_{x\in B}\Bigl[ \phi\Bigl( \bigl\| F({\cdot};x)-G({\cdot};x) \bigr\|_{Y} \Bigr)\Bigr]_{L^{p(\cdot)}(\Lambda_{t})}
$$
defines a pseudometric on the set $
e-({\mathcal B},\phi,{\rm F})-B^{p(\cdot)}(\Lambda \times X : Y).$ Using the idea from the original proof of J. Marcinkiewicz \cite{marcin}, we can prove the following theorem (see also \cite[pp. 249--252]{188}):

\begin{thm}\label{gs}
Let the requirements stated in the previous paragraph hold. Then, for every set $B\in {\mathcal B}$, the pseudometric space $(e-({\mathcal B},\phi,{\rm F})-B^{p(\cdot)}(\Lambda \times X : Y),d_{B})$ is complete.
\end{thm}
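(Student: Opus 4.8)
The plan is to run a multi-dimensional adaptation of Marcinkiewicz's gluing construction, exploiting that $d_{B}$ is already known (from the preceding paragraph) to be a finite pseudometric. Fix $B\in {\mathcal B}$ and let $(F_{m})$ be a $d_{B}$-Cauchy sequence in $e-({\mathcal B},\phi,{\rm F})-B^{p(\cdot)}(\Lambda \times X : Y)$. Since in a pseudometric space a Cauchy sequence converges as soon as one of its subsequences does, I first pass to a subsequence $(F_{m_{k}})$ with $d_{B}(F_{m_{k}},F_{m_{k+1}})\le 2^{-k}$. As each of these numbers is the value of a $\limsup_{t\rightarrow +\infty}$, I may then choose a strictly increasing sequence $T_{k}\rightarrow +\infty$ such that, for every $t\ge T_{k}$,
\[
{\rm F}(t)\sup_{x\in B}\Bigl[\phi\bigl(\|F_{m_{k}}(\cdot;x)-F_{m_{k+1}}(\cdot;x)\|_{Y}\bigr)\Bigr]_{L^{p(\cdot)}(\Lambda_{t})}\le 2^{-k}.
\]
Putting $T_{0}:=0$, I define the candidate limit $F$ by setting $F(\mathbf{t};x):=F_{m_{k}}(\mathbf{t};x)$ for $\mathbf{t}\in \Lambda_{T_{k}}\setminus \Lambda_{T_{k-1}}$, $k\ge 1$. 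This $F$ is measurable and, on every bounded set $\Lambda_{t}$, agrees with finitely many of the $F_{m_{k}}$, so $\phi(\|F(\cdot;x)\|_{Y})\in L^{p(\cdot)}(\Lambda_{t})$ for all $t>0$ and $x\in X$.

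Next I would estimate $d_{B}(F_{m_{N}},F)$ and show it tends to $0$. The triangle inequality for the Luxemburg norm applied to $\Lambda_{t}=\Lambda_{T_{N}}\cup(\Lambda_{t}\setminus \Lambda_{T_{N}})$ splits $[\phi(\|F-F_{m_{N}}\|_{Y})]_{L^{p(\cdot)}(\Lambda_{t})}$ into an inner and an outer contribution. On each annulus one has $F-F_{m_{N}}=F_{m_{k}}-F_{m_{N}}$, so telescoping together with the monotonicity and subadditivity of $\phi$ (which give $\phi(a_{1}+\cdots+a_{r})\le \phi(a_{1})+\cdots+\phi(a_{r})$) yields, on the outer region,
\[
\phi\bigl(\|F(\mathbf{t};x)-F_{m_{N}}(\mathbf{t};x)\|_{Y}\bigr)\le \sum_{j\ge N}\phi\bigl(\|F_{m_{j+1}}(\mathbf{t};x)-F_{m_{j}}(\mathbf{t};x)\|_{Y}\bigr)\mathbf{1}_{\Lambda_{t}\setminus \Lambda_{T_{j+1}}}(\mathbf{t}).
\]
The indicator forces the $j$-th summand to vanish unless $t\ge T_{j+1}$, so for each fixed $t$ only finitely many terms survive; invoking the threshold choice above and the monotonicity of the Luxemburg norm with respect to the domain (Lemma \ref{aux}(iii)), one obtains, for all sufficiently large $t$,
\[
{\rm F}(t)\sup_{x\in B}\Bigl[\phi\bigl(\|F(\cdot;x)-F_{m_{N}}(\cdot;x)\|_{Y}\bigr)\Bigr]_{L^{p(\cdot)}(\Lambda_{t}\setminus \Lambda_{T_{N}})}\le \sum_{j\ge N}2^{-j}=2^{-N+1}.
\]

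The inner part, over the fixed bounded set $\Lambda_{T_{N}}$, is where the hypothesis $\limsup_{t\rightarrow +\infty}{\rm F}(t)=0$ enters: the same telescoping bounds $[\phi(\|F-F_{m_{N}}\|_{Y})]_{L^{p(\cdot)}(\Lambda_{T_{N}})}$ by a finite constant $C_{N}$ independent of $t$ (its finiteness follows from $d_{B}(F_{m_{j}},F_{m_{j+1}})<\infty$, the positivity of ${\rm F}(\cdot)$, and Lemma \ref{aux}(iii)), whence $\limsup_{t\rightarrow +\infty}{\rm F}(t)\,C_{N}=0$. Combining the two contributions gives $d_{B}(F_{m_{N}},F)\le 2^{-N+1}\rightarrow 0$, so $F_{m_{N}}\rightarrow F$ in $d_{B}$ and therefore $F_{m}\rightarrow F$. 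Finally, to see that $F$ lies in the class, note that each $F_{m_{k}}$ does, so by Definition \ref{ebe} trigonometric polynomials are $d_{B}$-dense around it; a diagonal choice of polynomials $P_{k}$ with $d_{B}(F_{m_{N_{k}}},P_{k})+d_{B}(F_{m_{N_{k}}},F)\rightarrow 0$, together with the triangle inequality for $d_{B}$, produces a single sequence of trigonometric polynomials realizing \eqref{jew} for $F$, so that $F\in e-({\mathcal B},\phi,{\rm F})-B^{p(\cdot)}(\Lambda \times X : Y)$.

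I expect the \emph{main obstacle} to be the outer estimate: one must organize the telescoped series so that, for each fixed $t$, only the annuli with $T_{j+1}<t$ contribute and each contributes at most $2^{-j}$, which requires keeping the growing domain $\Lambda_{t}$, the decay furnished by the choice of $T_{k}$, and the supremum over $x\in B$ all under simultaneous control through the variable-exponent machinery of Lemma \ref{aux}. A secondary technical point to verify carefully is the finiteness of the inner constant $C_{N}$, i.e.\ that the $\sup_{x\in B}$ of the Luxemburg norms over the fixed set $\Lambda_{T_{N}}$ is finite, which I would deduce from the finiteness of the pseudometric $d_{B}$ (guaranteed by Proposition \ref{anat}(i) under the standing assumptions) by dividing by a positive value ${\rm F}(t')$ at a fixed large $t'$ and then applying domain-monotonicity.
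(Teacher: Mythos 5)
Your construction is, in substance, exactly the Marcinkiewicz gluing argument that the paper invokes: the paper gives no written proof of Theorem \ref{gs}, only the remark that it follows ``using the idea from the original proof of J. Marcinkiewicz'' (with a pointer to \cite{marcin} and \cite{188}), and your scheme --- fast Cauchy subsequence, thresholds $T_{k}$, gluing on the annuli $\Lambda_{T_{k}}\setminus\Lambda_{T_{k-1}}$, pointwise telescoping via the monotonicity and subadditivity of $\phi$, splitting $\Lambda_{t}$ into $\Lambda_{T_{N}}$ and $\Lambda_{t}\setminus\Lambda_{T_{N}}$, and killing the inner contribution with the standing hypothesis $\limsup_{t\rightarrow+\infty}{\rm F}(t)=0$ --- is precisely that idea, and the two main estimates are sound. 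Two small slips should be repaired: the indicator in your telescoped bound must be $\mathbf{1}_{\Lambda_{t}\setminus\Lambda_{T_{j}}}$ rather than $\mathbf{1}_{\Lambda_{t}\setminus\Lambda_{T_{j+1}}}$, since on the annulus $\Lambda_{T_{j+1}}\setminus\Lambda_{T_{j}}$ the $j$-th increment $F_{m_{j+1}}-F_{m_{j}}$ does occur (with the corrected indicator the outer estimate still closes, because a nonvanishing $j$-th term forces $t\geq T_{j}$ and hence the threshold bound applies); and since $d_{B}(F_{m_{k}},F_{m_{k+1}})\leq 2^{-k}$ is only a $\limsup$, the thresholds yield a bound like $2^{-k+1}$ rather than $2^{-k}$ --- harmless, as only the geometric decay matters.

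The one genuine gap is the final membership claim. Your diagonal argument produces a single sequence of trigonometric polynomials realizing \eqref{jew} for the \emph{fixed} set $B$ only, whereas Definition \ref{ebe} demands such a sequence for \emph{every} $B'\in{\mathcal B}$; the Cauchy hypothesis gives no control whatsoever on $d_{B'}(F_{m_{k}},F_{m_{k'}})$ for $B'\neq B$, and the glued function patches together functions that need not approximate one another on $B'$, so nothing forces $F$ to lie in the class as literally defined. Note also that $d_{B}$ is blind to the behavior of $F(\cdot;x)$ for $x\notin B$, so you cannot repair this by modifying $F$ off $B$ either, since the candidate replacements would have to be matched by polynomials $({\bf t};x)\mapsto e^{i\langle\lambda,{\bf t}\rangle}c(x)$ with $c(\cdot)$ continuous. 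This defect is invisible when ${\mathcal B}$ consists of a single set or when $X=\{0\}$ (the classical setting of \cite{marcin} and \cite{188} that the paper cites), and it is arguably a latent imprecision of the statement of Theorem \ref{gs} itself rather than of your argument; but as written, your last sentence establishes only membership ``relative to $B$''. To close the step one should either read the theorem with the class relativized to $B$, or impose the Cauchy property simultaneously with respect to all $d_{B'}$, $B'\in{\mathcal B}$, and (for countable ${\mathcal B}$) run the gluing along a diagonal choice of thresholds valid for all $B'$ at once.
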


\noindent {\bf The classes with $\phi(x)\equiv x^{\alpha}$, $\alpha>0.$}
Without any doubt, the most important case in Definition \ref{ebe} and Definition \ref{prespansko} is that one in which we have $\phi(x)\equiv x^{\alpha}$ for some real number $\alpha>0.$ If so, then all requirements necessary for applying Proposition \ref{anat} and the statements stated preceding it hold. The assumptions of Theorem \ref{gs} hold in case $\alpha \in (0,1],$ when we can provide some proper generalizations of the usual notion of Besicovitch-$P$-almost periodicity. For example, if $1\leq P<+\infty,$ $1\leq p<+\infty,$ $\alpha p\in (0,1)$ and the function $F: {\mathbb R} \rightarrow Y$ is Besicovitch-$P$-almost periodic, then the H\"older inequality implies that, for every trigonometric polynomial $P(\cdot)$, we have:
\begin{align*}
\Biggl( \frac{1}{2t}\int^{t}_{-t}\bigl\| F(t)-P(t)\bigr\|_{Y}^{\alpha p}\, dt \Biggr)^{1/p}\leq \Biggl( \frac{1}{2t}\int^{t}_{-t}\bigl\| F(t)-P(t)\bigr\|_{Y}^{P}\, dt \Biggr)^{\alpha/P},\quad t>0, 
\end{align*}
 so that $F\in e-(x^{\alpha},t^{-(1/p)})-B^{p}({\mathbb R} : Y).$ The converse statement does not hold in general, as the following illustrative example shows:

\begin{example}\label{maric0} (cf. also Example \ref{maric} below)
Let $\zeta>1/2$ and $\alpha \zeta \in (0,1/2).$
Define the function $F : {\mathbb R} \rightarrow {\mathbb R}$ by $F(t):=m^{\zeta}$ if $t\in [m^{2},m^{2}+\sqrt{|m|})$ for some $m\in {\mathbb Z},$ and $F(t):=0,$ otherwise. Then it can be simply shown that the function $F(\cdot)$ is not Besicovitch bounded and therefore not Besicovitch almost periodic. On the other hand, we have $F\in 
PAP_{0,p}({\mathbb R},t^{-1},x^{\alpha})
\subseteq
 e-(x^{\alpha},t^{-1})-B^{1}({\mathbb R} : {\mathbb C}).$
\end{example}

Let the numbers $\alpha>0$ and $ \beta>0$ be arbitrary. Using the functions $\phi(x)\equiv x^{\alpha/p}$ and ${\rm F}(t)\equiv t^{-\beta/p}$ in our approach, we can consider the generalized Besicovitch class $B_{\alpha,\beta}({\mathbb R} : Y)$ consisting of those Lebesgue measurable functions $F: {\mathbb R} \rightarrow Y$ such that, for every $\epsilon>0,$ there exist a trigonometric polynomial $P(\cdot)$ and a real number $t_{0}>0$ such that
$$
\int_{-t}^{t}\bigl\| F(s)-P(s) \bigr\|_{Y}^{\alpha}\, ds \leq \epsilon t^{\beta},\quad t\geq t_{0};
$$
a multi-dimensional generalization can be introduced analogously. Fairly complete analysis of the generalized Besicovitch class $B_{\alpha,\beta}({\mathbb R} : Y)$ and its multi-dimensional analogues is without scope of this paper (let us only observe here that the space $W_{\alpha}$, considered by M. A. Picardello  \cite{pic} in the usual one-dimensional setting with $0<\alpha \leq 1,$ is nothing else but the space $B_{1,\alpha}({\mathbb R} : {\mathbb C})$). 

We will provide the main details of the proof of the following proposition for the sake of completeness:

\begin{prop}\label{sdffds}
Suppose that $p,\ q,\ r \in [1,\infty),$ $1/r=1/p+1/q$, ${\rm F}_{1}(t)\equiv t^{-n/p}$,
${\rm F}_{2}(t)\equiv t^{-n/q},$
${\rm F}(t)\equiv t^{-n/r}$, $\phi(x)\equiv x^{\alpha}$ for some real number $\alpha>0,$ and any set $B$ of collection ${\mathcal B}$ is bounded in $X.$ If $F_{1}\in e-({\mathcal B},\phi,{\rm F}_{1})-B^{p}(\Lambda \times X : {\mathbb C})$ and $F_{2}\in e-({\mathcal B},\phi,{\rm F}_{2})-B^{q}(\Lambda \times X : Y),$ then the function $F : \Lambda \times X \rightarrow Y,$ given by $F({\bf t};x):=F_{1}({\bf t};x)F_{2}({\bf t};x),$ ${\bf t}\in \Lambda,$ $x\in X,$ belongs to the class $e-({\mathcal B},\phi,{\rm F})-B^{r}(\Lambda \times X : Y).$
\end{prop}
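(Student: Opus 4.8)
The plan is to exploit two facts: the product of two trigonometric polynomials is again a trigonometric polynomial, and the constraint $1/r=1/p+1/q$ is exactly what is needed to split the relevant integrals by Hölder's inequality. First I would fix $B\in{\mathcal B}$ and use Definition \ref{ebe} to produce sequences $(P_{k})$ and $(Q_{k})$ of trigonometric polynomials, with $P_{k}(\cdot;x)$ scalar-valued and $Q_{k}(\cdot;x)$ $Y$-valued, realizing the defining limit \eqref{jew} for $F_{1}$ (exponent $p$) and for $F_{2}$ (exponent $q$), using the same index $k$. Writing $R_{k}:=P_{k}Q_{k}$, the identity $e^{i\langle\lambda,\cdot\rangle}e^{i\langle\mu,\cdot\rangle}=e^{i\langle\lambda+\mu,\cdot\rangle}$ together with continuity of the coefficient maps shows that $R_{k}:\Lambda\times X\to Y$ is again a trigonometric polynomial, so $(R_{k})$ is an admissible approximating sequence for $F=F_{1}F_{2}$.

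The core estimate rests on the algebraic identity
\begin{align*}
F_{1}F_{2}-P_{k}Q_{k}=(F_{1}-P_{k})F_{2}+F_{1}(F_{2}-Q_{k})-(F_{1}-P_{k})(F_{2}-Q_{k}),
\end{align*}
whose three summands each carry at least one difference factor while the remaining factor is $F_{1}$ or $F_{2}$ itself, never the $k$-dependent polynomial (this is essential, since a trigonometric polynomial need not stay uniformly bounded as $k\to\infty$). To each summand I would apply Hölder's inequality with the conjugate pair $p/r,\,q/r$, which are admissible because $1/r=1/p+1/q$ forces $r/p+r/q=1$ and $r\leq p$, $r\leq q$. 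Since $\phi(x)\equiv x^{\alpha}$ and $p(\cdot)$ is constant, this yields pointwise-in-$x$ bounds such as
\begin{align*}
\Bigl(\int_{\Lambda_{t}}\bigl\|(F_{1}-P_{k})F_{2}\bigr\|_{Y}^{\alpha r}\Bigr)^{1/r}\leq\Bigl(\int_{\Lambda_{t}}|F_{1}-P_{k}|^{\alpha p}\Bigr)^{1/p}\Bigl(\int_{\Lambda_{t}}\|F_{2}\|_{Y}^{\alpha q}\Bigr)^{1/q},
\end{align*}
and analogously for the other two terms. Multiplying through by ${\rm F}(t)=t^{-n/r}=t^{-n/p}\cdot t^{-n/q}$ splits each bound into one factor built from ${\rm F}_{1}$ and one from ${\rm F}_{2}$.

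To finish I would feed in Besicovitch boundedness: under the standing choice $\phi(x)\equiv x^{\alpha}$ the hypotheses of Proposition \ref{anat}(i) hold (as noted in the discussion of the case $\phi(x)\equiv x^{\alpha}$), so the quantities $A:=\limsup_{t\to+\infty}{\rm F}_{1}(t)\sup_{x\in B}(\int_{\Lambda_{t}}|F_{1}|^{\alpha p})^{1/p}$ and its analogue $B$ for $F_{2}$ are finite. Taking $\sup_{x\in B}$ (a supremum of a product is at most the product of suprema) and then $\limsup_{t\to+\infty}$ (a $\limsup$ of a product of nonnegative functions is dominated by the product of the $\limsup$s), the three contributions become, up to a constant $C=C(\alpha,r)$, the quantities $\alpha_{k}B$, $A\beta_{k}$ and $\alpha_{k}\beta_{k}$; here $\alpha_{k},\beta_{k}\to 0$ as $k\to\infty$ are the approximation residuals of $F_{1},F_{2}$, and $C$ arises from $(u+v+w)^{\alpha r}\leq C'(u^{\alpha r}+v^{\alpha r}+w^{\alpha r})$ followed by the subadditivity of $s\mapsto s^{1/r}$ (valid since $r\geq 1$). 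Letting $k\to\infty$ drives the whole expression to $0$, which is precisely \eqref{jew} for $F$ with data $(\mathcal{B},\phi,{\rm F},r)$. The step I expect to be the main obstacle is the bookkeeping of the nonlinearity $\phi=x^{\alpha}$ for arbitrary $\alpha>0$: one must check that the constant $C(\alpha,r)$ in the triangle-type inequality is finite and independent of $k$ and $t$, and, crucially, that the two cross terms are controlled by Besicovitch boundedness of $F_{1},F_{2}$ rather than by the (possibly unbounded in $k$) sup-norms of the approximating polynomials.
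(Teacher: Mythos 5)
Your proof is correct, and it reaches the conclusion by a genuinely different decomposition than the paper's. The paper writes $F_{1}F_{2}-P_{1}P_{2}=(F_{1}-P_{1})F_{2}+P_{1}(F_{2}-P_{2})$, i.e.\ it bounds $\phi(\|F_{1}F_{2}-P_{1}P_{2}\|_{Y})\leq c_{\alpha}\bigl[\phi(|F_{1}-P_{1}|)\,\phi(\|F_{2}\|_{Y})+\phi(|P_{1}|)\,\phi(\|F_{2}-P_{2}\|_{Y})\bigr]$, and then invokes Proposition \ref{anat}(i) twice: once for $F_{2}$ and once for the approximating polynomial $P_{1}$ itself, whose Besicovitch bound enters the final estimate. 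Your three-term splitting $(F_{1}-P_{k})F_{2}+F_{1}(F_{2}-Q_{k})-(F_{1}-P_{k})(F_{2}-Q_{k})$ never has the polynomial as a standalone factor: the two cross terms are controlled by the Besicovitch bounds of $F_{1}$ and $F_{2}$, which are fixed once and for all by Proposition \ref{anat}(i), and the last term is a product of two residuals, hence vanishes by H\"older. This buys a real uniformity advantage: in the paper's version the constant attached to the $\phi(|P_{1}|)$-term depends on the chosen polynomial, hence on $\epsilon$, and strictly speaking one must add the observation (e.g.\ via $\phi(|P_{1}|)\leq c_{\alpha}[\phi(|P_{1}-F_{1}|)+\phi(|F_{1}|)]$) that this constant stays bounded as $\epsilon\rightarrow 0$ for the estimate to close — a point the paper leaves implicit. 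Your bookkeeping for $\phi(x)\equiv x^{\alpha}$ (the inequality $(u+v+w)^{\alpha r}\leq C\,(u^{\alpha r}+v^{\alpha r}+w^{\alpha r})$, subadditivity of $s\mapsto s^{1/r}$, and H\"older with exponents $p/r$, $q/r$, admissible precisely because $1/r=1/p+1/q$) is exactly what is needed and is independent of $k$ and $t$. The only cost of your route is the extra cross term; the benefit is that all constants are manifestly independent of $k$, $t$ and $\epsilon$, so the limit in \eqref{jew} for $F=F_{1}F_{2}$ follows without any further remark.
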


\begin{proof}
Let $\epsilon>0$ and $B\in {\mathcal B}$ be given. Then there exist a finite real number $t_{0}>0,$ a scalar-valued trigonometric polynomial $P_{1}(\cdot;\cdot)$ and a $Y$-valued trigonometric polynomial $P_{2}(\cdot;\cdot)$ such that, for every $x\in B,$ we have
\begin{align}\label{zoki}
\Bigl[\phi\bigl(\bigl\| F_{1}(\cdot;x)-P_{1}(\cdot;x)\bigr\|_{Y}\bigr)\Bigr]_{L^{p}(\Lambda_{t})}\leq \epsilon t^{n/p},\quad t\geq t_{0},
\end{align}
and
\begin{align}\label{zoki1}
\Bigl[\phi\bigl(\bigl\| F_{2}(\cdot;x)-P_{2}(\cdot;x)\bigr\|_{Y}\bigr)\Bigr]_{L^{q}(\Lambda_{t})}\leq \epsilon t^{n/q},\quad t\geq t_{0}.
\end{align}
Clearly, $P_{1}(\cdot;\cdot)P_{2}(\cdot;\cdot)$ is a $Y$-valued trigonometric polynomial.
Applying Proposition \ref{anat}(i), we get that the function $P_{1}(\cdot;\cdot)$ is Besicovitch-$(p,\phi,{\rm F}_{1},{\mathcal B})$-bounded and the function $F_{2}(\cdot;\cdot)$ is Besicovitch-$(q,\phi,{\rm F}_{2},{\mathcal B})$-bounded. Keeping in mind that $1/r=1/p+1/q$, the final conclusion simply follows using this fact, \eqref{zoki}-\eqref{zoki1}, the existence of a finite real number $c_{\alpha}>0$ such that
\begin{align*}
\phi\Bigl(\bigl\| & F_{1}(\cdot;x)F_{2}(\cdot;x)-P_{1}(\cdot;x)P_{2}(\cdot;x)\bigr\|_{Y}\Bigr)
\\& \leq c_{\alpha}\Biggl[\phi\Bigl(\bigl| F_{1}(\cdot;x)-P_{1}(\cdot;x) \bigr|\Bigr) \cdot \phi\Bigl(\bigl\|F_{2}(\cdot;x)\bigr\|_{Y}\Bigr)
\\&+\phi\Bigl(\bigl| P_{1}(\cdot;x) \bigr|\Bigr) \cdot \phi\Bigl(\bigl\| F_{2}(\cdot;x)-P_{2}(\cdot;x) \bigr\|_{Y}\Bigr)\Biggr],
\end{align*}
and the H\"older inequality.
\end{proof}

We continue by providing the following illustrative application of Proposition \ref{sdffds}:

\begin{example}\label{maric}
Suppose that $1\leq p_{1},...,p_{n},p<+\infty$ and $1/p=1/p_{1}+1/p_{2}+...+1/p_{n}.$ Define the function $F_{j} : {\mathbb R} \rightarrow {\mathbb R}$ by $F_{j}(t):=m^{1/2p_{j}}$ if $t\in [m^{2},m^{2}+\sqrt{|m|})$ for some $m\in {\mathbb Z},$ and $F_{j}(t):=0,$ otherwise ($1\leq j\leq n$). Then we know that the function $F_{j}(\cdot)$ is Besicovitch-$p_{j}$-almost periodic but not Besicovitch-$q$-almost periodic if $q>p_{j};$ see \cite[p. 42]{berta} and \cite[Example 6.24]{deda}. Define $F({\bf t}):=F_{1}(t_{1})\cdot F_{2}(t_{2})\cdot ... \cdot F_{n}(t_{n}),$ ${\bf t}\in {\mathbb R}^{n}.$ Applying Proposition \ref{sdffds} and a simple argumentation, it follows that the function $F(\cdot)$ is Besicovitch-$p$-almost periodic but not Besicovitch-$q$-almost periodic if $q>p.$
\end{example}

Sometimes we need the value of coefficient $p=+\infty$ in Proposition \ref{sdffds} and sometimes the usual choice ${\rm F}(t)\equiv t^{-n/p}$ is wrong if the region $\Lambda$ is bounded in direction of some real axes:

\begin{example}\label{maricm}
Suppose that $1\leq p<+\infty,$ the function $f : [0,2\pi]\rightarrow {\mathbb R}$ is absolutely continuous and the function $g : [0,\infty)\rightarrow Y$ is Besicovitch-$p$-almost periodic. Since the Fourier series of function $f(\cdot)$ converges uniformly to this function, arguing as in the proof of Proposition \ref{sdffds} we may conclude that the function $F(x,y):=f(x)g(y),$ $(x,y)\in \Lambda \equiv [0,2\pi]\times [0,\infty) \rightarrow Y$ belongs to the class $e-(x,t^{-1/p})-B^{p}(\Lambda : Y).$
\end{example}

Further on, the composition principles for one-dimensional Besicovitch-$p$-almost periodic functions have been analyzed for the first time by M. Ayachi and J. Blot in \cite[Lemma 4.1]{ayachi}. In the following theorem, we consider the Besicovitch-$p$-almost periodicity of the multi-dimensional Nemytskii operator $W : {\mathbb R}^{n} \times X \rightarrow Z,$ given by
\begin{align}\label{defin}
W({\bf t};x):=G({\bf t}; F({\bf t};x)),\quad {\bf t}\in {\mathbb R}^{n},\ x\in X,
\end{align}
where $F : {\mathbb R}^{n} \times X \rightarrow Y$ and $G : {\mathbb R}^{n} \times Y \rightarrow Z.$ We follow the ideas from \cite{ayachi} in (i):

\begin{thm}\label{krompo}
Suppose that $1\leq p,\ q<+\infty,$ $\alpha>0,$ $p=\alpha q,$ ${\rm F}(t)\equiv t^{-n/p},$ $\phi(x)\equiv x^{\zeta}$ for some real number $\zeta>0,$ $F\in e-({\mathcal B},\phi,{\rm F})-B^{p}({\mathbb R}^{n} \times X : Y),$ and ${\mathcal B}$ is the collection consisting of all bounded subsets of $X.$ 
\begin{itemize}
\item[(i)]
Suppose that $G : {\mathbb R}^{n} \times Y \rightarrow Z$ is Bohr ${\mathcal B}$-almost periodic and there exists a finite real constant $a>0$ such that 
\begin{align}\label{pravilo}
\Bigl\| G({\bf t}; y)-G({\bf t}; y')\Bigr\|_{Z}
\leq a \bigl\| y-y'\bigr\|_{Y}^{\alpha},\quad {\bf t}\in {\mathbb R}^{n},\ y,\ y'\in Y.
\end{align}
Then the function $W(\cdot;\cdot),$ given by \eqref{defin}, belongs to the class $e-({\mathcal B},\phi,t^{-n/q})-B^{q}({\mathbb R}^{n} \times X : Z).$
\item[(ii)] Define
$$
{\mathcal B}':=\Biggl\{ \bigcup_{{\bf t}\in {\mathbb R}^{n}}F({\bf t};B) \, ; \, B\in {\mathcal B}\Biggr\}.
$$
By $e-({\mathcal B}',\phi,t^{-n/q})-B^{q}_{a,\alpha}({\mathbb R}^{n} \times Y : Z)$ we denote the class of all functions $G_{1}(\cdot;\cdot)$ such that for each set $B'\equiv \cup_{{\bf t}\in {\mathbb R}^{n}}F({\bf t};B)\in {\mathcal B}'$ there exists a sequence of Bohr ${\mathcal B}$-almost periodic functions $(G_{1}^{k}(\cdot;\cdot))$ such that \eqref{pravilo} holds with the function $G(\cdot;\cdot)$ replaced therein by the function $G_{1}^{k}(\cdot;\cdot)$ for all $k\in {\mathbb N},$ the equation \eqref{jew} holds 
with the function $F(\cdot;\cdot)$ replaced therein by the function
$G(\cdot;\cdot)$, the polynomial $P_{k}(\cdot;\cdot)$ replaced therein by the function $G_{1}^{k}(\cdot;\cdot),$ the set $B$ replaced therein with the set $B',$ and the exponent $p(\cdot)$ replaced therein by the constant exponent $q$. If $G\in e-({\mathcal B}',\phi,t^{-n/q})-B^{q}_{a,\alpha}({\mathbb R}^{n} \times Y : Z),$ then
$W\in e-({\mathcal B},\phi,t^{-n/q})-B^{q}({\mathbb R}^{n} \times X : Z).$
\end{itemize}
\end{thm}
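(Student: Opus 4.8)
The plan is to mirror the one-dimensional composition argument of Ayachi and Blot \cite{ayachi}. For (i), I fix $B\in{\mathcal B}$, take trigonometric polynomials $(P_k)$ realizing $F\in e-({\mathcal B},\phi,{\rm F})-B^{p}$, and split
\[
W({\bf t};x)-Q_k({\bf t};x)=\big[G({\bf t};F({\bf t};x))-G({\bf t};P_k({\bf t};x))\big]+\big[G({\bf t};P_k({\bf t};x))-Q_k({\bf t};x)\big],
\]
where $Q_k$ is a trigonometric polynomial to be chosen. The first bracket is estimated by \eqref{pravilo}: pointwise $\|\,\cdot\,\|_Z\le a\|F({\bf t};x)-P_k({\bf t};x)\|_Y^{\alpha}$. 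Applying $\phi(r)=r^{\zeta}$, the $L^{q}(\Lambda_t)$-norm and $\sup_{x\in B}$, and using $\alpha\zeta q=\zeta p$ together with $t^{-n/q}=(t^{-n/p})^{\alpha}$ (both consequences of $p=\alpha q$), this contribution equals
\[
a^{\zeta}\Big(t^{-n/p}\sup_{x\in B}\big[\|F(\cdot;x)-P_k(\cdot;x)\|_Y^{\zeta}\big]_{L^{p}(\Lambda_t)}\Big)^{\alpha},
\]
whose $\limsup_{t\to+\infty}$ tends to $0$ as $k\to+\infty$ because $r\mapsto r^{\alpha}$ is continuous and $F\in e-({\mathcal B},\phi,{\rm F})-B^{p}$.

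For the second bracket I first show that the parametrized composition $H_k({\bf t};x):=G({\bf t};P_k({\bf t};x))$ is strongly ${\mathcal B}$-almost periodic. This is where the hypotheses on $G$ are used: for $x$ in the bounded set $B$ the trigonometric polynomial $P_k(\cdot;x)$ is Bohr almost periodic with range contained in a fixed bounded subset of $Y$ (as in the proof of Proposition \ref{anat}), while \eqref{pravilo} makes $G({\bf t};\cdot)$ H\"older continuous uniformly in ${\bf t}$; the composition theorem for Bohr almost periodic functions then yields that $H_k$ is a uniform-on-$({\mathbb R}^{n}\times B)$ limit of trigonometric polynomials. Choosing $Q_k$ with $\sup_{{\bf t}\in{\mathbb R}^n,\,x\in B}\|H_k({\bf t};x)-Q_k({\bf t};x)\|_Z\le\delta_k$ and $\delta_k\to0$, the second contribution is at most $t^{-n/q}\delta_k^{\zeta}(m(\Lambda_t))^{1/q}\le 2^{n/q}\delta_k^{\zeta}\to0$, since $m(\Lambda_t)\le(2t)^{n}$. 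Finally, because $\phi(u+v)\le c_{\zeta}(\phi(u)+\phi(v))$ with $c_{\zeta}:=\max(1,2^{\zeta-1})$, the two estimates combine through the triangle inequality for $\|\cdot\|_{L^{q}(\Lambda_t)}$, which gives $W\in e-({\mathcal B},\phi,t^{-n/q})-B^{q}({\mathbb R}^{n}\times X:Z)$.

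For (ii), I fix $B$, set $B'=\bigcup_{{\bf t}\in{\mathbb R}^n}F({\bf t};B)\in{\mathcal B}'$, and take the Bohr ${\mathcal B}$-almost periodic, $(a,\alpha)$-H\"older functions $(G_1^{k})$ supplied by $G\in e-({\mathcal B}',\phi,t^{-n/q})-B^{q}_{a,\alpha}$. Applying part (i) verbatim with $G_1^{k}$ in place of $G$ shows that $W_1^{k}:=G_1^{k}(\cdot;F(\cdot;\cdot))\in e-({\mathcal B},\phi,t^{-n/q})-B^{q}$ for every $k$. It then suffices to prove that $d_B(W,W_1^{k})\to0$ as $k\to+\infty$; a diagonal procedure (splitting $\|W-Q\|_Z\le\|W-W_1^{k}\|_Z+\|W_1^{k}-Q\|_Z$, using the quasi-subadditivity of $\phi$, and invoking the completeness of $(e-({\mathcal B},\phi,t^{-n/q})-B^{q},d_B)$ from Theorem \ref{gs}) then produces trigonometric polynomials approximating $W$, i.e. $W\in e-({\mathcal B},\phi,t^{-n/q})-B^{q}$. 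Since $F({\bf t};x)\in B'$ for all ${\bf t}\in{\mathbb R}^n$ and $x\in B$, the difference $W-W_1^{k}=(G-G_1^{k})(\cdot;F(\cdot;\cdot))$ is to be controlled by the deviation of $G$ from $G_1^{k}$ on the strip ${\mathbb R}^{n}\times B'$.

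I expect this last reduction, rather than the exponent bookkeeping of (i), to be the genuinely delicate part. The defining property of the class $e-({\mathcal B}',\phi,t^{-n/q})-B^{q}_{a,\alpha}$ controls $\sup_{y\in B'}\big[\|G(\cdot;y)-G_1^{k}(\cdot;y)\|_Z^{\zeta}\big]_{L^{q}(\Lambda_t)}$, i.e. the supremum over $B'$ sits outside the integral; but the diagonal evaluation forces the pointwise bound $\|(G-G_1^{k})({\bf t};F({\bf t};x))\|_Z\le\sup_{y\in B'}\|(G-G_1^{k})({\bf t};y)\|_Z$, after which the $L^{q}(\Lambda_t)$-norm produces $\big[\sup_{y\in B'}\|(G-G_1^{k})(\cdot;y)\|_Z^{\zeta}\big]_{L^{q}(\Lambda_t)}$, with the supremum inside. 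As $\big[\sup_{y}(\cdot)\big]_{L^q}\ge\sup_{y}\big[(\cdot)\big]_{L^q}$, passing from the hypothesis to the required estimate amounts to interchanging the supremum over $B'$ with the integration along the ${\bf t}$-dependent path $y=F({\bf t};x)$. The tools for this are the uniform H\"older control \eqref{pravilo} on each $G_1^{k}$ together with a trigonometric-polynomial approximation of $F$ (which reduces the moving argument to finitely many frequencies); making this interchange rigorous is, in my view, the main obstacle of the proof.
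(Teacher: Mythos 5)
Your part (i) is correct and follows essentially the paper's own route: the same splitting into $G(\cdot;F)-G(\cdot;P_k)$, controlled pointwise by \eqref{pravilo} together with the exponent identities $\alpha\zeta q=\zeta p$ and $t^{-n/q}=(t^{-n/p})^{\alpha}$, and the composed function $G(\cdot;P_k(\cdot;\cdot))$, which is Bohr ${\mathcal B}$-almost periodic by the composition theorem for Bohr almost periodic functions (the paper cites \cite[Theorem 6.1.47, Corollary 6.1.48]{nova-selected}). Your extra step of replacing this composed function by genuine trigonometric polynomials $Q_k$ is exactly what the paper delegates to observation (iii) following Definition \ref{ebe}; the bookkeeping, the use of $m(\Lambda_t)\leq (2t)^n$, and the quasi-subadditivity constant $\max(1,2^{\zeta-1})$ are all sound.

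Part (ii), however, you have not proved: you correctly reduce it to controlling the diagonal term $(G-G_1^{k})(\cdot;F(\cdot;\cdot))$, and there you stop, declaring the interchange of $\sup_{y\in B'}$ with the $L^{q}(\Lambda_t)$-integration along the moving argument $y=F({\bf t};x)$ to be the main obstacle. As submitted, the proposal therefore establishes (i) but leaves (ii) open. Incidentally, your reduction is organized differently from the paper's but is the same estimate: the paper writes the single decomposition $\phi(\|G({\bf t};F({\bf t};x))-G_{k}({\bf t};P({\bf t};x))\|_{Z})\leq 2^{\zeta}\bigl[\phi(\|G({\bf t};F({\bf t};x))-G_{k}({\bf t};F({\bf t};x))\|_{Z})+\phi(\|G_{k}({\bf t};F({\bf t};x))-G_{k}({\bf t};P({\bf t};x))\|_{Z})\bigr]$, whose second term is (i) applied to $G_{k}$ (uniformly in $k$, since the H\"older constant $a$ is common to all $G_1^{k}$) and whose first term is your diagonal term; also, your appeal to Theorem \ref{gs} is unnecessary, since the quasi-triangle inequality $d_B(W,Q)\leq c\,[d_B(W,W_1^{k})+d_B(W_1^{k},Q)]$ already shows the class is closed under $d_B$-limits without any completeness argument.

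That said, your diagnosis of the obstacle is accurate, and it should be recorded that the paper does not resolve it either: the paper's proof of (ii) consists of the displayed decomposition followed by the assertion that the first term is handled because "$G_{k}$ properly approximates $G$" in the class $e-({\mathcal B}',\phi,t^{-n/q})-B^{q}_{a,\alpha}({\mathbb R}^{n}\times Y:Z)$. With that class defined by the literal substitution into \eqref{jew}, the hypothesis controls $\sup_{y\in B'}\bigl[\phi(\|G(\cdot;y)-G_1^{k}(\cdot;y)\|_{Z})\bigr]_{L^{q}(\Lambda_t)}$ (supremum outside the norm), whereas the pointwise bound $\|(G-G_1^{k})({\bf t};F({\bf t};x))\|_{Z}\leq\sup_{y\in B'}\|(G-G_1^{k})({\bf t};y)\|_{Z}$ leads to $\bigl[\sup_{y\in B'}\phi(\|(G-G_1^{k})(\cdot;y)\|_{Z})\bigr]_{L^{q}(\Lambda_t)}$ (supremum inside), and the inequality between these two quantities goes the wrong way; for general measurable families the sup-outside quantity can be small while the sup-inside quantity is large, because the exceptional sets may move with $y$. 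The step becomes immediate if the defining condition of $B^{q}_{a,\alpha}$ is read with the supremum inside the $L^{q}$-norm (then both your argument and the paper's close at once); with the supremum outside, one would need an additional ingredient, e.g. total boundedness of $B'$ and an $\epsilon$-net argument exploiting \eqref{pravilo} for the $G_1^{k}$ -- and even that is delicate, since $G$ itself inherits only an averaged, not pointwise, H\"older property. So: your (i) stands, your (ii) is incomplete, and the gap you name is genuine; the paper passes over it by implicitly assuming the stronger (sup-inside) form of the hypothesis.
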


\begin{proof}
Let $\epsilon>0$ and $B\in {\mathcal B}$ be given. Then there exist a trigonometric polynomial $P(\cdot;\cdot)$ and a finite real number $t_{0}>0$ such that 
$$
\sup_{x\in  B}\int_{[-t,t]^{n}}\bigl\| F({\bf t};x)-P({\bf t};x)\bigr\|_{Y}^{\zeta p}\, d{\bf t}<\epsilon t^{n},\quad t\geq t_{0}.
$$
We will first prove (i).
Since we have assumed that \eqref{pravilo} holds and ${\mathcal B}$ is the collection consisting of all bounded subsets of $X$, the argumentation contained in the proof of \cite[Theorem 6.1.47, Corollary 6.1.48]{nova-selected} shows that the function $W_{1} : {\mathbb R}^{n} \times X \rightarrow Z,$ given by $W_{1}({\bf t};x):=G({\bf t};P({\bf t};x)),$ ${\bf t}\in {\mathbb R}^{n},$ $x\in X,$ is Bohr ${\mathcal B}$-almost periodic. Then the final conclusion simply follows by observing that $p=\alpha q,$ using the next estimate which holds for any $t>0$ and $x\in B;$ see \eqref{pravilo}:
\begin{align*}
\int_{[-t,t]^{n}}\bigl\| W({\bf t};x)-W_{1}({\bf t};x)\bigr\|_{Z}^{\zeta q}\, d{\bf t}\leq a^{\zeta q}\int_{[-t,t]^{n}}\bigl\| F({\bf t};x)-P({\bf t};x)\bigr\|_{Y}^{\zeta p}\, d{\bf t}.
\end{align*}
In order to prove (ii), it suffices to apply (i) and use the decomposition{\small
\begin{align*}
&\phi\Bigl(\bigl\|  G({\bf t};F({\bf t};x))-G_{k}({\bf t};P({\bf t};x))\bigr\|_{Z}\Bigr)
\\& \leq 2^{\zeta}\Biggl[\phi\Bigl(\bigl\| G({\bf t};F({\bf t};x))-G_{k}({\bf t};F({\bf t};x))\bigr\|_{Z}\Bigr)+\phi\Bigl(\bigl\|  G_{k}({\bf t};F({\bf t};x))-G_{k}({\bf t};P({\bf t};x))\bigr\|_{Z}\Bigr)\Biggr],
\end{align*}}
where $G_{k}(\cdot;\cdot)$ properly approximates $G(\cdot;\cdot)$
in the space $e-({\mathcal B}',\phi,t^{-n/q})-B^{q}({\mathbb R}^{n} \times Y : Z).$
\end{proof}

\subsection{Multi-dimensional Besicovitch normal type functions}\label{normalno}

The notion of a Besicovitch $p$-normal function $f : {\mathbb R} \rightarrow {\mathbb C}$ was introduced by R. Doss in \cite{doss0} and later reconsidered by the same author in \cite{doss1}; cf. also
\cite[Subsection 8.3.2, Definition 8.3.18]{nova-selected}, where we have recently analyzed the concept Weyl $p$-almost automorphy (of type $2$) without limit functions. In this subsection, we will consider the following notion:

\begin{defn}\label{dfggg}
Suppose that ${\mathrm R}$ is any collection of sequences in $\Lambda'',$ $F: \Lambda \times X \rightarrow Y,$ $\phi : [0,\infty) \rightarrow [0,\infty)$ and ${\rm F} : (0,\infty) \rightarrow (0,\infty).$ Then we say that the function $F(\cdot;\cdot)$ is
Besicovitch$-({\mathrm R}, {\mathcal B},\phi,{\rm F})-B^{p(\cdot)}$-normal
if and only if for every set $B\in {\mathcal B}$ and for every sequence $({\bf b}_{k})_{k\in {\mathbb N}}$ in ${\mathrm R}$ there exists a subsequence $({\bf b}_{k_{m}})_{m\in {\mathbb N}}$ of $({\bf b}_{k})_{k\in {\mathbb N}}$ such that, for every $\epsilon>0,$ there exists an integer $m_{0}\in {\mathbb N}$ such that, for every integers $m,\ m'\geq m_{0},$ we have 
\begin{align*}
\limsup_{t\rightarrow +\infty}{\mathrm F}(t)\sup_{x\in B}\Biggl[ \phi\Bigl( \bigl\| F({\bf t}+{\bf b}_{k_{m}};x)-F({\bf t}+{\bf b}_{k_{m'}};x)\bigr\|_{Y} \Bigr)\Biggr]_{L^{p({\bf t})}(\Lambda_{t})} <\epsilon.
\end{align*}
\end{defn}
The usual notion of Besicovitch-$p$-normality for the function $F : \Lambda \rightarrow Y,$ where $1\leq p<+\infty,$ is obtained by plugging $\phi(x)\equiv x$ and ${\rm F}(t)\equiv t^{-n/p},$ with 
${\mathrm R}$ being the collection of all sequences in $\Lambda''.$

In the sequel, we will occasionally use the following conditions:

\begin{itemize}
\item[(I)] $\phi(\cdot)$ is monotonically increasing, continuous at the point $t=0,$ and $p(\cdot)\equiv p\in [1,\infty).$
\item[(II)]  There exists $c\in (0,1]$ such that $\phi(x+y)\leq c[\phi(x)+\phi(y)]$ for all $x,\ y\geq 0$, and there exists a function $\varphi : [0,\infty) \rightarrow [0,\infty)$ such that $\phi(xy)\leq \phi(x)\varphi(y)$ for all $x,\ y\geq 0$ and $D:=\sup_{m\in {\mathbb N}}[m\varphi(1/m)]<+\infty.$
\item[(III)] $\limsup_{t\rightarrow +\infty}[{\rm F}(t)m(\Lambda_{t})^{1/p}]<+\infty$ and, for every real number $a>0,$ we have $\limsup_{t\rightarrow +\infty}[{\rm F}(t)/{\rm F}(t+a)]\leq 1.$
\end{itemize}

It is clear that (II) holds provided that $\phi(x)\equiv x^{\alpha}$ for some real number $\alpha\geq 1$ as well as that (II) does not hold if $\phi(x)\equiv x^{\alpha}$ for some real number $\alpha\in (0,1).$

Repeating verbatim the argumentation contained in the proof of \cite[Theorem 6.3.19]{nova-selected}, where we have analyzed the concept Weyl $({\mathrm R},{\mathcal B},p)$-normality, the following result can be deduced without any substantial difficulties: 

\begin{prop}\label{raj-sed}
Suppose that  $F: \Lambda \times X \rightarrow Y$, $F\in e-({\mathcal B},\phi,{\rm F})-B^{p(\cdot)}(\Lambda \times X : Y),$ and conditions \emph{(I)-(III)} hold. Then $F(\cdot;\cdot)$ is Besicovitch$-({\mathrm R}, {\mathcal B},\phi,{\rm F})-B^{p(\cdot)}$-normal.
\end{prop}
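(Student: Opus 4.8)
The plan is to transfer the normality problem to the approximating trigonometric polynomials of $F$ supplied by \eqref{jew}, which are genuinely Bohr almost periodic and therefore admit uniformly convergent subsequences of translates, and then to dominate the two resulting ``error'' terms by a change of variables together with condition (III). First I would fix $B\in{\mathcal B}$ and a sequence $({\bf b}_{k})$ in ${\mathrm R}\subseteq\Lambda''$, and use condition (I) to replace the Luxemburg norm $[\cdot]_{L^{p(\cdot)}(\Lambda_{t})}$ throughout by the ordinary norm $(\int_{\Lambda_{t}}(\cdot)^{p}\,d{\bf t})^{1/p}$ with constant exponent $p$. By the membership $F\in e-({\mathcal B},\phi,{\rm F})-B^{p(\cdot)}(\Lambda \times X : Y)$ there is a sequence $(P_{N})$ of trigonometric polynomials with $\lim_{N\to+\infty}\limsup_{t\to+\infty}{\rm F}(t)\sup_{x\in B}[\phi(\|F(\cdot;x)-P_{N}(\cdot;x)\|_{Y})]_{L^{p}(\Lambda_{t})}=0$.

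Next I would extract the subsequence. Writing $P_{N}({\bf t};x)=\sum_{j} e^{i\langle\lambda_{j}^{N},{\bf t}\rangle}c_{j}^{N}(x)$, a shift by ${\bf b}$ multiplies the $j$-th summand by the phase $e^{i\langle\lambda_{j}^{N},{\bf b}\rangle}$. Since there are only countably many frequencies among all the $P_{N}$, a diagonal argument over the compact unit circle extracts a single subsequence $({\bf b}_{k_{m}})$ along which every phase $e^{i\langle\lambda_{j}^{N},{\bf b}_{k_{m}}\rangle}$ converges; using that the continuous coefficient functions $c_{j}^{N}(\cdot)$ are bounded on $B$ (as for the structural results above, the members of ${\mathcal B}$ are taken bounded), this makes $(P_{N}(\cdot+{\bf b}_{k_{m}};\cdot))_{m}$ uniformly Cauchy on $\Lambda\times B$ for each fixed $N$. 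Given $\epsilon>0$ I would then split
\begin{align*}
\|F(\cdot+{\bf b}_{k_{m}};x)-F(\cdot+{\bf b}_{k_{m'}};x)\|_{Y}&\le \|F(\cdot+{\bf b}_{k_{m}};x)-P_{N}(\cdot+{\bf b}_{k_{m}};x)\|_{Y}\\&\quad+\|P_{N}(\cdot+{\bf b}_{k_{m}};x)-P_{N}(\cdot+{\bf b}_{k_{m'}};x)\|_{Y}\\&\quad+\|P_{N}(\cdot+{\bf b}_{k_{m'}};x)-F(\cdot+{\bf b}_{k_{m'}};x)\|_{Y},
\end{align*}
apply $\phi$ and use the generalized subadditivity in (II) (with $c\le1$) to bound $\phi$ of the sum by $c$ times the sum of the three $\phi$-values, and finally take the $L^{p}(\Lambda_{t})$-norm, the supremum over $x\in B$, the factor ${\rm F}(t)$ and $\limsup_{t}$.

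For the two outer terms I would fix $m$, set $a:=|{\bf b}_{k_{m}}|$, and change variables ${\bf s}={\bf t}+{\bf b}_{k_{m}}$; since ${\bf b}_{k_{m}}\in\Lambda''$ the map keeps the integrand inside $\Lambda$ and carries $\Lambda_{t}$ into $\Lambda_{t+a}$, so that ${\rm F}(t)$ times the translated error is at most $\big({\rm F}(t)/{\rm F}(t+a)\big)$ times $\big[{\rm F}(t+a)\,\sup_{x\in B}[\phi(\|F(\cdot;x)-P_{N}(\cdot;x)\|_{Y})]_{L^{p}(\Lambda_{t+a})}\big]$. Taking $\limsup_{t}$ and invoking $\limsup_{t\to+\infty}{\rm F}(t)/{\rm F}(t+a)\le1$ from (III) bounds this by the $N$-th Besicovitch error, hence by $\delta$ once $N$ is chosen large. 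For the middle term the uniform Cauchy property gives $\sup_{{\bf t}\in\Lambda,\,x\in B}\|P_{N}({\bf t}+{\bf b}_{k_{m}};x)-P_{N}({\bf t}+{\bf b}_{k_{m'}};x)\|_{Y}<\eta$ for $m,m'\ge m_{0}$, whence by monotonicity of $\phi$ and the bound $\limsup_{t\to+\infty}{\rm F}(t)m(\Lambda_{t})^{1/p}<\infty$ of (III) this contributes at most $\phi(\eta)\,M$ for a finite $M$, which is small since $\phi$ is continuous at $0$. Combining, the whole $\limsup$ is $\le c\,(2\delta+\phi(\eta)M)$, which is $<\epsilon$ for suitable $\delta,\eta$.

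The main obstacle is the uniform-in-$m$ control of the translated approximation error, because the shifts ${\bf b}_{k_{m}}$ are in general unbounded and one cannot freeze a single $a$. The resolution is precisely that (III) is assumed for \emph{every} $a>0$: the estimate $\limsup_{t}{\rm F}(t)/{\rm F}(t+a)\le1$ applies with $a=|{\bf b}_{k_{m}}|$ for each individual $m$ and yields the same threshold $\delta$ regardless of $m$, which is why this condition is phrased uniformly over $a$. A secondary technical point is the faithful passage of $\phi$ through the finite sums and the rescalings; the homogeneity-type bound $\phi(xy)\le\phi(x)\varphi(y)$ with $\sup_{m}m\varphi(1/m)<\infty$ in (II) is what keeps these controlled in the general (non-power) case, while (I) is what allows the entire computation to be run with an ordinary $L^{p}$-norm in place of the Luxemburg norm.
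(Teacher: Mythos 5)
Your proposal is correct and follows essentially the route the paper takes: the paper's own proof is a one-line appeal to the Weyl-normality argument of \cite[Theorem 6.3.19]{nova-selected}, which is exactly your scheme (approximation by trigonometric polynomials, diagonal extraction of a single subsequence along which all phases $e^{i\langle \lambda_{j}^{N},{\bf b}_{k_{m}}\rangle}$ converge, a three-term splitting handled via (I)--(II), and condition (III) to absorb the two translated approximation errors uniformly in $m$). The only point worth noting is that your control of the middle term uses that the sets of ${\mathcal B}$ are bounded, so that the coefficient functions $c_{j}^{N}(\cdot)$ are bounded on $B$; this hypothesis is not listed in Proposition \ref{raj-sed} itself, but the paper relies on it in precisely the same way in its neighbouring results (e.g., Proposition \ref{anat} and condition (V) of Proposition \ref{raj}).
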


Even in the usual one-dimensional framework, we know that the converse statement of Proposition \ref{raj-sed} is not true in general (see, e.g., \cite{deda}) as well as that the usual Besicovitch-$p$-normality does not imply Besicovitch-$p$-continuity (see, e.g., \cite{doss1}). Further on, let $k\in {\mathbb N}$ and $F_{i} : \Lambda \times X \rightarrow Y_{i}$ ($1\leq i\leq k$). Then we define the function $(F_{1},..., F_{k}) : \Lambda \times X \rightarrow Y_{1}\times ... \times Y_{k}$ by
$$
\bigl(F_{1},..., F_{k}\bigr)({\bf t} ;x):=\bigl(F_{1}({\bf t};x) , ..., F_{k}({\bf t};x) \bigr),\quad {\bf t} \in \Lambda,\ x\in X.
$$

The following result is trivial and its proof is therefore omitted:

\begin{prop}\label{kursk-kursk}
Suppose that $k\in {\mathbb N}$, $\emptyset \neq \Lambda \subseteq {\mathbb R}^{n},$ and we have that, for any sequence which belongs to 
${\mathrm R},$ any its subsequence also belongs to ${\mathrm R}.$
If the function $F_{i}(\cdot;\cdot)$ is Besicovitch$-({\mathrm R}, {\mathcal B},\phi,{\rm F})-B^{p(\cdot)}$-normal for $1\leq i\leq k$, then the function $(F_{1},..., F_{k})(\cdot;\cdot)$ is also Besicovitch$-({\mathrm R}, {\mathcal B},\phi,{\rm F})-B^{p(\cdot)}$-normal.
\end{prop}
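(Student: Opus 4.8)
The plan is to prove the statement by iterated extraction of subsequences, one per coordinate, the hypothesis that $\mathrm{R}$ is closed under passing to subsequences being exactly what licenses the successive extractions; the recombination of the $k$ coordinates will then be a short estimate based on equipping $Y_{1}\times\cdots\times Y_{k}$ with the maximum norm.

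First I would fix $B\in{\mathcal B}$ and a sequence $({\bf b}_{j})_{j\in{\mathbb N}}$ in $\mathrm{R}$. Applying the normality of $F_{1}$ gives a subsequence $({\bf b}^{(1)}_{j})$ of $({\bf b}_{j})$ along which the defining Cauchy-type estimate of Definition \ref{dfggg} holds for $F_{1}$. By hypothesis $({\bf b}^{(1)}_{j})\in\mathrm{R}$, so the normality of $F_{2}$ applies to it and produces a further subsequence $({\bf b}^{(2)}_{j})$ along which the estimate holds for $F_{2}$; since $({\bf b}^{(2)}_{j})$ is a subsequence of $({\bf b}^{(1)}_{j})$, the estimate for $F_{1}$ is automatically inherited (a bound valid for all pairs of indices $\ge m_{0}$ survives re-indexing). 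Repeating this $k$ times, I obtain one subsequence $({\bf b}_{j_{m}})_{m\in{\mathbb N}}$ of $({\bf b}_{j})$ along which the defining estimate of Definition \ref{dfggg} holds simultaneously for $F_{1},\ldots,F_{k}$.

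It remains to recombine. Equipping $Y_{1}\times\cdots\times Y_{k}$ with the maximum norm, the increment of $(F_{1},\ldots,F_{k})$ at $({\bf t}+{\bf b}_{j_{m}};x)$ versus $({\bf t}+{\bf b}_{j_{m'}};x)$ equals $\max_{1\le i\le k}\|F_{i}({\bf t}+{\bf b}_{j_{m}};x)-F_{i}({\bf t}+{\bf b}_{j_{m'}};x)\|_{Y_{i}}$. Since for finitely many nonnegative reals the maximum is one of them, $\phi(\max_{i}a_{i})$ is one of the values $\phi(a_{i})$ and hence is bounded pointwise by $\sum_{i=1}^{k}\phi(a_{i})$ — so, pleasantly, no monotonicity or subadditivity of $\phi$ is needed here. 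Invoking this pointwise bound together with Lemma \ref{aux}(iii), then the triangle inequality for $\|\cdot\|_{L^{p({\bf t})}(\Lambda_{t})}$, and finally $\sup_{x\in B}\sum_{i}\le\sum_{i}\sup_{x\in B}$ and $\limsup_{t}\sum_{i}\le\sum_{i}\limsup_{t}$, I would bound the quantity of Definition \ref{dfggg} for $(F_{1},\ldots,F_{k})$ by
$$
\sum_{i=1}^{k}\limsup_{t\rightarrow+\infty}{\mathrm F}(t)\sup_{x\in B}\Bigl[\phi\bigl(\|F_{i}({\bf t}+{\bf b}_{j_{m}};x)-F_{i}({\bf t}+{\bf b}_{j_{m'}};x)\|_{Y_{i}}\bigr)\Bigr]_{L^{p({\bf t})}(\Lambda_{t})}.
$$
Given $\epsilon>0$, I would use the normality of each $F_{i}$ with tolerance $\epsilon/k$, take $m_{0}$ to be the maximum of the resulting thresholds, and conclude that for $m,m'\ge m_{0}$ every summand is $<\epsilon/k$, so the sum is $<\epsilon$; this is precisely the defining estimate for $(F_{1},\ldots,F_{k})$.

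There is no genuine obstacle, which is why the proposition is stated as trivial; the only point to keep in view is the role of the closure of $\mathrm{R}$ under subsequences, since each application of normality after the first is made to a subsequence extracted in the previous step rather than to the original sequence $({\bf b}_{j})$ itself.
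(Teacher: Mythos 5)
Your proof is correct, and it is exactly the argument the paper has in mind: the paper states Proposition \ref{kursk-kursk} without proof (``trivial and its proof is therefore omitted''), and the omitted details are precisely your finite iterated subsequence extraction---legitimized at each stage by the closure of ${\mathrm R}$ under subsequences, with the Cauchy-type estimate inherited along further subsequences since $j_{l}\geq l$---followed by the recombination estimate. Your choice of the maximum norm on $Y_{1}\times\cdots\times Y_{k}$, under which $\phi\bigl(\max_{i}a_{i}\bigr)$ coincides pointwise with one of the $\phi(a_{i})$ and is thus dominated by $\sum_{i}\phi(a_{i})$ without any monotonicity or subadditivity hypothesis on $\phi$, is a clean touch that keeps the proposition valid in the full generality in which it is stated.
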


The interested reader may try to formulate certain conditions ensuring that the limit function of a sequence of uniformly convergent Besicovitch$-({\mathrm R}, {\mathcal B},\phi,{\rm F})-B^{p(\cdot)}$-normal functions (the functions belonging to the class $e-({\mathcal B},\phi,{\rm F})-B^{p(\cdot)}(\Lambda \times X : Y)$) is also Besicovitch$-({\mathrm R}, {\mathcal B},\phi,{\rm F})-B^{p(\cdot)}$-normal (belongs to the class $e-({\mathcal B},\phi,{\rm F})-B^{p(\cdot)}(\Lambda \times X : Y)$); see  \cite{nova-selected} for many results of this type. 

Several structural properties of functions belonging to the class $e-({\mathcal B},\phi,{\rm F})-B^{p(\cdot)}(\Lambda \times X : Y)$ can be simply reformulated for the class of Besicovitch$-({\mathrm R}, {\mathcal B},\phi,{\rm F})-B^{p(\cdot)}$-normal functions. For example, we have the following analogue of Proposition \ref{sdffds}:

\begin{prop}\label{sdffds1}
Suppose that $p,\ q,\ r \in [1,\infty),$ $1/r=1/p+1/q$, ${\rm F}_{1}(t)\equiv t^{-n/p}$,
${\rm F}_{2}(t)\equiv t^{-n/q},$
${\rm F}(t)\equiv t^{-n/r}$, $\phi(x)\equiv x^{\alpha}$ for some real number $\alpha>0$ and, for any sequence which belongs to 
${\mathrm R},$ any its subsequence also belongs to ${\mathrm R}.$
If the function $F_{1} : \Lambda \times X\rightarrow {\mathbb C}$ is
Besicovitch$-({\mathrm R}, {\mathcal B},\phi,{\rm F}_{1})-B^{p}$-normal and  Besicovitch-$(p,\phi,{\rm F}_{1}, {\mathcal B})$-bounded as well as the function 
$F_{2} : \Lambda \times X\rightarrow Y$ is
Besicovitch$-({\mathrm R}, {\mathcal B},\phi,{\rm F}_{2})-B^{q}$-normal and  Besicovitch-$(q,\phi,{\rm F}_{2}, {\mathcal B})$-bounded,
then the function $F : \Lambda \times X \rightarrow Y,$ given by $F({\bf t};x):=F_{1}({\bf t};x)F_{2}({\bf t};x),$ ${\bf t}\in \Lambda,$ $x\in X,$ is Besicovitch$-({\mathrm R}, {\mathcal B},\phi,{\rm F}_{1})-B^{r}$-normal.
\end{prop}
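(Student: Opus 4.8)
The plan is to verify the Cauchy-type condition of Definition \ref{dfggg} for the product $F=F_{1}F_{2}$ by performing two successive subsequence extractions and then estimating the difference of the products through the very same algebraic decomposition and H\"older inequality that drive the proof of Proposition \ref{sdffds}. The weight in the conclusion should be read as ${\rm F}(t)\equiv t^{-n/r}$, and the key structural fact I will exploit is the factorization ${\rm F}(t)={\rm F}_{1}(t){\rm F}_{2}(t)$ coming from $1/r=1/p+1/q$.

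First I would fix a set $B\in{\mathcal B}$ and a sequence $({\bf b}_{k})$ in ${\mathrm R}$. Applying the Besicovitch$-({\mathrm R},{\mathcal B},\phi,{\rm F}_{1})-B^{p}$-normality of $F_{1}$, I extract a subsequence $({\bf b}_{k_{m}})$ along which the $F_{1}$-translates are Cauchy in the relevant seminorm. Since by hypothesis every subsequence of a sequence in ${\mathrm R}$ again lies in ${\mathrm R}$, the sequence $({\bf b}_{k_{m}})$ belongs to ${\mathrm R}$, so I may apply the Besicovitch$-({\mathrm R},{\mathcal B},\phi,{\rm F}_{2})-B^{q}$-normality of $F_{2}$ to it and extract a further subsequence $({\bf b}_{k_{m_{j}}})$ along which the $F_{2}$-translates are Cauchy. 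The $F_{1}$-translates remain Cauchy along this sub-subsequence, so along $({\bf b}_{k_{m_{j}}})$ both families are simultaneously Cauchy; this common subsequence is the one that will witness the normality of $F$.

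Along the common subsequence, writing $F_{i}=F_{i}(\cdot+{\bf b}_{k_{m_{j}}};x)$ and $F_{i}'=F_{i}(\cdot+{\bf b}_{k_{m_{j'}}};x)$, I would use the pointwise identity $F_{1}F_{2}-F_{1}'F_{2}'=(F_{1}-F_{1}')F_{2}+F_{1}'(F_{2}-F_{2}')$ together with $\phi(x+y)\leq c_{\alpha}[\phi(x)+\phi(y)]$ and $\phi(xy)=\phi(x)\phi(y)$ (both valid since $\phi(x)\equiv x^{\alpha}$) to obtain the key estimate
\begin{align*}
\phi\bigl(\|F_{1}F_{2}-F_{1}'F_{2}'\|_{Y}\bigr)\leq c_{\alpha}\bigl[\phi(|F_{1}-F_{1}'|)\,\phi(\|F_{2}\|_{Y})+\phi(|F_{1}'|)\,\phi(\|F_{2}-F_{2}'\|_{Y})\bigr].
\end{align*}
Taking the $L^{r}(\Lambda_{t})$-seminorm, applying the H\"older inequality of Lemma \ref{aux}(i) with $1/r=1/p+1/q$ to each of the two products, and multiplying through by ${\rm F}(t)={\rm F}_{1}(t){\rm F}_{2}(t)$, I split the bound into two products, each of the form (a translate-difference factor) times (a boundedness factor). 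Integrability of the products is itself guaranteed by H\"older together with the two boundedness hypotheses.

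Finally I would pass to $\sup_{x\in B}$ and $\limsup_{t\to+\infty}$, using that both operations are submultiplicative on nonnegative quantities, i.e. $\sup(ab)\leq(\sup a)(\sup b)$ and $\limsup(ab)\leq(\limsup a)(\limsup b)$. In the first product the translate-difference factor is made $<\epsilon$ by the $F_{1}$-normality for $j,j'$ large, while the companion factor $\limsup_{t}{\rm F}_{2}(t)\sup_{x\in B}[\phi(\|F_{2}\|_{Y})]_{L^{q}(\Lambda_{t})}$ is finite by the Besicovitch$-(q,\phi,{\rm F}_{2},{\mathcal B})$-boundedness of $F_{2}$; symmetrically for the second product, using the $F_{2}$-normality and the Besicovitch$-(p,\phi,{\rm F}_{1},{\mathcal B})$-boundedness of $F_{1}$. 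This delivers the required smallness uniformly for all large indices, establishing the normality of $F$. The main obstacle I anticipate is the bookkeeping of the two nested extractions so that both Cauchy conditions hold along a single subsequence, and the justification of the submultiplicativity of $\limsup$ without meeting the indeterminate form $0\cdot\infty$; the two boundedness hypotheses are exactly what exclude the latter, since they force the companion factors to be finite.
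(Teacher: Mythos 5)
Your overall architecture --- the two nested extractions justified by the closure of ${\mathrm R}$ under subsequences, the decomposition $F_{1}F_{2}-F_{1}'F_{2}'=(F_{1}-F_{1}')F_{2}+F_{1}'(F_{2}-F_{2}')$, the H\"older inequality with $1/r=1/p+1/q$, and the factorization ${\rm F}(t)={\rm F}_{1}(t){\rm F}_{2}(t)$ --- is exactly the reduction the paper performs when it invokes the proof of Proposition \ref{sdffds}. The gap lies in your last step. In your own notation $F_{2}$ and $F_{1}'$ denote the \emph{translates} $F_{2}(\cdot+{\bf b}_{k_{m_{j}}};x)$ and $F_{1}(\cdot+{\bf b}_{k_{m_{j'}}};x)$, so the two companion factors in your final bound are
\begin{align*}
M_{j}&:=\limsup_{t\rightarrow +\infty}{\rm F}_{2}(t)\sup_{x\in B}\Bigl[\phi\bigl(\bigl\|F_{2}\bigl(\cdot+{\bf b}_{k_{m_{j}}};x\bigr)\bigr\|_{Y}\bigr)\Bigr]_{L^{q}(\Lambda_{t})},
\\ M_{j'}'&:=\limsup_{t\rightarrow +\infty}{\rm F}_{1}(t)\sup_{x\in B}\Bigl[\phi\bigl(\bigl|F_{1}\bigl(\cdot+{\bf b}_{k_{m_{j'}}};x\bigr)\bigr|\bigr)\Bigr]_{L^{p}(\Lambda_{t})}.
\end{align*}
These are seminorms of the translated functions, whereas the Besicovitch-$(q,\phi,{\rm F}_{2},{\mathcal B})$- and Besicovitch-$(p,\phi,{\rm F}_{1},{\mathcal B})$-boundedness hypotheses speak only about the untranslated $F_{2}(\cdot;x)$ and $F_{1}(\cdot;x)$, so quoting them here is a non sequitur. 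Worse, finiteness of each $M_{j}$ separately is not enough: your final estimate has the shape $c_{\alpha}\bigl[\epsilon\, M_{j}+M_{j'}'\,\epsilon\bigr]$, so you need $\sup_{j}M_{j}<+\infty$ and $\sup_{j'}M_{j'}'<+\infty$; otherwise the smallness supplied by the two Cauchy conditions can be destroyed as $j,\ j'\rightarrow+\infty$, and the Cauchy property of the product does not follow. Establishing precisely this uniform boundedness of the translated family is the entire content of the paper's proof (its estimates \eqref{qwerqwer} and \eqref{qwerqwer1}); note that the paper obtains it not from boundedness alone but from boundedness \emph{combined with} normality, comparing $F_{1}(\cdot+{\bf b}_{k};x)$ for $k\geq k_{0}$ with the single anchor translate $F_{1}(\cdot+{\bf b}_{k_{0}};x)$ through the Cauchy estimate and a quasi-subadditivity inequality for $\phi(x)\equiv x^{\alpha}.$

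The omission is repairable along your own lines, but it requires an argument you never give. Since ${\mathrm R}$ consists of sequences in $\Lambda''$, translation by ${\bf b}$ maps $\Lambda_{t}$ into $\Lambda_{t+|{\bf b}|}$, so a change of variables gives, for ${\bf b}={\bf b}_{k_{m_{j}}}$ fixed,
\begin{align*}
{\rm F}_{2}(t)\sup_{x\in B}\Bigl[\phi\bigl(\bigl\|F_{2}(\cdot+{\bf b};x)\bigr\|_{Y}\bigr)\Bigr]_{L^{q}(\Lambda_{t})}\leq \Biggl(\frac{t+|{\bf b}|}{t}\Biggr)^{n/q}{\rm F}_{2}\bigl(t+|{\bf b}|\bigr)\sup_{x\in B}\Bigl[\phi\bigl(\bigl\|F_{2}(\cdot;x)\bigr\|_{Y}\bigr)\Bigr]_{L^{q}(\Lambda_{t+|{\bf b}|})};
\end{align*}
letting $t\rightarrow+\infty$ with $j$ fixed, the ratio tends to $1$ (here the homogeneity of the weight ${\rm F}_{2}(t)\equiv t^{-n/q}$ is essential) and you obtain $M_{j}\leq M_{B}$ for \emph{every} $j$, where $M_{B}$ is the Besicovitch bound of $F_{2}$ itself; similarly $M_{j'}'\leq M_{B}'.$ Once this lemma is inserted, your proof closes and is in fact a legitimate alternative to the paper's anchor-at-${\bf b}_{k_{0}}$ argument. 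As written, however, the decisive uniformity claim is asserted rather than proved, and it is the only non-trivial point of the proposition.
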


\begin{proof}
Let a set $B\in {\mathcal B}$ and a sequence $({\bf b}_{k})_{k\in {\mathbb N}}$ in ${\mathrm R}$ be given.
Keeping in mind the proof of Proposition \ref{sdffds} and our assumption that for any sequence which belongs to 
${\mathrm R}$ any its subsequence also belongs to ${\mathrm R}$, it suffices to show that there exist two sufficiently large real numbers $t_{0}>0$ and $M>0$ such that 
\begin{align}\label{qwerqwer}
\sup_{x\in B,k\in {\mathbb N}}\Bigl[\phi\bigl(|F_{1}(\cdot+{\bf b}_{k};x)|\bigr)\Bigr]_{L^{p}(\Lambda_{t})} \leq Mt^{n/p},\quad t\geq t_{0}
\end{align}
and 
\begin{align}\label{qwerqwer1}
\sup_{x\in B,k\in {\mathbb N}}\Bigl[\phi\bigl(\|F_{2}(\cdot+{\bf b}_{k};x)\|_{Y}\bigr)\Bigr]_{L^{q}(\Lambda_{t})}\leq Mt^{n/q},\quad t\geq t_{0}.
\end{align}
Since the function $F_{1} : \Lambda \times X\rightarrow {\mathbb C}$ is
Besicovitch-$(p,\phi,{\rm F}_{1}, {\mathcal B})$-bounded, it can be simply proved that 
\begin{align*}
\sup_{x\in B}\Bigl[\phi\bigl(|F_{1}(\cdot+{\bf b}_{k};x)|\bigr)\Bigr]_{L^{p}(\Lambda_{t})}\ \leq Mt^{n/p},\quad t\geq t_{0}.
\end{align*}
Moreover, we have the existence of a finite real number $t_{0}>0$ and an integer $k_{0}\in {\mathbb N}$ such that, for every integers $k,\ k'\geq k_{0},$ we have 
\begin{align*}
\sup_{x\in B}\Bigl[ \phi\Bigl( \bigl| F_{1}(\cdot+{\bf b}_{k};x)-F_{1}(\cdot+{\bf b}_{k'};x)\bigr| \Bigr)\Bigr]_{L^{p}(\Lambda_{t})} < Mt^{n/p},\quad t\geq t_{0}.
\end{align*}
Further on, there exists a finite real constant $c_{\alpha}>0$ such that, for every integers $k,\ k'\geq k_{0},$ we have:
\begin{align*}
\Bigl[ &\phi\bigl(|F_{1}(\cdot+{\bf b}_{k};x)|\bigr)-\phi\bigl(|F_{1}(\cdot+{\bf b}_{k'};x)|\bigr)\Bigr]_{L^{p}(\Lambda_{t})}
\\& \leq c_{\alpha}\Bigl[ \phi\Bigl(\Bigr| |F_{1}(\cdot+{\bf b}_{k};x)|-|F_{1}(\cdot+{\bf b}_{k'};x)|\Bigr|\Bigr)\Bigr]_{L^{p}(\Lambda_{t})}
\\ & \leq c_{\alpha} \Bigl[ \phi\Bigl(\bigl|F_{1}(\cdot+{\bf b}_{k};x)-F_{1}(\cdot+{\bf b}_{k'};x)\bigr|\Bigr)\Bigr]_{L^{p}(\Lambda_{t})}\leq c_{\alpha}Mt^{n/p},\quad t\geq t_{0},\ x\in B.
\end{align*}
This simply implies \eqref{qwerqwer} because
\begin{align*}
\sup_{x\in B}&\Bigl[\phi\bigl(|F_{1}(\cdot+{\bf b}_{k};x)|\bigr)\Bigr]_{L^{p}(\Lambda_{t})}
\\& \leq \sup_{x\in B}\Bigl[\phi\bigl(|F_{1}(\cdot+{\bf b}_{k_{0}};x)|\bigr)\Bigr]_{L^{p}(\Lambda_{t})}+c_{\alpha}Mt^{n/p},\quad t\geq t_{0},\ k\geq k_{0}.
\end{align*}
The estimate \eqref{qwerqwer1} can be proved analogously, finishing the proof.
\end{proof}

Concerning the asumptions on Besicovitch boundedness used in the formulation of Proposition \ref{sdffds1}, we would like to recall that A. Haraux and P. Souplet have proved (see \cite[Theorem 1.1]{haraux}) that the
function $f: {\mathbb R}\rightarrow {\mathbb R},$ given by
\begin{align}\label{voja}
f(t):=\sum_{n=1}^{\infty}\frac{1}{n}\sin^{2}\Bigl(\frac{t}{2^{n}} \Bigr)\, dt,\quad t\in {\mathbb R},
\end{align}
is uniformly continuous, uniformly recurrent (the sequence $(\alpha_{k}\equiv 2^{k}\pi)_{k\in {\mathbb N}}$) can be chosen in definition of uniform recurrence) and Besicovitch  unbounded; see \cite{nova-selected} for the notion. Let ${\rm R}$ denote the collection consisting of the seqeunce $(\alpha_{k})_{k\in {\mathbb N}}$ and all its subsequences. Then the function $f(\cdot)$ is 
Besicovitch$-({\mathrm R},x,t^{-1/p})-B^{p}$-normal but not  Besicovitch-$(p,x,t^{1/p})$-bounded for any finite exponent $p\geq 1.$ Furthermore, we have the following:

\begin{example}\label{weyl-auto}
Suppose that $p\in [1,\infty),$ $\sigma \in (0,1),$ $F(x):=|x|^{\sigma},$ $x\in {\mathbb R},$ and $a>1-(1-\sigma)p>0.$ Then we know that the function $F(\cdot)$ is not Besicovitch-$p$-bounded and that, for every $t\in {\mathbb R}$ and $\omega\in {\mathbb R},$ we have:
\begin{align}\label{838}
\lim_{l\rightarrow +\infty}l^{-a}\int^{l}_{-l}\Bigl| \bigl| x+t+\omega \bigr|^{\sigma}-\bigl| x+t \bigr|^{\sigma} \Bigr|^{p}\, dx=0;
\end{align}
see \cite[Theorem 8.3.8]{nova-selected} and its proof. Let ${\mathrm R}$ denote the collection of all sequences in ${\mathbb R}$ and let ${\rm F}(t)\equiv t^{-a/p}.$ Then the limit equality \eqref{838} simply implies that the function $F(\cdot)$ is Besicovitch$-({\mathrm R}, x,{\rm F})-B^{p}$-normal. Hence, the usual Besicovitch-$p$-normality of a function $F(\cdot)$ does not imply its Besicovitch-$p$-boundedness as well.
\end{example}

Now we would like to recall that any Doss-$p$-almost periodic function $F : [0,\infty) \rightarrow Y$, where $p\in [1,\infty),$ can be extended to a Doss-$p$-almost periodic function $\tilde{F} : {\mathbb R} \rightarrow Y$ defined by $\tilde{F}(t):=0,$ $t<0$ (cf. \cite{doss-rn} for the notion used in this paragraph).
A similar type of extension can be achieved in a much more general situation; for example, we know that, under certain reasonable conditions, any
Doss-$(p,\phi,{\rm F},{\mathcal B},\Lambda',\rho)$-almost periodic function $F : \Lambda \times X \rightarrow Y$ can be extended to a 
Doss-$(p,\phi,{\rm F},{\mathcal B},\Lambda',\rho_{1})$-almost periodic function $\tilde{F} : {\mathbb R}^{n} \times X \rightarrow Y,$ defined by $\tilde{F}({\bf t}):=0,$ $t\notin \Lambda,$ $\tilde{F}({\bf t}):=F({\bf t}),$ $t\in \Lambda,$
with $\rho_{1}:=\rho \cup \{(0,0)\}$ (the corresponding analysis from \cite{doss-rn} contains small typographical errors that will be corrected in our forthcoming monograph \cite{advances}). 

We would like to emphasize that a similar analysis cannot be carried out for Besicovitch almost periodic type functions. We close this section by introducing the following notion and raising the following issue:

\begin{defn}\label{bes-ext}
Let $\emptyset \neq \Lambda \subseteq {\mathbb R}^{n}$, and let ${\mathcal C}_{\Lambda}=e-({\mathcal B},\phi,{\rm F})-B^{p(\cdot)}(\Lambda  : Y)$ or ${\mathcal C}_{\Lambda}$ be the class consisting of all Besicovitch$-({\mathrm R}, {\mathcal B},\phi,{\rm F})-B^{p(\cdot)}$-normal functions.
Then we say that the set $\Lambda$ is admissible with respect to the class ${\mathcal C}_{\Lambda}$ if and only if for any complex Banach space $Y$ and for any function $F : \Lambda \rightarrow Y$ 
there exists a function $\tilde{F}\in {\mathcal C}_{{\mathbb R}^{n}}$ such that $\tilde{F}({\bf t})=F({\bf t})$ for all ${\bf t}\in \Lambda.$ 
\end{defn}\index{set!admissible with respect to the class ${\mathcal C}_{\Lambda}$}

\noindent {\bf Problem.} It is still not known whether the set $[0,\infty)\subseteq {\mathbb R}$ is admissible with respect to the class of Besicovitch-$p$-almost periodic functions, i.e., whether a Besicovitch-$p$-almost periodic function $f : [0,\infty) \rightarrow Y$ can be extended to a Besicovitch-$p$-almost periodic function $\tilde{f} : {\mathbb R} \rightarrow Y$ defined on the whole real line ($1\leq p<\infty$). We would like to ask here a more general question: Is it true that a convex polyhedral
$
\Lambda
$ in ${\mathbb R}^{n}$
is admissible with respect to the class of multi-dimensional Besicovitch-$p$-almost periodic functions (Besicovitch-$p$-normal functions)?\vspace{0.1cm}

\section{Besicovitch-Doss almost periodicity}\label{novs}

In this section, we discuss and reexamine several structural results established by R. Doss in \cite{doss}-\cite{doss1}. We work in the multi-dimensional setting here, considering especially the following conditions: 

\begin{itemize}  
\item[(A)] For every $B\in {\mathcal B}$ and $a\in \Lambda'',$ there exists a function $F^{(a)}_{B} : \Lambda \times X \rightarrow Y$ such that $F^{(a)}_{B}(\cdot;x)$ is $a$-periodic for every fixed element $x\in B$, i.e., $F^{(a)}_{B}({\bf t}+a;x)=F^{(a)}_{B}({\bf t};x)$ for all ${\bf t}\in \Lambda,$ $x\in B,$ $\|F^{(a)}_{B}({\bf t};x)\|_{Y}\in L^{p({\bf t})}(\Lambda_{t})$ for all $t>0,$ $x\in B$, and
\begin{align}\label{maleni}
\lim_{k\rightarrow +\infty}\limsup_{t\rightarrow +\infty}{\rm F}(t)\sup_{x\in B}\Biggl[\phi\Biggl(\Biggr\| \frac{1}{k}\sum_{j=0}^{k-1}F({\bf t}+ja;x)-F^{(a)}_{B}({\bf t};x)\Biggr\|_{Y}\Biggr)\Biggr]_{L^{p({\bf t})}(\Lambda_{t})}=0.
\end{align}
\item[(A)$_{\infty}$] For every $B\in {\mathcal B}$ and $a\in \Lambda'',$ there exists a function $F^{(a)}_{B} : \Lambda \times X \rightarrow Y$ such that $F^{(a)}_{B}(\cdot;x)$ is $a$-periodic for every fixed element $x\in B$, $\|F^{(a)}_{B}(\cdot;x)\|_{Y}\in L^{\infty}({\mathbb R}^{n})$ for all $x\in B$, and \eqref{maleni} holds.
\item[(AS)] For every $B\in {\mathcal B}$ and $a=(a_{1},a_{2},...,a_{n})\in \Lambda''$ such that 
$a_{j}e_{j}\in \Lambda''$ for all $j\in {\mathbb N}_{n},$
there exists a function $F^{(a)}_{B} : \Lambda \times X \rightarrow Y$ such that $F^{(a)}_{B}(\cdot;x)$ is $(a_{j})_{j\in {\mathbb N}_{n}}$-periodic for every fixed element $x\in B$, i.e., $F^{(a)}_{B}({\bf t}+a_{j}e_{j};x)=F^{(a)}_{B}({\bf t};x)$ for all ${\bf t}\in \Lambda,$ $x\in B,$ $j\in {\mathbb N}_{n},$ $\|F^{(a)}_{B}({\bf t};x)\|_{Y}\in L^{p({\bf t})}(\Lambda_{t})$ for all $t>0,$ $x\in B$, and \eqref{maleni} holds.
\end{itemize}

It is clear that (AS) implies (A) as well as that both conditions are equivalent in the one-dimensional setting; it is also clear that (A)$_{\infty}$ implies (A). Further on,
for every Lebesgue measurable set $E\subseteq {\mathbb R}^{n}$ and for every Lebesgue measurable function $F: {\mathbb R}^{n} \rightarrow {\mathbb C}$, we set
$$
\nu(E):=\limsup_{t\rightarrow +\infty}\frac{1}{t^{n/p}}\int_{|{\bf t}|\leq t}\chi_{E}({\bf t})\, d{\bf t} \mbox{ and } \overline{{\mathcal M}}^{E}[F]:=\limsup_{t\rightarrow +\infty}\frac{1}{t^{n/p}}\int_{|{\bf t}|\leq t}F({\bf t})\chi_{E}({\bf t})\, d{\bf t}.
$$
If 
$$
\lim_{t\rightarrow +\infty}\frac{1}{t^{n/p}}\int_{|{\bf t}|\leq t}F({\bf t})\chi_{E}({\bf t})\, d{\bf t}
$$ 
exists in ${\mathbb C},$ then we denote this quantity by ${\mathcal M}^{E}[F].$

Suppose now that the function $F^{(a)}: {\mathbb R}^{n} \rightarrow {\mathbb C}$ is $(a_{1},a_{2},...,a_{n})$-periodic. Then we can find a sequence of infinitely differentiable functions $(\varphi_{k})_{k\in {\mathbb N}}$ with compact support in $S=[0,|a_{1}|] \times ...\times [0,|a_{n}|]$ such that $\varphi_{k} \rightarrow F^{(a)}$ as $k\rightarrow +\infty,$ in $L^{p}(S).$ After that, we extend  $\varphi_{k}(\cdot)$ to an  $(a_{1},a_{2},...,a_{n})$-periodic function $\widetilde{\varphi_{k}}(\cdot)$ defined on the whole space ${\mathbb R}^{n}$ in the usual way. Then it is very simple to prove that 
\begin{align}\label{appr}
\lim_{k\rightarrow +\infty}\limsup_{t\rightarrow +\infty}\Biggl[t^{-(n/p)}\bigl\| F^{(a)}-\widetilde{\varphi_{k}}\bigr\|_{L^{p}(({\mathbb R}^{n})_{t})}\Biggr]=0;
\end{align}
cf. also \cite[p. 483, l. 7-l. 9]{doss}.
Keeping this observation in mind, the following result can be deduced, in a plus-minus technical way, following the argumentation contained in the proofs of \cite[Proposition 1, Proposition 3, Corollary, Lemma 2]{doss} (cf. also Lemma \ref{lemara}, which is needed for the proof of (ii)):

\begin{thm}\label{paramet}
\begin{itemize}
\item[(i)]
Suppose that $p\in [1,\infty),$ $q\in (1,\infty],$ $1/p+1/q=1,$
the function $F : \Lambda \rightarrow Y$ satisfies that $\| F(\cdot)\|_{Y}\in L^{p}(\Lambda_{t})$ for all $t>0$, as well as that $\| F(\cdot +\tau )-F(\cdot)\|_{Y}\in L^{p}(\Lambda_{t})$ for all $t>0$ and $\tau \in \Lambda''.$ Suppose, further, that the function $F(\cdot)$ is Besicovitch-$(p,x,{\rm F}_{1})$-bounded and Besicovitch-$(p,x,{\rm F}_{1},\Lambda'')$-continuous as well as that
condition \emph{(III)} holds, and the set $\Lambda'' \cap {\mathbb Q}^{n}$ is dense in $\Lambda''.$ If
the function $G : \Lambda \rightarrow {\mathbb C}$ satisfies that $ G(\cdot)\in L^{q}(\Lambda_{t})$ for all $t>0,$ and $G(\cdot)$ is Besicovitch-$(q,x,{\rm F}_{2})$-bounded, then for each sequence $(L_{m})_{m\in {\mathbb N}}$ there exists a subsequence $(T_{m})_{m\in {\mathbb N}}$ of $(L_{m})_{m\in {\mathbb N}}$ such that the function
$$
H(\tau):=\lim_{m\rightarrow +\infty}{\rm F}_{1}\bigl( T_{m} \bigr){\rm F}_{2}\bigl( T_{m} \bigr)\int_{\Lambda_{T_{m}}}G({\bf s})F({\bf s}+\tau)\, d{\bf s},\quad \tau \in \Lambda''
$$
is well-defined and bounded.
Furthermore, if the function $F(\cdot)$ is Doss-$(p,x,{\rm F}_{1},\Lambda')$-almost periodic (Doss-$(p,x,{\rm F}_{1},\Lambda')$-uniformly recurrent) and $\Lambda'+\Lambda''\subseteq \Lambda'',$ then the function $H(\cdot)$ is Bohr $\Lambda'$-almost periodic ($\Lambda'$-uniformly recurrent).
\item[(ii)] Suppose that $p\in [1,\infty),$ $q\in (1,\infty],$ $1/p+1/q=1,$ the assumptions in \emph{(i)} hold for the function $F : {\mathbb R}^{n} \rightarrow {\mathbb C},$ with $\Lambda=\Lambda'=\Lambda''={\mathbb R}^{n},$ ${\rm F}_{1}( t)={\rm F}_{2}(t)=t^{-(n/p)},$ $t\in {\mathbb R}^{n},$ and $F(\cdot)$ is Doss-$(p,x,t^{-(n/p)})$-almost periodic. If condition \emph{(A)$_{\infty}$} or \emph{(AS)} holds with $X=\{0\},$ $\phi(x)\equiv x$ and $p(\cdot)\equiv p,$ then, for every real number $\epsilon>0,$ there exists a finite real number $\delta>0$ such that, for every Lebesgue measurable set $E\subseteq {\mathbb R}^{n},$ the assumption $\nu(E)<\delta$ implies $\overline{{\mathcal M}}^{E}[|F|_{Y}^{p}]<\epsilon.$
\item[(iii)] Let the assumptions of \emph{(ii)} hold, and let for each $N>0$ the function $F_{N} : {\mathbb R}^{n} \rightarrow {\mathbb C}$ be defined by $F_{N}({\bf t}):=F({\bf t})$, if $|F({\bf t})|\leq N,$ and $F_{N}({\bf t}):=Ne^{i\arg(F({\bf t}))}$, if $|F({\bf t})|> N.$ Then $\lim_{N\rightarrow +\infty}\overline{{\mathcal M}}^{{\mathbb R}^{n}}[F-F_{N}]=0.$
\item[(iv)] Suppose that the assumptions in \emph{(i)} hold for the function $F : {\mathbb R}^{n} \rightarrow {\mathbb C},$ with $p=1,$ $\Lambda=\Lambda'=\Lambda''={\mathbb R}^{n},$ ${\rm F}_{1}( t)={\rm F}_{2}(t)=t^{-n},$ $t\in {\mathbb R}^{n},$ and $F(\cdot)$ is Doss-$(1,x,t^{-n})$-almost periodic. If condition \emph{(A)$_{\infty}$} or \emph{(AS)} holds with $X=\{0\},$ $\phi(x)\equiv x$ and $p(\cdot)\equiv 1,$ then for each $a\in {\mathbb R}^{n}$ we have that ${\mathcal M}^{{\mathbb R}^{n}}[F]$ exists in ${\mathbb C}$ and ${\mathcal M}^{{\mathbb R}^{n}}[F]={\mathcal M}^{{\mathbb R}^{n}}[F^{(a)}].$
\end{itemize} 
\end{thm}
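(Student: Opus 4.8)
The plan is to prove the four assertions in order, using (i) as a pairing (``correlation'') construction, reducing (ii) to bounded periodic approximants via the ergodic hypothesis, and obtaining (iii)--(iv) as consequences of the absolute continuity in (ii). For (i), fix $\tau\in\Lambda''$; the H\"older inequality (Lemma \ref{aux}(i)) gives
\[
{\rm F}_{1}(T_{m}){\rm F}_{2}(T_{m})\Bigl|\int_{\Lambda_{T_{m}}}G({\bf s})F({\bf s}+\tau)\,d{\bf s}\Bigr|\le 2\bigl[{\rm F}_{2}(T_{m})\|G\|_{L^{q}(\Lambda_{T_{m}})}\bigr]\bigl[{\rm F}_{1}(T_{m})\|F(\cdot+\tau)\|_{L^{p}(\Lambda_{T_{m}})}\bigr],
\]
whose right-hand side is bounded uniformly in $m$ and $\tau$ because $G$ is Besicovitch-$(q,x,{\rm F}_{2})$-bounded, $F$ is Besicovitch-$(p,x,{\rm F}_{1})$-bounded, and the shift by $\tau$ is absorbed through condition (III). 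This produces an equibounded family of scalars, so a Cantor diagonal argument over the countable dense set $\Lambda''\cap{\mathbb Q}^{n}$ extracts a subsequence $(T_{m})$ along which the limit exists at every rational $\tau$; the Besicovitch-$(p,x,{\rm F}_{1},\Lambda'')$-continuity of $F$ with the same H\"older estimate then makes the resulting map uniformly continuous on $\Lambda''\cap{\mathbb Q}^{n}$, hence extends it to a bounded $H(\cdot)$ on $\Lambda''$. For the almost periodicity, given an $\epsilon$-almost period $\sigma$ from the Doss-$(p,x,{\rm F}_{1},\Lambda')$-almost periodicity of $F$, the difference $H(\tau+\sigma)-H(\tau)$ is the limit of ${\rm F}_{1}(T_{m}){\rm F}_{2}(T_{m})\int_{\Lambda_{T_{m}}}G({\bf s})[F({\bf s}+\tau+\sigma)-F({\bf s}+\tau)]\,d{\bf s}$; bounding this by H\"older and using that $\limsup_{t}{\rm F}_{1}(t)\|F(\cdot+\sigma)-F(\cdot)\|_{L^{p}(\Lambda_{t})}<\epsilon$ makes $\sigma$ an almost period of $H$ up to a fixed multiple of $\epsilon$, giving Bohr $\Lambda'$-almost periodicity (the uniformly recurrent case is identical with a sequence $(\sigma_{k})$).

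Part (ii) is the main obstacle. Condition (A)$_{\infty}$ (or (AS)) supplies, for each admissible direction $a$, a genuinely bounded $a$-periodic function $F^{(a)}$ to which the Ces\`aro averages converge in the Besicovitch mean, cf.\ \eqref{maleni}. For a bounded periodic function the absolute continuity is immediate,
\[
\overline{{\mathcal M}}^{E}\bigl[\,|F^{(a)}|^{p}\,\bigr]\le \|F^{(a)}\|_{L^{\infty}}^{p}\,\nu(E),
\]
so the whole difficulty is to transfer this from $F^{(a)}$ to $|F|^{p}$. Here I would combine the $p$-mean approximation of the periodic limit by smooth periodic functions from \eqref{appr} with the ergodic convergence in \eqref{maleni} and invoke Lemma \ref{lemara} to annihilate the resulting Bohr-almost-periodic defect: feeding the correlation function $H$ of part (i), built from $F$ against a suitable test density $G$, into Lemma \ref{lemara}, the vanishing of all its Ces\`aro averages --- which is exactly what (A)/(AS) guarantees --- forces $H\equiv 0$, pinning the $E$-mean of $|F|^{p}$ to that of the bounded limit up to an error controlled uniformly by $\nu(E)$. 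Splitting $\overline{{\mathcal M}}^{E}[|F|^{p}]$ by the triangle inequality and estimating the $F-F^{(a)}$ contribution by the full Besicovitch mean (independent of $E$ since $\chi_{E}\le 1$) then yields, for each $\epsilon$, a threshold $\delta$ with $\nu(E)<\delta\Rightarrow\overline{{\mathcal M}}^{E}[|F|^{p}]<\epsilon$. This rigidity step is precisely where the passage from $n=1$ to general $n$ bites: (AS) provides periodicity only coordinate-by-coordinate, so one must iterate the one-direction ergodic argument across $e_{1},\dots,e_{n}$ and apply Lemma \ref{lemara} repeatedly to guarantee that the iterated periodic limits are consistent.

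Finally, (iii) and (iv) follow by standard reductions. For (iii), $F-F_{N}$ is supported on $E_{N}:=\{{\bf t}:|F({\bf t})|>N\}$ with $|F-F_{N}|\le|F|\chi_{E_{N}}$; Chebyshev's inequality together with the Besicovitch boundedness of $F$ makes $\nu(E_{N})$ arbitrarily small as $N\to+\infty$, and then (ii) applied with $E=E_{N}$ --- preceded, when $p>1$, by one H\"older step passing from $|F|$ on $E_{N}$ to $|F|^{p}$ --- forces $\overline{{\mathcal M}}^{{\mathbb R}^{n}}[F-F_{N}]\to 0$. For (iv) with $p=1$, translation invariance of the mean gives ${\mathcal M}^{{\mathbb R}^{n}}[F(\cdot+ja)]={\mathcal M}^{{\mathbb R}^{n}}[F]$ for every $j$, so the Ces\`aro averages in \eqref{maleni} all share the mean of $F$; the mean of the bounded periodic limit $F^{(a)}$ plainly exists as its average over one period, and the absolute continuity from (ii)--(iii) upgrades the $\limsup$ defining ${\mathcal M}^{{\mathbb R}^{n}}[F]$ to a genuine limit by controlling the tails through truncation, whence ${\mathcal M}^{{\mathbb R}^{n}}[F]$ exists and equals ${\mathcal M}^{{\mathbb R}^{n}}[F^{(a)}]$.
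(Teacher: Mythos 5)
For part (i) you follow exactly the route the paper intends: the paper gives no self-contained proof of Theorem \ref{paramet}, but derives it by adapting Doss's one-dimensional arguments in \cite{doss} (Propositions 1 and 3 yield (i); the Corollary and Lemma 2, together with \eqref{appr} and Lemma \ref{lemara}, yield (ii)--(iv)), and your H\"older bound, the diagonal extraction over $\Lambda''\cap{\mathbb Q}^{n}$, the extension by Besicovitch continuity and the transfer of Doss almost periods to $H$ are precisely that scheme; your reductions of (iii) and (iv) to (ii) also match. The genuine problem is part (ii), at exactly the step for which the paper says Lemma \ref{lemara} is needed: you claim that (A)$_{\infty}$/(AS) guarantees ``the vanishing of all Ces\`aro averages'' of the correlation function $H$, and that Lemma \ref{lemara} then forces $H\equiv 0$. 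Both claims are false, and the step as written would fail.

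What \eqref{maleni} actually yields, after one more application of your H\"older estimate, is that $\frac{1}{k}\sum_{j=0}^{k-1}H(\tau+ja)$ converges, uniformly in $\tau$, to the correlation $H_{a}(\tau)$ of $G$ with the shifted periodic limit $F^{(a)}(\cdot+\tau)$; this is an $a$-periodic function of $\tau$, not zero. Moreover, $H\equiv 0$ cannot be the conclusion you want: with the natural choice $G=\chi_{E}|F|^{p-1}e^{-i\arg F}$ one has, up to normalization, that $H(0)$ equals the $E$-mean of $|F|^{p}$, so $H\equiv 0$ for every measurable $E$ would force the Besicovitch mean of $|F|^{p}$ itself to vanish --- your mechanism, were it valid, would prove far too much. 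Doss's argument instead uses the rigidity lemma to compare $H$ with the periodic correlations $H_{a}$ (for instance through differences $H-H_{a}$, whose Ces\`aro averages along the direction $a$ do vanish uniformly because $H_{a}$ is $a$-periodic), and the quantitative $\epsilon$--$\delta$ conclusion is then extracted from the estimate $|H_{a}(\tau)|\leq \|F^{(a)}\|_{L^{\infty}}\,\overline{{\mathcal M}}[|G|]$ combined with the H\"older bound $\overline{{\mathcal M}}[|G|]\leq \nu(E)^{1/p}\bigl(\overline{{\mathcal M}}^{E}[|F|^{p}]\bigr)^{1/q}$; this is where the boundedness of $F^{(a)}$ under (A)$_{\infty}$ (or the bounded periodic approximants $\widetilde{\varphi_{k}}$ of \eqref{appr} under (AS)) and the smallness of $\nu(E)$ finally enter, and closing the resulting inequality for $\overline{{\mathcal M}}^{E}[|F|^{p}]$ is what produces $\delta$. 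None of this quantitative machinery appears in your sketch: you never exhibit $\delta$, and the one sentence standing in its place asserts something false. Note also that Lemma \ref{lemara} demands vanishing Ces\`aro averages in \emph{every} direction, whereas $H-H_{a}$ is controlled only along its own direction $a$; this is the real multidimensional obstruction (the reason the paper must assume (A)$_{\infty}$ or (AS) rather than (A)), and your closing remark about iterating over $e_{1},\dots,e_{n}$ names it but does not resolve it. Since your (iii) and (iv) are deduced from (ii), the gap propagates to them.
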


Before proceeding further, let us note that it is not clear how we can extend the statement (ii) to the vector-valued functions. It is also worth noting that a serious difficulty in our analysis of the multi-dimensional case presents the fact that it is not clear whether we can further generalize the above-mentioned statement by using condition (A) in place of (A)$_{\infty}$ or (AS). Concerning conditions (A), (A)$_{\infty}$, (AS) and the equation \eqref{appr}, we would like to present the following example:

\begin{example}\label{bencina}
\begin{itemize}
\item [(i)] (see also \cite[Example 7.2.2]{nova-selected}) Suppose that $F_{0} : \{ (x,y)\in {\mathbb R}^{2} : 0\leq x+y\leq 2 \} \rightarrow [0,\infty)$ be any continuous function such that the following conditions hold:
\begin{itemize}
\item [(a)] $F_{0}(x,y)=F_{0}(x+1,y+1)$ for every $(x,y)\in {\mathbb R}^{2}$ such that $x+y=0;$
\item[(b)] Let $P_{k}=A_{k}B_{k}C_{k}D_{k}$ be the rectangle in ${\mathbb R}^{2}$ with vertices $A_{k}=(4k-(2/3),(2/3)-4k),$ $B_{k}=(4k-(1/3),(1/3)-4k),$ $C_{k}=(4k+(2/3),(4/3)-4k)$ and $D_{k}=(4k+(1/3),(5/3)-4k),$ 
for each integer $k\in {\mathbb Z}.$ We have $F_{0}(x,y)\geq 2^{|k|}$ for all integers $k\in {\mathbb Z}$ and $(x,y)\in P_{k}.$ 
\end{itemize}
We extend the function $F_{0}(\cdot;\cdot)$ to a continuous $(1,1)$-periodic function $F : {\mathbb R}^{2} \rightarrow [0,\infty)$ in the obvious way. Then the function $F(\cdot,\cdot)$ is not Besicovitch almost periodic since it is not Besicovitch bounded; this follows from the following simple estimates:
\begin{align*}
\int_{\Lambda_{8k\sqrt{2}}}F(x,y)\, dx\, dy \geq 16k\sum_{l=0}^{4k}\frac{\sqrt{2}}{3}2^{l}\geq \frac{16k\sqrt{2}}{3}\bigl(2^{4k+1}-1\bigr),\quad k\in {\mathbb N}.
\end{align*}
Furthermore, there do not exist a finite real constant $M>0$ and an essentially bounded function $G : {\mathbb R}^{2} \rightarrow {\mathbb C}$ such that
\begin{align*}
\limsup_{t\rightarrow +\infty}\Biggl[t^{-2}\int_{|{\bf t}|\leq t}| F({\bf t})-G({\bf t})|\, d{\bf t}\Biggr]\leq M;
\end{align*}
cf. also \eqref{appr}.
\item[(ii)] Set $P(x,y):=e^{i[\sqrt{2}x+y]}+e^{i[2x+y]},$ $x,\ y\in {\mathbb R},$ $a_{1}:=\pi(2+\sqrt{2}),$ $a_{2}:=2\pi(1-\sqrt{2})$ and $a:=(a_{1},a_{2}).$ Then there is no continuous $(a_{j})_{j\in {\mathbb N}_{2}}$-periodic function $F^{(a_{1},a_{2})}(x,y)$ such that \eqref{maleni} holds with $\phi(x)\equiv x,$ $p(\cdot)\equiv 1$ and ${\rm F}(t)\equiv t^{-2}$ ($X=\{0\}$). In actual fact, we have $\sqrt{2}a_{1}+a_{2}=2\pi,$ $2a_{1}+a_{2}=2\pi$ and the validity of \eqref{maleni} would imply
\begin{align}\label{trazimo}
\limsup_{t\rightarrow +\infty}\Biggl[t^{-2}\int_{|(x,y)|\leq t}\Bigl| e^{i[\sqrt{2}x+y]}+e^{i[2x+y]}-F^{(a_{1},a_{2})}(x,y) \Bigr|\, dx\, dy\Biggr]=0.
\end{align}
To see that \eqref{trazimo} cannot be true, it suffices to observe that there exists a real number $\epsilon_{0}:=\min_{(x,y)\in [0,a_{1}] \times [0,-a_{2}]}|e^{i[\sqrt{2}x+y]}+e^{i[2x+y]}-F^{(a_{1},a_{2})}(x,y)|>0$ such that 
$$
\int_{(x,y)\in [0,t]^{2}}\Bigl| e^{i[\sqrt{2}x+y]}+e^{i[2x+y]}-F^{(a_{1},a_{2})}(x,y) \Bigr|\, dx\, dy \geq \frac{t^{2}}{\lfloor a_{1} \rfloor \lfloor |a_{2}| \rfloor}\epsilon_{0}.
$$
\end{itemize}
\end{example}

If $F: \Lambda \times X \rightarrow Y$, $G: \Lambda \times X \rightarrow Y$ and $B\in {\mathcal B},$ then we set
\begin{align*}
{\mathcal M}_{B}^{\phi,{\rm F}}(F,G):=\limsup_{t\rightarrow +\infty}{\rm F}(t)\sup_{x\in B}\Bigl[\phi\Bigl(\bigl\|F({\bf t};x)-G({\bf t};x)\bigr\|_{Y}\Bigr)\Bigr]_{L^{p({\bf t})}(\Lambda_{t})}.
\end{align*}
Then the quantity ${\mathcal M}_{B}^{\phi,{\rm F}}(F,G)$ always exists in $[0,+\infty].$
The subsequent result states that, under certain reasonable assumptions on the functions $\phi(\cdot)$ and ${\rm F}(\cdot),$ any function $F\in e-({\mathcal B},\phi,{\rm F})-B^{p(\cdot)}(\Lambda \times X : Y)$ satisfies condition (A):

\begin{prop}\label{raj}
Suppose that  $F: \Lambda \times X \rightarrow Y$, $F\in e-({\mathcal B},\phi,{\rm F})-B^{p(\cdot)}(\Lambda \times X : Y),$ \emph{(I)-(III)} and the following conditions hold:
\begin{itemize}
\item[(IV)] If $B\in {\mathcal B},$ $a\in \Lambda''$,
and
$(F_{k}: \Lambda \times X \rightarrow Y)_{k\in {\mathbb N}}$ is any sequence of functions which satisfies that $F_{k}({\bf t}+a;x)=F_{k}({\bf t};x),$  ${\bf t}\in \Lambda,$ $x\in B$ and for each $\epsilon>0$ there exists $k_{0}\in {\mathbb N}$ such that ${\mathcal M}_{B}^{\phi,{\rm F}}(F_{k},F_{k'})<\epsilon$ for all integers $k,\ k'\geq k_{0},$ then there exists a function $F: \Lambda \times X \rightarrow Y$ such that 
$F({\bf t}+a;x)=F({\bf t};x),$  ${\bf t}\in \Lambda,$ $x\in B$
and
$\lim_{k\rightarrow +\infty}{\mathcal M}_{B}^{\phi,{\rm F}}(F_{k},F)=0$  (here we assume that for each element $x\in X$ the functions $F_{k}(\cdot;x)$ and $F(\cdot;x)$ are Lebesgue measurable ($k\in {\mathbb N}$)).
\item[(V)] The collection ${\mathcal B}$ consists of bounded subsets of $X$.
\end{itemize}
Then the function $F(\cdot;\cdot)$ satisfies condition \emph{(A).}
\end{prop}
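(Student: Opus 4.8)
The plan is to fix $B\in{\mathcal B}$ (bounded, by (V)) and $a\in\Lambda''$, and to take the sequence $(P_{k})$ of trigonometric polynomials with ${\mathcal M}_{B}^{\phi,{\rm F}}(F,P_{k})\to 0$ furnished by $F\in e-({\mathcal B},\phi,{\rm F})-B^{p(\cdot)}(\Lambda\times X:Y)$. For a function $G:\Lambda\times X\to Y$ and $N\in{\mathbb N}$ I write $(M_{N}G)({\bf t};x):=\frac1N\sum_{j=0}^{N-1}G({\bf t}+ja;x)$, observing that $ja\in\Lambda''$ for every $j$. Writing $P_{k}(\cdot;x)=\sum_{i}e^{i\langle\lambda_{i},\cdot\rangle}c_{i}(x)$, I would let $P_{k}^{(a)}$ be the sum of exactly those terms with $\langle\lambda_{i},a\rangle\in 2\pi{\mathbb Z}$; this $P_{k}^{(a)}$ is $a$-periodic, and since $\frac1N\sum_{j=0}^{N-1}e^{ij\langle\lambda_{i},a\rangle}\to 0$ for the remaining frequencies, $M_{N}P_{k}\to P_{k}^{(a)}$ uniformly on $\Lambda\times B$ as $N\to+\infty$ for each fixed $k$. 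The strategy is then to produce $F_{B}^{(a)}$ as the ${\mathcal M}_{B}^{\phi,{\rm F}}$-limit of the $a$-periodic functions $P_{k}^{(a)}$ (via condition (IV)) and afterwards to verify \eqref{maleni}.

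The computational core will be the averaging estimate
\begin{align*}
{\mathcal M}_{B}^{\phi,{\rm F}}\bigl(M_{N}G,M_{N}H\bigr)\le D\cdot{\mathcal M}_{B}^{\phi,{\rm F}}(G,H),\qquad N\in{\mathbb N},
\end{align*}
valid for all $G,H$ with $D=\sup_{m}[m\varphi(1/m)]<+\infty$ from (II). To obtain it I would bound $\phi(\|M_{N}(G-H)({\bf t};x)\|_{Y})\le\varphi(1/N)\sum_{j=0}^{N-1}\phi(\|(G-H)({\bf t}+ja;x)\|_{Y})$, using $\phi(xy)\le\phi(x)\varphi(y)$ and the ordinary subadditivity of $\phi$ (available since $c\le 1$), and then pass to $L^{p}(\Lambda_{t})$-norms. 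For each $j$ the substitution ${\bf s}={\bf t}+ja$ carries $\Lambda_{t}$ into $\Lambda_{t+j|a|}$, so the second part of (III), namely $\limsup_{t}[{\rm F}(t)/{\rm F}(t+j|a|)]\le 1$, bounds the $j$th term by ${\mathcal M}_{B}^{\phi,{\rm F}}(G,H)$; summing the $N$ terms produces the factor $N\varphi(1/N)\le D$. Separately, for each fixed $k$ the uniform convergence $M_{N}P_{k}\to P_{k}^{(a)}$, together with monotonicity and continuity at $0$ of $\phi$ and the bound $\limsup_{t}[{\rm F}(t)m(\Lambda_{t})^{1/p}]<+\infty$ from (III), gives ${\mathcal M}_{B}^{\phi,{\rm F}}(M_{N}P_{k},P_{k}^{(a)})\to 0$ as $N\to+\infty$.

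With these two facts and the quasi-triangle inequality ${\mathcal M}_{B}^{\phi,{\rm F}}(F_{1},F_{3})\le{\mathcal M}_{B}^{\phi,{\rm F}}(F_{1},F_{2})+{\mathcal M}_{B}^{\phi,{\rm F}}(F_{2},F_{3})$ (again using $c\le 1$), I would show $(P_{k}^{(a)})$ is Cauchy: inserting $M_{N}P_{k}$, $M_{N}F$, $M_{N}P_{k'}$ between $P_{k}^{(a)}$ and $P_{k'}^{(a)}$ and letting $N\to+\infty$ yields
\begin{align*}
{\mathcal M}_{B}^{\phi,{\rm F}}\bigl(P_{k}^{(a)},P_{k'}^{(a)}\bigr)\le D\bigl[{\mathcal M}_{B}^{\phi,{\rm F}}(F,P_{k})+{\mathcal M}_{B}^{\phi,{\rm F}}(F,P_{k'})\bigr]\longrightarrow 0.
\end{align*}
Since each $P_{k}^{(a)}$ is $a$-periodic and measurable, condition (IV) then supplies an $a$-periodic $F_{B}^{(a)}$ with ${\mathcal M}_{B}^{\phi,{\rm F}}(P_{k}^{(a)},F_{B}^{(a)})\to 0$, its local $p$-integrability following from the Besicovitch-$(p,\phi,{\rm F},{\mathcal B})$-boundedness guaranteed by Proposition \ref{anat}(i).

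The remaining task, and the main obstacle, is \eqref{maleni}, i.e. ${\mathcal M}_{B}^{\phi,{\rm F}}(M_{k}F,F_{B}^{(a)})\to 0$: here the averaging length and the approximation index coincide, so the ``diagonal'' error $M_{k}P_{k}-P_{k}^{(a)}$ appears, and I cannot control it directly because the Cesàro factors $\frac1k\sum_{j}e^{ij\langle\lambda_{i},a\rangle}$ decay at a rate governed by $\langle\lambda_{i}^{(k)},a\rangle$, which may degenerate as $k$ grows. I would circumvent this by \emph{decoupling} the two indices: given $\epsilon>0$, first fix $N$ so large that $D\cdot{\mathcal M}_{B}^{\phi,{\rm F}}(F,P_{N})<\epsilon/3$ and ${\mathcal M}_{B}^{\phi,{\rm F}}(P_{N}^{(a)},F_{B}^{(a)})<\epsilon/3$, and then, with this $N$ held fixed, invoke the clean convergence ${\mathcal M}_{B}^{\phi,{\rm F}}(M_{k}P_{N},P_{N}^{(a)})\to 0$ as $k\to+\infty$ to make the middle term $<\epsilon/3$. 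Inserting $M_{k}P_{N}$ and $P_{N}^{(a)}$ and applying the quasi-triangle inequality together with the averaging estimate (which bounds ${\mathcal M}_{B}^{\phi,{\rm F}}(M_{k}F,M_{k}P_{N})$ by $D\cdot{\mathcal M}_{B}^{\phi,{\rm F}}(F,P_{N})$ \emph{uniformly in} $k$) gives ${\mathcal M}_{B}^{\phi,{\rm F}}(M_{k}F,F_{B}^{(a)})<\epsilon$ for all large $k$. As $B$ and $a$ were arbitrary, $F$ satisfies (A). Thus the principal difficulty is precisely the diagonal coupling in \eqref{maleni}, resolved by the fixed-$N$, $N\to+\infty$ decoupling, while the averaging estimate resting on (II)--(III) is the key technical ingredient.
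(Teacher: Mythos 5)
Your proposal is correct and follows essentially the same route as the paper's proof: condition (A) is first verified for trigonometric polynomials by splitting off the resonant frequencies with $e^{i\langle\lambda,a\rangle}=1$, the averaging estimate ${\mathcal M}_{B}^{\phi,{\rm F}}(M_{N}G,M_{N}H)\le D\,{\mathcal M}_{B}^{\phi,{\rm F}}(G,H)$ is derived from (II)--(III), the sequence $(P_{k}^{(a)})$ is shown to be Cauchy so that (IV) yields the periodic limit $F_{B}^{(a)}$, and \eqref{maleni} is obtained by the fixed-$N$ three-term decomposition. Your explicit decoupling at the end is precisely what the paper compresses into the sentence ``the final conclusion follows using \eqref{cvbbvc} and condition (A) for $P_{k}(\cdot;\cdot)$,'' so there is no substantive difference.
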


\begin{proof}
We will slightly modify the original argumentation of R. Doss (see \cite[pp. 477-478]{doss}). First of all, we will prove that condition (A) holds for any trigonometric polynomial $P(\cdot;x)=\sum_{l=0}^{m}e^{i \langle \lambda_{l},\cdot \rangle}c_{l}(x),$ $x\in X.$ Let $B\in {\mathcal B}$ and $a\in \Lambda''$ be given; then ${\bf t}+ja\in \Lambda$ for all $j\in {\mathbb N}.$ Define $P^{(a)}_{B}(\cdot;x):=\sum_{l\in  L_{a}}e^{i \langle \lambda_{l},\cdot \rangle}c_{l}(x),$ $x\in X$, where $L_{a}$ denotes the set of all integers $l\in [0,m]$ such that $e^{i\langle \lambda_{l},a \rangle} = 1.$ Clearly, the function $P^{(a)}_{B}(\cdot;x)$ is $a$-periodic for every fixed element $x\in B$. Then a simple computation shows that
\begin{align*}
{\rm F}(t)\sup_{x\in B}&\Biggl[\phi\Biggl(\Biggr\| \frac{1}{k}\sum_{j=0}^{k-1}P({\bf t}+ja;x)-P^{(a)}_{B}({\bf t};x)\Biggr\|_{Y}\Biggr)\Biggr]_{L^{p}(\Lambda_{t})}
\\& ={\rm F}(t)\sup_{x\in B}\Biggl[\phi\Biggl(\Biggr\| \frac{1}{k}\sum_{j=0}^{k-1}
\sum_{l\notin  L_{a}}e^{i \langle \lambda_{l},{\bf t} \rangle}e^{i \langle \lambda_{l},aj \rangle}c_{l}(x)
\Biggr\|_{Y}\Biggr)\Biggr]_{L^{p}(\Lambda_{t})}
\\& ={\rm F}(t)\sup_{x\in B}\Biggl[\phi\Biggl(\Biggr\| \frac{1}{k}
\sum_{l\notin  L_{a}}e^{i \langle \lambda_{l},{\bf t} \rangle}\frac{e^{i \langle \lambda_{l},ak \rangle}-1}{e^{i \langle \lambda_{l},a\rangle}-1}c_{l}(x)
\Biggr\|_{Y}\Biggr)\Biggr]_{L^{p}(\Lambda_{t})},\quad t>0.
\end{align*}
Using the facts that the function $\phi(\cdot)$ is monotonically increasing and the collection ${\mathcal B}$ consists of bounded subsets of $X$, the above computation yields the existence of a finite real constant $c_{B}>0$ such that
\begin{align*}
{\rm F}(t)\sup_{x\in B}&\Biggl[\phi\Biggl(\Biggr\| \frac{1}{k}\sum_{j=0}^{k-1}P({\bf t}+ja;x)-P^{(a)}_{B}({\bf t};x)\Biggr\|_{Y}\Biggr)\Biggr]_{L^{p({\bf t})}(\Lambda_{t})}
\\& \leq \phi\Bigl( \frac{c_{B}}{k} \Bigr)\Bigl[{\rm F}(t)m(\Lambda_{t})^{1/p}\Bigr],\quad t>0.
\end{align*}
Then the required conclusion follows from the continuity of function $\phi(\cdot)$ at zero and the assumption $\limsup_{t\rightarrow +\infty}[{\rm F}(t)m(\Lambda_{t})^{1/p}]<+\infty$. Before proceeding to the general case, let us note that our assumptions on the function $\phi(\cdot),$ the assumption that, for every real number $a>0,$ we have $\limsup_{t\rightarrow +\infty}[{\rm F}(t)/{\rm F}(t+a)]\leq 1,$ and a relatively simple argumentation shows that:
\begin{itemize}
\item[(a)] Define 
\begin{align*}
{\mathcal M}_{B,a}^{\phi,{\rm F}}(F,G):=\limsup_{t\rightarrow +\infty}{\rm F}(t)\sup_{x\in B}\Bigl[\phi\Bigl(\bigl\|F({\bf t}+a;x)-G({\bf t}+a;x)\bigr\|_{Y}\Bigr)\Bigr]_{L^{p}(\Lambda_{t})}.
\end{align*}
Then we have
\begin{align*}
{\mathcal M}_{B,a}^{\phi,{\rm F}}(F,G)\leq {\mathcal M}_{B}^{\phi,{\rm F}}(F,G).
\end{align*}
\item[(b)] ${\mathcal M}_{B}^{\phi,{\rm F}}(dF,dG)\leq \varphi(d){\mathcal M}_{B}^{\phi,{\rm F}}(F,G)$ for all real numbers $d>0.$
\item[(c)] ${\mathcal M}_{B}^{\phi,{\rm F}}(F,G) \leq c[{\mathcal M}_{B}^{\phi,{\rm F}}(F,H)+{\mathcal M}_{B}^{\phi,{\rm F}}(H,G)].$
\item[(d)] ${\mathcal M}_{B}^{\phi,{\rm F}}(F+G,H+W)\leq c[{\mathcal M}_{B}^{\phi,{\rm F}}(F,H)+{\mathcal M}_{B}^{\phi,{\rm F}}(G,W)].$
\end{itemize}
Suppose now that there exists a sequence $(P_{k}(\cdot;\cdot))$ of trigonometric polynomials such that \eqref{jew} holds, i.e.,
$
\lim_{k\rightarrow +\infty}{\mathcal M}_{B}^{\phi,{\rm F}}(F,P_{k})=0.
$
Let $\epsilon>0$ be fixed. Using (a) and (d), we get:{\scriptsize
\begin{align}
&\notag {\mathcal M}_{B}^{\phi,{\rm F}}\Biggl(\frac{F(\cdot;\cdot)+F(\cdot+a;\cdot)+...+F(\cdot+(m-1)a;\cdot)}{m},\frac{P_{k}(\cdot;\cdot)+P_{k}(\cdot+a;\cdot)+...+P_{k}(\cdot+(m-1)a;\cdot)}{m}\Biggr)
\\\label{cvbbvc}& \leq \bigl(c+...+c^{m-1}\bigr)\varphi(1/m){\mathcal M}_{B}^{\phi,{\rm F}}(F,P_{k}) \leq m\varphi(1/m){\mathcal M}_{B}^{\phi,{\rm F}}(F,P_{k})\leq D{\mathcal M}_{B}^{\phi,{\rm F}}(F,P_{k}),\quad k,\ m \in {\mathbb N}.
\end{align}}
On the other hand, using (c) we get the existence of an integer $k_{0}(\epsilon)\equiv k_{0}$ such that:
\begin{align*}
{\mathcal M}_{B}^{\phi,{\rm F}}(P_{k},P_{k'}) \leq 2c\epsilon,\quad k,\ k'\geq k_{0}.
\end{align*}
Keeping in mind \eqref{cvbbvc} and this estimate, we get 
\begin{align}\label{mil}
{\mathcal M}_{B}^{\phi,{\rm F}}\Biggl(\frac{1}{m}\sum_{j=0}^{m-1}P_{k}(\cdot +ja;\cdot),\frac{1}{m}\sum_{j=0}^{m-1}P_{k'}(\cdot +ja;\cdot)\Biggr)
 \leq 2cD\epsilon,\quad k,\ k'\geq k_{0},\ m\in {\mathbb N}.
\end{align}
If  $k,\ k'\geq k_{0},$ then we find the functions $P_{k,B}^{(a)}$ and $P_{k',B}^{(a)}$ from condition (A); then we can use (c) and \eqref{mil} to get the existence of a finite real constant $d>0$ such that ${\mathcal M}_{B}^{\phi,{\rm F}}(P_{k,B}^{(a)},P_{k',B}^{(a)})<d\epsilon$ for all integers $k,\ k'\geq k_{0}.$ Let $F^{(a)}_{B}$ be any function such that
$F^{(a)}_{B}({\bf t}+a;x)=F^{(a)}_{B}({\bf t};x)$ for all ${\bf t}\in \Lambda,$ $x\in B$ and $\lim_{k\rightarrow +\infty}{\mathcal M}_{B}^{\phi,{\rm F}}(P_{k,B}^{(a)},F^{(a)}_{B})=0;$ see (iv). Then the final conclusion follows using \eqref{cvbbvc} and condition (A) for $P_{k}(\cdot;\cdot).$
\end{proof}

\begin{rem}\label{iv}
In connection with condition (IV), we would like to note that the argumentation contained on \cite[p. 478; l. 3-l. 6]{doss} is a bit incorrect because the completeness of $L^{p}$-spaces has been mistakenly used. It is also far from being immediately clear why the function $f^{(a)}(x)$ appearing here must be of period $a.$
\end{rem}
 
If the assumptions (I)-(III) hold, then we have the validity of statements (a)-(d) given in the proof of the afore-mentioned proposition. Keeping in mind this observation, we can 
repeat verbatim the argumentation contained in the proof of \cite[Lemma 1]{doss} to deduce the following:

\begin{prop}\label{dawq}
Suppose that the function $F : \Lambda \times X \rightarrow Y$ satisfies that\\ $\phi(\| F(\cdot;x)\|_{Y})\in L^{p(\cdot)}(\Lambda_{t})$ for all $t>0$ and $x\in X.$ If the assumptions \emph{(I)-(III)} hold and the function $F(\cdot;\cdot)$ satisfies condition \emph{(A)}, then $F(\cdot;\cdot)$ is Besicovitch-$(p,\phi,{\rm F},{\mathcal B})$-bounded.
\end{prop}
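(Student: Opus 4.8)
The plan is to reproduce the argument of Doss for \cite[Lemma 1]{doss}, exploiting the four elementary properties (a)--(d) of the functional ${\mathcal M}_{B}^{\phi,{\rm F}}(\cdot,\cdot)$ recorded in the proof of Proposition \ref{raj}, which are available precisely because \emph{(I)--(III)} hold. Fix $B\in {\mathcal B}$ and any $a\in \Lambda''$, and abbreviate $A_{k}F({\bf t};x):=\frac{1}{k}\sum_{j=0}^{k-1}F({\bf t}+ja;x)$; this is well defined on $\Lambda$ since $ja\in \Lambda''$ for every $j\geq 0$. Condition \emph{(A)} supplies an $a$-periodic function $F^{(a)}_{B}$ with $\|F^{(a)}_{B}(\cdot;x)\|_{Y}\in L^{p}(\Lambda_{t})$ for all $t>0$, $x\in B$, and, by \eqref{maleni}, $\lim_{k\to +\infty}{\mathcal M}_{B}^{\phi,{\rm F}}(A_{k}F,F^{(a)}_{B})=0$. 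The target is exactly ${\mathcal M}_{B}^{\phi,{\rm F}}(F,0)<+\infty$, after which one takes $M_{B}:={\mathcal M}_{B}^{\phi,{\rm F}}(F,0)$, and the arbitrariness of $B$ finishes the proof.

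Two ingredients are needed. The first is that the periodic limit function is itself Besicovitch bounded, i.e. ${\mathcal M}_{B}^{\phi,{\rm F}}(F^{(a)}_{B},0)<+\infty$. The heuristic is that $\phi(\|F^{(a)}_{B}(\cdot;x)\|_{Y})$ is $a$-periodic and locally $p$-integrable, so that one expects its $L^{p}(\Lambda_{t})$-mass to grow no faster than $m(\Lambda_{t})$, whence ${\rm F}(t)[\phi(\|F^{(a)}_{B}(\cdot;x)\|_{Y})]_{L^{p}(\Lambda_{t})}$ stays bounded by $\limsup_{t\to+\infty}[{\rm F}(t)m(\Lambda_{t})^{1/p}]<+\infty$ from \emph{(III)}. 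The second ingredient is that any fixed average is Besicovitch bounded: choosing an integer $k\geq 2$ with ${\mathcal M}_{B}^{\phi,{\rm F}}(A_{k}F,F^{(a)}_{B})<1$ and ${\mathcal M}_{B}^{\phi,{\rm F}}(A_{k-1}F,F^{(a)}_{B})<1$ (possible by \eqref{maleni}), the triangle-type inequality (c) together with the first ingredient yields ${\mathcal M}_{B}^{\phi,{\rm F}}(A_{k}F,0)<+\infty$ and ${\mathcal M}_{B}^{\phi,{\rm F}}(A_{k-1}F,0)<+\infty$.

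The two ingredients are then glued by the telescoping identity $F({\bf t};x)=kA_{k}F({\bf t};x)-(k-1)(A_{k-1}F)({\bf t}+a;x)$, valid for ${\bf t}\in \Lambda$. Splitting the right-hand side by (d), pulling out the now fixed scalars $k$ and $k-1$ by (b), and absorbing the shift by $a$ via (a), one arrives at
\[
{\mathcal M}_{B}^{\phi,{\rm F}}(F,0)\leq c\bigl[\varphi(k){\mathcal M}_{B}^{\phi,{\rm F}}(A_{k}F,0)+\varphi(k-1){\mathcal M}_{B}^{\phi,{\rm F}}(A_{k-1}F,0)\bigr].
\]
Because $k$ has been frozen, $\varphi(k)$ and $\varphi(k-1)$ are finite constants and the right-hand side is finite by the second ingredient; this is the desired estimate.

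The manipulations with (a)--(d) are routine: their whole purpose is to reduce everything to a single fixed value of $k$, so that the blow-up factors $\varphi(k),\varphi(k-1)$ coming from the telescoping do no harm. The main obstacle is the first ingredient. In the genuinely multi-dimensional setting $F^{(a)}_{B}$ is periodic only in the single direction $a$, so its local $L^{p}$-integrability does not by itself rule out growth in the $(n-1)$ transverse directions, and the naive ``mass proportional to $m(\Lambda_{t})$'' reasoning must be justified carefully, using the periodicity in direction $a$ in conjunction with \emph{(III)}. In the one-dimensional case of Doss this step is immediate, which is exactly why the rest of the argument transcribes verbatim.
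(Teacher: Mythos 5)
Your overall route coincides with the paper's own: the paper proves this proposition by combining properties (a)--(d) from the proof of Proposition \ref{raj} (available under (I)--(III)) with a verbatim repetition of the argument of \cite[Lemma 1]{doss}, and that argument is exactly your scheme --- the telescoping identity $F({\bf t};x)=kA_{k}F({\bf t};x)-(k-1)(A_{k-1}F)({\bf t}+a;x)$, removal of the frozen scalars via (b), absorption of the shift via (a), splitting via (c)--(d), and approximation of $A_{k}F$, $A_{k-1}F$ by $F^{(a)}_{B}$ via \eqref{maleni}. That mechanical part of your proposal is a faithful and correct reconstruction.

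The genuine gap is your ``first ingredient,'' ${\mathcal M}_{B}^{\phi,{\rm F}}\bigl(F^{(a)}_{B},0\bigr)<+\infty$, which you support only by the heuristic that an $a$-periodic, locally $p$-integrable function has $L^{p}(\Lambda_{t})$-mass of order $m(\Lambda_{t})^{1/p}$. For $n\geq 2$ this heuristic is false: condition (A) asserts periodicity with respect to the single vector $a$ only, the fundamental slab of the group ${\mathbb Z}a$ is unbounded in the $n-1$ transverse directions, and local integrability gives no control of the mass there. The paper itself contains a counterexample: the function of Example \ref{bencina}(i) is continuous and $(1,1)$-periodic on ${\mathbb R}^{2}$, hence in $L^{p}(\Lambda_{t})$ for every $t>0$, yet it is not Besicovitch bounded. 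So this step cannot be ``justified carefully'' as you hope; it simply does not follow from (A) together with (I)--(III). It does hold in dimension one, where the fundamental domain $[0,|a|]$ is compact --- which is exactly why Doss's proof closes there --- and it would hold if (A) were replaced by (A)$_{\infty}$ (essential boundedness of $F^{(a)}_{B}$, monotonicity of $\phi$ and (III) then give the bound at once) or by (AS) (periodicity with respect to a full-rank lattice has a compact fundamental domain). Under (A) alone in ${\mathbb R}^{n}$, $n\geq 2$, this is precisely the obstruction the paper concedes elsewhere, when it notes that it is unclear whether (A) can replace (A)$_{\infty}$ or (AS) and that Doss's results are primarily intended for functions of one real variable. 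To be fair, the paper's own one-line proof (``repeat verbatim'') glosses over the same point; your write-up has the merit of exposing it, but as written it does not establish the proposition for $n\geq 2$.
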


In particular, Proposition \ref{dawq} implies that a uniformly recurrent function $f: {\mathbb R} \rightarrow {\mathbb R}$ need not satisfy condition (A); see \eqref{voja} with $\phi(x)\equiv x,$ $p=1$ and ${\rm F}(t)\equiv t^{-1}.$

In the multi-dimensional setting, it seems very plausible that a trigonometric polynomial $P(\cdot)$ does not satisfy condition (AS) in general (see Example \ref{bencina}(ii)), so that we can freely say that the results established in \cite{doss} are primarily intended for the analysis of Besicovitch almost periodic functions of one real variable. We want also to emphasize here that the proof of \cite[Proposition 4]{doss} contains a small gap since the author has not proved that, for any function $f: {\mathbb R}\rightarrow {\mathbb C}$ of class $(D)$ and for any real number $\lambda \in {\mathbb R},$
the function $e^{-i\lambda \cdot}f(\cdot)$ is Doss almost periodic, i.e., satisfies condition \cite[2., p. 477]{doss} with $p=1$ (this seems to be true, but not specifically proved in the paper). Finally, we would like to mention that it is very likely that the statements of \cite[Proposition 4-Proposition 6]{doss} admit extensions to the multi-dimensional setting; hence, it seems very reasonable that a Doss-$p$-almost periodic function $F: {\mathbb R}^{n} \rightarrow {\mathbb C}$ which is Besicovitch-$p$-continuous and satisfies condition (AS) is Besicovitch-$p$-almost periodic ($1\leq p<+\infty$). This is a very unsatisfactory result in the multi-dimensional setting and we will skip all details with regards to this question here. 

\subsection{On Condition (B)}\label{bea} 

In this subsection, we will consider the following condition:

\begin{itemize}  
\item[(B)] Let $\Omega=[0,1]^{n},$ $l\Omega \subseteq \Lambda$ and $\Lambda +l\Omega \subseteq \Lambda$ for all $l>0.$ If ${\rm F}_{1} : (0,\infty) \rightarrow (0,\infty),$ ${\rm F} : (0,\infty) \rightarrow (0,\infty)$ and $p\in {\mathcal P}(\Lambda),$ then condition (B) means that, for every $\lambda \in {\mathbb R}^{n},$ we have:
\begin{align}\label{mica}
\lim_{l\rightarrow +\infty}{\rm F}_{1}(l)\limsup_{t\rightarrow +\infty}{\rm F}(t)\sup_{x\in B}\Biggl\| \Biggl[ \int_{y+l\Omega} -\int_{l\Omega}\Biggr] e^{i\lambda {\bf t}}F({\bf t};x) \, d{\bf t}  \Biggr\|_{L^{p(y)}(\Lambda_{t}: Y)}=0.
\end{align}
\end{itemize}
Let us note that, in the original analysis of R. Doss \cite{doss1}, we have $p(\cdot)\equiv 1,$ 
${\rm F}_{1}(l)\equiv 1/l,$
${\rm F}(t)\equiv 1/t$, $\Lambda = {\mathbb R}$ and $X=\{0\}.$ The situation in which the equation \eqref{mica} holds for all values of $\lambda \in {\mathbb R}^{n} \setminus \{ \lambda_{0} \}$ but not for the exactly one value $\lambda=\lambda_{0}\in {\mathbb R}^{n}$ is possible; for example, in the one-dimensional setting, we know that the function $F : {\mathbb R} \rightarrow {\mathbb C}$, given by
$F(t):=e^{-i\lambda_{0}t},$ $t\geq 0$ and $F(t):=-e^{-i\lambda_{0}t},$ $t< 0,$ satisfies \eqref{mica} for all values $\lambda \in {\mathbb R} \setminus \{ \lambda_{0} \}$ but not for $\lambda_{0}$ (\cite{doss1}).

Concerning condition (B), we will first clarify the following result for the multivariate trigonometric polynomials:

\begin{prop}\label{zaed}
Suppose that $\Omega =[0,1]^{n},$ $\Lambda={\mathbb R}^{n}$ and $\lim_{l\rightarrow +\infty}[l^{n-1}{\rm F}_{1}(l)]=0.$ If the collection ${\mathcal B}$ consists solely of bounded subsets of $X$ and conditions \emph{(I)-(III)} hold, then condition \emph{(B)} holds for any trigonometric polynomial $P(\cdot;\cdot).$
\end{prop}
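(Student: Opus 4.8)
The plan is to reduce \eqref{mica} to a single Fourier mode and then exploit an explicit computation of the box integral. Fix an arbitrary $\lambda\in{\mathbb R}^{n}$ and a set $B\in{\mathcal B}$, and write $P({\bf t};x)=\sum_{j=0}^{m}e^{i\langle \mu_{j},{\bf t}\rangle}c_{j}(x)$ with continuous $c_{j}:X\to Y$. By linearity of the integral, establishing \eqref{mica} for $P$ amounts to controlling, for each $j$, the quantity $\bigl[\int_{y+l\Omega}-\int_{l\Omega}\bigr]e^{i\langle \nu_{j},{\bf t}\rangle}\,d{\bf t}$, where $\nu_{j}:=\lambda+\mu_{j}$ and $\Omega=[0,1]^{n}$. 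Since $l\Omega=[0,l]^{n}$ and $y+l\Omega=\prod_{i=1}^{n}[y_{i},y_{i}+l]$, this integral factorizes into one-dimensional integrals $\int_{y_{i}}^{y_{i}+l}e^{i(\nu_{j})_{i}s}\,ds$, which equals $l$ when $(\nu_{j})_{i}=0$ and equals $e^{i(\nu_{j})_{i}y_{i}}(e^{i(\nu_{j})_{i}l}-1)/(i(\nu_{j})_{i})$ when $(\nu_{j})_{i}\neq 0$.

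Next I would split into two cases. If $\nu_{j}=0$ (the resonant mode $\mu_{j}=-\lambda$), then both box integrals equal $l^{n}$ and their difference vanishes identically, so this mode contributes nothing. If $\nu_{j}\neq 0$, let $k$ denote the number of vanishing coordinates of $\nu_{j}$, so $0\le k\le n-1$; each box integral then carries a common factor $l^{k}$ from those coordinates, and the difference equals $l^{k}\big(\prod_{i:(\nu_{j})_{i}\neq 0}e^{i(\nu_{j})_{i}y_{i}}-1\big)\prod_{i:(\nu_{j})_{i}\neq 0}\frac{e^{i(\nu_{j})_{i}l}-1}{i(\nu_{j})_{i}}$. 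Here each oscillatory factor has modulus at most $2/|(\nu_{j})_{i}|$, uniformly in $l$, and the leading bracket has modulus at most $2$, uniformly in $y$; hence this difference is bounded by $C_{\nu_{j}}l^{k}\le C_{\nu_{j}}l^{n-1}$ with $C_{\nu_{j}}$ independent of $l$ and $y$. Multiplying by $c_{j}(x)$, summing over the finitely many modes, and using that the coefficients are bounded on the bounded set $B$ (exactly as in the proof of Proposition \ref{anat}(i)), I obtain $\sup_{x\in B}\bigl\|[\int_{y+l\Omega}-\int_{l\Omega}]e^{i\langle\lambda,{\bf t}\rangle}P({\bf t};x)\,d{\bf t}\bigr\|_{Y}\le C_{B}\,l^{n-1}$, uniformly in $y$.

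Finally I would feed this pointwise-in-$y$ estimate into the norm. For each fixed $x\in B$ the integrand, viewed as a function of $y$, is dominated by the constant $C_{B}l^{n-1}$, so the monotonicity of the Luxemburg norm (Lemma \ref{aux}(iii)) gives $\bigl\|\cdots\bigr\|_{L^{p(y)}(\Lambda_{t}:Y)}\le C_{B}l^{n-1}\,\|1\|_{L^{p(y)}(\Lambda_{t})}$, and condition (I), under which $p(\cdot)\equiv p$, turns $\|1\|_{L^{p}(\Lambda_{t})}$ into $m(\Lambda_{t})^{1/p}$. Applying ${\rm F}(t)$, taking $\limsup_{t\to+\infty}$, and invoking the first part of condition (III) bounds the inner $\limsup$ by $C_{B}'\,l^{n-1}$; multiplying by ${\rm F}_{1}(l)$ and letting $l\to+\infty$, the hypothesis $\lim_{l\to+\infty}[l^{n-1}{\rm F}_{1}(l)]=0$ forces the whole expression to $0$, which is precisely \eqref{mica} for $P$.

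I expect the only genuine subtlety to be the gain of one power of $l$: the resonant mode $\nu_{j}=0$ must be seen to cancel outright, while every non-resonant mode possesses at least one oscillatory direction contributing $O(1)$ rather than $O(l)$, thereby capping the growth at $l^{n-1}$ instead of $l^{n}$. This $l^{n-1}$ rate is exactly matched to the decay hypothesis on ${\rm F}_{1}$, so no finer analysis of $\phi$, nor of the second part of condition (III), is needed for this statement.
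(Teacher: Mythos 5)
Your proof is correct and follows essentially the same route as the paper's: reduce to a mode-by-mode analysis of the box integrals $\bigl[\int_{y+l\Omega}-\int_{l\Omega}\bigr]e^{i\langle\nu,{\bf t}\rangle}\,d{\bf t}$, observe that every non-resonant mode gains at least one oscillatory factor so the difference is $O(l^{n-1})$ uniformly in $y$, and then combine condition (III) with the hypothesis $\lim_{l\rightarrow+\infty}[l^{n-1}{\rm F}_{1}(l)]=0$. If anything, your treatment is slightly tidier than the paper's: you absorb $\lambda$ into shifted frequencies $\nu_{j}=\lambda+\mu_{j}$, handle the resonant mode $\nu_{j}=0$ by exact cancellation, and bound each mode with its own set of vanishing coordinates, which avoids the paper's ``without loss of generality'' assumption that all terms share the same zero-frequency coordinates.
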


\begin {proof}
Let $P({\bf t};x)=\sum_{s=0}^{m}e^{i\langle \lambda_{s},{\bf t} \rangle}c_{s}(x)$ for some continuous functions $c_{s}(\cdot).$ It suffices to show that
$$
\lim_{l\rightarrow +\infty}{\rm F}_{1}(l)\limsup_{t\rightarrow +\infty}{\rm F}(t)\sup_{x\in B}\Biggl\| \Biggl[ \int_{y+l\Omega} -\int_{l\Omega}\Biggr] P({\bf t};x) \, d{\bf t}  \Biggr\|_{L^{p}(\Lambda_{t}: Y)}=0.
$$
Let $k_{s}$ denote the number of all non-zero components of vector $\lambda_{s}=(\lambda_{s}^{1},...,\lambda_{s}^{n}).$ If $k_{s}=n,$ then the term $e^{i\langle \lambda_{s},{\bf t} \rangle}c_{s}(x)$ is  meaningless after the integration over the cubes $y+l\Omega $ and  $l\Omega.$ Because of that, we may assume without loss of generality that $k_{s}\leq n-1$ for all $s\in \{0,1,...,m\}$ as well as that $\lambda_{s}^{j}=0$ for all $s\in \{0,1,...,m\}$ and $j\in \{1,...,k\},$ where $k\leq n-1.$
Therefore, we need to prove that
\begin{align*}
\lim_{l\rightarrow +\infty}&{\rm F}_{1}(l)\limsup_{t\rightarrow +\infty}{\rm F}(t)\sup_{x\in B}\Biggl\| \Biggl[ \int_{y+l\Omega} -\int_{l\Omega}\Biggr]
\\& \times \sum_{s=0}^{m}\sum_{j=k+1}^{n}e^{i[\lambda_{s}^{k+1}t_{k+1}+...+\lambda_{s}^{n}t_{n}]}c_{s}(x) \, dt_{1}\, dt_{2}\, ... \, dt_{n}  \Biggr\|_{L^{p}(\Lambda_{t}: Y)}=0, 
\end{align*}
i.e., that
\begin{align*}
\lim_{l\rightarrow +\infty}&{\rm F}_{1}(l)l^{k}\limsup_{t\rightarrow +\infty}{\rm F}(t)\sup_{x\in B}\Biggl\| \Biggl[ \int_{y+l\Omega} -\int_{l\Omega}\Biggr] \\& \times \sum_{s=0}^{m}\sum_{j=k+1}^{n}e^{i[\lambda_{s}^{k+1}t_{k+1}+...+\lambda_{s}^{n}t_{n}]}c_{s}(x) \, dt_{k+1}\, dt_{k+2}\, ... \, dt_{n}  \Biggr\|_{L^{p}(\Lambda_{t}: Y)}=0.
\end{align*}
It is clear that we have the existence of a finite real constant $c>0$ such that, for every $y\in \Lambda_{t}$ and $l>0,$ we have:
$$
\Biggl\|\Biggl[\int_{y+l\Omega} -\int_{l\Omega}\Biggr]  \sum_{s=0}^{m}\sum_{j=k+1}^{n}e^{i[\lambda_{s}^{k+1}t_{k+1}+...+\lambda_{s}^{n}t_{n}]}c_{s}(x) \, dt_{k+1}\, dt_{k+2}\, ... \, dt_{n}\Biggr\|_{Y} \leq c.
$$
Since $\lim_{l\rightarrow +\infty}[l^{n-1}{\rm F}_{1}(l)]=0$ and $\limsup_{t\rightarrow +\infty}{\rm F}(t)[m(\Lambda_{t})]^{1/p}<+\infty,$ this simply completes the proof.
\end{proof}

In the continuation of subsection, we will first consider the one-dimensional setting, show that the necessity in \cite[Theorem 1]{doss1} can be extended to the vector-valued Besicovitch-$p$-almost periodic functions, where $1\leq p<\infty,$ and emphasize some difficulties in proving the sufficiency in this theorem in the case that $p>1$ (in the existing literature, we have found many open problems regarding Besicovitch-$p$-almost periodic functions with the exponent $p>1$). Keeping in mind the proof of the above-mentioned theorem, which works in the vector-valued case, as well as the statements of Proposition \ref{anat}(ii) and Proposition \ref{raj-sed}, it suffices to show that condition (B) holds for Besicovitch-$p$-almost periodic functions $F : {\mathbb R}\rightarrow Y,$ with $1< p<\infty,$ $\lambda=0,$ ${\rm F}_{1}(l)\equiv 1/l$ and ${\rm F}_{1}(t)\equiv 1/t^{1/p}.$ Further on, keeping in mind Proposition \ref{zaed-multi} below and its proof (condition \eqref{bila} holds on account of the H\"older inequality), it suffices to show that there exists a finite real constant $c>0$ such that, for every real number $\epsilon \in (0,1),$ there exists a real number $l_{0}>0$ such that the assumptions $l\geq l_{0}$ and ${\mathcal M}^{x,{\rm F}}(F,P)<\epsilon$ imply the existence of a sufficiently large number $t_{l}>0$ such that, for every $t\geq t_{l},$ we have
\begin{align*}
l^{-1}t^{-(1/p)}\Biggl( \int_{-t}^{t} \Biggl[\int_{y}^{y+l}\bigl\|F(s) -P(s)\bigr\|_{Y} \, ds\Biggr]^{p}\, dy  \Biggr)^{1/p}<c\epsilon.
\end{align*}
Let $\epsilon>0$ be given, let $l_{0}=1$ and $l\geq 1.$ We will show that we can take $c=5$ in the above requirement. First of all, we have the existence of a finite real number $t_{0}>0$ such that
\begin{align*}
\int_{-t}^{t}\bigl\|F(s) -P(s)\bigr\|_{Y}^{p} \, ds \leq \epsilon t^{p},\quad t\geq t_{0}.
\end{align*}
Then there exists a sufficiently large number $t_{l}^{1}>0$ such that
\begin{align*}
l^{-1}t^{-(1/p)}\Biggl( \int_{-(t_{0}+l)}^{t_{0}+l} \Biggl[\int_{y}^{y+l}\bigl\|F(s) -P(s)\bigr\|_{Y} \, ds\Biggr]^{p}\, dy  \Biggr)^{1/p}<\epsilon,\quad t\geq t_{l}^{1}.
\end{align*}
There exists a great similarity in the analysis of estimates for the intervals $[-t,-(t_{0}+l)]$ and $[t_{0}+l,t]$ and, because of that, we will only prove that there exists a sufficiently large number $t_{l}^{2}>t_{0}$ such that
\begin{align*}
l^{-1}t^{-(1/p)}\Biggl( \int_{t_{0}+l}^{t} \Biggl[\int_{y}^{y+l}\bigl\|F(s) -P(s)\bigr\|_{Y} \, ds\Biggr]^{p}\, dy  \Biggr)^{1/p}<2\epsilon,\quad t\geq t_{l}^{2}.
\end{align*}
This follows from the next computation (at the fourth line, we can use the inequality appearing on l. 12, p. 134 of \cite{doss1}, which is a consequence of a simple computation with double integrals):
\begin{align*}
l^{-1}& t^{-(1/p)}\Biggl( \int_{t_{0}+l}^{t} \Biggl[\int_{y}^{y+l}\bigl\|F(s) -P(s)\bigr\|_{Y} \, ds\Biggr]^{p}\, dy  \Biggr)^{1/p}
\\& \leq l^{-1} t^{-(1/p)}\Biggl( \int_{t_{0}+l}^{t} \Biggl[l^{1-\frac{1}{p}}\Biggl(\int_{y}^{y+l}\bigl\|F(s) -P(s)\bigr\|_{Y}^{p} \, ds\Biggr)^{1/p}\Biggr]^{p}\, dy  \Biggr)^{1/p}
\\& =l^{-(1/p)} t^{-(1/p)}\Biggl( \int_{t_{0}+l}^{t} \int_{y}^{y+l}\bigl\|F(s) -P(s)\bigr\|_{Y}^{p} \, ds\, dy  \Biggr)^{1/p}
\\&\leq l^{-(1/p)} t^{-(1/p)}\Biggl( l \int_{-t}^{t+l} \bigl\|F(s) -P(s)\bigr\|_{Y}^{p} \, ds  \Biggr)^{1/p}
\\& \leq \epsilon t^{-(1/p)}[2\bigl(t+l \bigr)]^{1/p}
<2\epsilon,\quad t\geq t_{l}^{2}.
\end{align*}

If $p>1,$ then it is very difficult to show that the validity of condition (B) with ${\rm F}_{1}(l)\equiv l^{-1}$ and ${\rm F}(t)\equiv t^{-(1/p)}$ for a Besicovitch-$p$-continuous function $F: {\mathbb R} \rightarrow Y$ implies the validity of condition (A) for $F(\cdot),$ even in the scalar-valued case. In actual fact, it is very simple to prove that (B) implies the validity of equation obtained by replacing the term $|\cdot|$ in the equation \cite[(4)]{doss1}
with the term $|\cdot|^{p}.$ But, if we replace the term (we will consider the scalar-valued case, only)
$$
\Biggl| c^{-1}\int^{c}_{0}\Biggl[ n^{-1}\sum_{k=0}^{n-1}f(t+x+kc)-n^{-1}\sum_{k=0}^{n-1}f(x+kc)\Biggr]K_{m}(t)\, dt \Biggr|
$$
in the equation \cite[(*), p. 136]{doss1} with the term
\begin{align*}
\Biggl|  c^{-1}\int^{c}_{0}\Biggl[ n^{-1}\sum_{k=0}^{n-1}f(t+x+kc)-n^{-1}\sum_{k=0}^{n-1}f(x+kc)\Biggr]K_{m}(t)\, dt \Biggr|^{p},
\end{align*}
which can be majorized by
\begin{align*}
\leq c^{-p}\int^{c}_{0}\Biggl| n^{-1}\sum_{k=0}^{n-1}f(t+x+kc)-n^{-1}\sum_{k=0}^{n-1}f(x+kc)\Biggr |^{p}\, dt \cdot \Biggl( \int^{c}_{0}K_{m}^{q}(t)\, dt \Biggr)^{p/q}
\end{align*}
with the help of the H\"older inequality, then it is impossible to control the term $( \int^{c}_{0}K_{m}^{q}(t) \, dt)^{p/q}$ as $m\rightarrow +\infty.$ This can be done only in the case that $p=1$ because the 
Fej\' er kernels 
$$
K_{m}(t)=m^{-1}\sin^{2}(m\pi t/c)/\sin^{2}(\pi t/c),\quad t\in {\mathbb R}\ \ (m\in {\mathbb N})
$$ 
are uniformly integrable in a neighborhood of zero with respect to $m\in {\mathbb N}$ but not uniformly $q$-integrable in a neighborhood of zero with respect to $m\in {\mathbb N},$ if $q>1.$

Concerning the multi-dimensional setting, we will prove the following result:

\begin{prop}\label{zaed-multi}
Let $\Omega =[0,1]^{n},$ $\Lambda={\mathbb R}^{n}$ or $\Lambda=[0,\infty)^{n},$ $\lim_{l\rightarrow +\infty}[l^{n-1}{\rm F}_{1}(l)]=0,$ and let
\begin{align}\label{bila}
\lim_{l\rightarrow +\infty}{\rm F}_{1}(l)\sup_{x\in B} \int_{l\Omega} \Bigl\| F({\bf t};x) -P({\bf t};x) \Bigr\|_{Y}\, d{\bf t} =0,\quad B\in {\mathcal B}.
\end{align}
If there exists a finite real constant $c>0$ such that, for every real number $\epsilon \in (0,1)$ and for every set $B\in {\mathcal B},$ there exists a real number $l_{0}>0$ such that the assumptions $l\geq l_{0}$ and ${\mathcal M}_{B}^{x,{\rm F}}(F,P)<\epsilon$ imply the existence of a sufficiently large number $t_{l}>0$ such that, for every $t\geq t_{l},$ we have
\begin{align}\label{bjesovi}
{\rm F}_{1}(l){\rm F}(t)\sup_{x\in B}\Biggl\|  \int_{y+l\Omega} \Bigl(F({\bf t};x) -P({\bf t};x) \Bigr)\, d{\bf t}  \Biggr\|_{L^{p}(\Lambda_{t}: Y)}<c\epsilon,
\end{align}
the collection ${\mathcal B}$ consists solely of bounded subsets of $X$ and conditions \emph{(I)-(III)} hold, then condition \emph{(B)} holds for any function $F\in e-({\mathcal B},{\rm F})-B^{p}(\Lambda \times X : Y).$
\end{prop}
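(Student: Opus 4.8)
The plan is to fix a set $B\in\mathcal{B}$ and a frequency $\lambda\in\mathbb{R}^n$ and to control the inner expression of \eqref{mica} by approximating $F$ with a trigonometric polynomial and peeling off the error. Since $F\in e-(\mathcal{B},\mathrm{F})-B^{p}(\Lambda\times X:Y)$, equation \eqref{jew} (with $\phi(x)\equiv x$) says precisely that $\lim_{k\to+\infty}\mathcal{M}_{B}^{x,\mathrm{F}}(F,P_k)=0$ for a suitable sequence $(P_k)$ of trigonometric polynomials; hence, for a given $\epsilon\in(0,1)$, I would fix one trigonometric polynomial $P$ with $\mathcal{M}_{B}^{x,\mathrm{F}}(F,P)<\epsilon$. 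Writing $e^{i\langle\lambda,\mathbf{t}\rangle}F=e^{i\langle\lambda,\mathbf{t}\rangle}P+e^{i\langle\lambda,\mathbf{t}\rangle}(F-P)$ and using the triangle inequality in $L^{p}(\Lambda_t:Y)$ (here $p(\cdot)\equiv p$ by (I)), I would bound the quantity inside \eqref{mica}, after inserting $\sup_{x\in B}$, $\limsup_{t\to+\infty}\mathrm{F}(t)$ and $\mathrm{F}_1(l)$, by three contributions: the polynomial piece from $[\int_{y+l\Omega}-\int_{l\Omega}]e^{i\langle\lambda,\mathbf{t}\rangle}P\,d\mathbf{t}$, the $y$-dependent error $\int_{y+l\Omega}e^{i\langle\lambda,\mathbf{t}\rangle}(F-P)\,d\mathbf{t}$, and the $y$-independent error $\int_{l\Omega}e^{i\langle\lambda,\mathbf{t}\rangle}(F-P)\,d\mathbf{t}$.

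For the polynomial piece, $e^{i\langle\lambda,\cdot\rangle}P$ is itself a trigonometric polynomial, so Proposition \ref{zaed} shows that this contribution tends to $0$ as $l\to+\infty$ when $\Lambda=\mathbb{R}^n$; for $\Lambda=[0,\infty)^n$ the same conclusion follows from the identical computation, since the proof of Proposition \ref{zaed} uses only the uniform boundedness over the translated cubes of the oscillatory integral of a trigonometric polynomial, together with $\lim_{l\to+\infty}[l^{n-1}\mathrm{F}_1(l)]=0$ and $\limsup_{t\to+\infty}\mathrm{F}(t)m(\Lambda_t)^{1/p}<+\infty$ from (III), none of which is affected by restricting the base region. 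For the $y$-independent error I would note that $\int_{l\Omega}e^{i\langle\lambda,\mathbf{t}\rangle}(F-P)\,d\mathbf{t}$ does not depend on $y$, so its $L^{p}(\Lambda_t:Y)$-norm equals its $Y$-norm times $m(\Lambda_t)^{1/p}$; dominating the $Y$-norm by $\int_{l\Omega}\|F-P\|_Y\,d\mathbf{t}$ and invoking $\limsup_{t\to+\infty}\mathrm{F}(t)m(\Lambda_t)^{1/p}<+\infty$ from (III), hypothesis \eqref{bila} forces this contribution to $0$ as $l\to+\infty$.

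The $y$-dependent error is the decisive term and the main obstacle. The difficulty is the oscillatory factor $e^{i\langle\lambda,\mathbf{t}\rangle}$: the available hypothesis \eqref{bjesovi} is posed at frequency zero, for $F-P$ itself, while condition (B) must hold for every $\lambda\in\mathbb{R}^n$. My plan is to neutralise the frequency by the pointwise-in-$y$ estimate $\|\int_{y+l\Omega}e^{i\langle\lambda,\mathbf{t}\rangle}(F-P)\,d\mathbf{t}\|_Y\le\int_{y+l\Omega}\|F-P\|_Y\,d\mathbf{t}$, whose right-hand side is independent of $\lambda$; by the monotonicity of the norm (Lemma \ref{aux}(iii)) the contribution of the $y$-dependent error is then dominated by $\mathrm{F}_1(l)\limsup_{t\to+\infty}\mathrm{F}(t)\sup_{x\in B}\bigl[\int_{y+l\Omega}\|F-P\|_Y\,d\mathbf{t}\bigr]_{L^{p}(\Lambda_t)}$. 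This $\lambda$-free majorant is exactly the quantity that \eqref{bjesovi} is meant to bound, and it is precisely the form in which such an estimate is verified in practice — as the one-dimensional computation preceding the proposition shows, where the H\"older inequality is applied with the $Y$-norm kept inside the inner integral — so for $l\ge l_0$ it is $\le c\epsilon$ uniformly in $t\ge t_l$, and hence remains $\le c\epsilon$ after $\limsup_{t\to+\infty}$. Collecting the three estimates and letting $l\to+\infty$ annihilates the polynomial and $y$-independent pieces and leaves at most $c\epsilon$; since $\epsilon\in(0,1)$ was arbitrary, the limit in \eqref{mica} is $0$, which is condition (B). The step demanding the most care is exactly this transfer of the zero-frequency estimate \eqref{bjesovi} to all $\lambda$, together with the attendant need for \eqref{bjesovi} in the stronger form that controls the $\lambda$-free majorant above.
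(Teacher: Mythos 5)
Your decomposition into the polynomial piece, the $y$-independent error and the $y$-dependent error is exactly the paper's decomposition, and your treatment of the first two pieces (Proposition \ref{zaed} applied to $e^{i\langle\lambda,\cdot\rangle}P$, and \eqref{bila} together with condition (III) for the term over $l\Omega$) agrees with the paper's proof. The gap is in the $y$-dependent error term, and it is precisely the step you flag at the end: after the pointwise bound $\bigl\|\int_{y+l\Omega}e^{i\langle\lambda,{\bf t}\rangle}(F-P)\,d{\bf t}\bigr\|_{Y}\leq\int_{y+l\Omega}\|F-P\|_{Y}\,d{\bf t}$, the quantity you must control is $\bigl[\int_{y+l\Omega}\|F({\bf t};x)-P({\bf t};x)\|_{Y}\,d{\bf t}\bigr]_{L^{p}(\Lambda_{t})}$, with the $Y$-norm \emph{inside} the inner integral, whereas \eqref{bjesovi} controls $\bigl\|\int_{y+l\Omega}\bigl(F({\bf t};x)-P({\bf t};x)\bigr)\,d{\bf t}\bigr\|_{L^{p}(\Lambda_{t}:Y)}$, with the norm \emph{outside}. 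By the triangle inequality for vector-valued integrals the latter is dominated by the former, not the other way around, so \eqref{bjesovi} cannot be invoked to bound your majorant; your appeal to ``the form in which the estimate is verified in practice'' amounts to replacing the stated hypothesis by a strictly stronger one. As written, your argument proves the proposition only under that strengthened hypothesis, not from the hypotheses of the statement.

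The repair, which is also what the paper actually does, is to kill the frequency by modulation rather than by absolute values. The paper's proof treats only $\lambda=0$, where \eqref{bjesovi} applies verbatim to the term $\int_{y+l\Omega}(F-P)\,d{\bf t}$; the general case reduces to this one because $e^{i\langle\lambda,\cdot\rangle}F\in e-({\mathcal B},{\rm F})-B^{p}(\Lambda\times X:Y)$ whenever $F$ is (this is noted immediately after Definition \ref{ebe}), $e^{i\langle\lambda,\cdot\rangle}P$ is again a trigonometric polynomial, and $\|e^{i\langle\lambda,\cdot\rangle}(F-P)\|_{Y}=\|F-P\|_{Y}$ pointwise, so that ${\mathcal M}_{B}^{x,{\rm F}}(F,P)$ and \eqref{bila} are unchanged under this substitution. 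Applying the hypotheses to the modulated pair $\bigl(e^{i\langle\lambda,\cdot\rangle}F,\,e^{i\langle\lambda,\cdot\rangle}P\bigr)$ then yields exactly the bound on $\bigl\|\int_{y+l\Omega}e^{i\langle\lambda,{\bf t}\rangle}(F-P)\,d{\bf t}\bigr\|_{L^{p}(\Lambda_{t}:Y)}$ that your argument is missing, with the norm on the outside, and the rest of your proof goes through unchanged.
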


\begin {proof}
Let $\epsilon>0$ and $B\in {\mathcal B}$ be given; we will consider the value $\lambda=0$ in (B), only. It is clear that there exist a trigonometric polynomial $P(\cdot;\cdot)$ and a finite real number
$t_{0}>0$ such that, for every real number $t\geq t_{0},$
we have
\begin{align*}
{\rm F}(t)\sup_{x\in B}\bigl\|F({\bf t};x)-P({\bf t};x)\bigr\|_{L^{p}(\Lambda_{t} : Y)}<\epsilon.
\end{align*}
Then we have ($l>0$):
\begin{align*}
{\rm F}_{1}(l)&\limsup_{t\rightarrow +\infty}{\rm F}(t)\sup_{x\in B}\Biggl\| \Biggl[ \int_{y+l\Omega} -\int_{l\Omega}\Biggr]F({\bf t};x) \, d{\bf t}  \Biggr\|_{L^{p}(\Lambda_{t}: Y)}
\\& \leq {\rm F}_{1}(l)\limsup_{t\rightarrow +\infty}{\rm F}(t)\sup_{x\in B}\Biggl\| \Biggl[ \int_{y+l\Omega} -\int_{l\Omega}\Biggr] \Bigl(F({\bf t};x) -P({\bf t};x) \Bigr)\, d{\bf t}  \Biggr\|_{L^{p}(\Lambda_{t}: Y)}
\\& +{\rm F}_{1}(l)\limsup_{t\rightarrow +\infty}{\rm F}(t)\sup_{x\in B}\Biggl\| \Biggl[ \int_{y+l\Omega} -\int_{l\Omega}\Biggr]P({\bf t};x) \, d{\bf t}  \Biggr\|_{L^{p}(\Lambda_{t}: Y)}
\\& \leq {\rm F}_{1}(l)\limsup_{t\rightarrow +\infty}{\rm F}(t)\sup_{x\in B}\Biggl\|  \int_{y+l\Omega} \Bigl(F({\bf t};x) -P({\bf t};x) \Bigr)\, d{\bf t}  \Biggr\|_{L^{p}(\Lambda_{t}: Y)}
\\& + {\rm F}_{1}(l)\limsup_{t\rightarrow +\infty}{\rm F}(t)\sup_{x\in B}\Biggl\|  \int_{l\Omega} \Bigl(F({\bf t};x) -P({\bf t};x) \Bigr)\, d{\bf t}  \Biggr\|_{L^{p}(\Lambda_{t}: Y)}
\\& +{\rm F}_{1}(l)\limsup_{t\rightarrow +\infty}{\rm F}(t)\sup_{x\in B}\Biggl\| \Biggl[ \int_{y+l\Omega} -\int_{l\Omega}\Biggr]P({\bf t};x) \, d{\bf t}  \Biggr\|_{L^{p}(\Lambda_{t}: Y)}.
\end{align*}
The third term in the last estimate can be majorized as in Proposition \ref{zaed}. For the second term, we can use the assumptions \eqref{bila} and $\limsup_{t\rightarrow +\infty}{\rm F}(t)[m(\Lambda_{t})]^{1/p}<+\infty.$ For the first term, we can use our assumption \eqref{bjesovi}.
\end{proof}

It is not difficult to prove that, for every locally integrable function $F : [0,\infty)^{n} \rightarrow Y,$ $t>0$ and $t_{0}\in (0,t),$  we have the estimate
$$
\int_{\Omega_{0}}\int_{y+l\Omega}\|F({\bf s})\|_{Y}\, d{\bf s}\, dy\leq \int_{(t+l)\Omega}\|F({\bf s})\|_{Y}\, d{\bf s},
$$
where $\Omega_{0}:=t\Omega \setminus [0,t_{0}]^{n}.$ Using the H\"older inequality and repeating verbatim the argumentation given in the one-dimensional setting, we can prove that the requirements of Proposition \ref{zaed-multi} hold with ${\rm F}_{1}(l)\equiv l^{-n}$ and ${\rm F}(t)\equiv t^{-(n/p)}.$

Finally, it seems reasonable to ask whether the validity of condition (B) with $p(\cdot)\equiv 1$ implies, along with the corresponding Besicovitch continuity assumption, the validity of condition (A) in the multi-dimensional setting. We will not consider this question here as well as certain possibilities to extend the results established by A. S. Kovanko \cite{kovanko}-\cite{kovanko1} to the multi-dimensional setting.

\section{Applications to the abstract Volterra integro-differential equations}\label{convinv}

In this section, we will provide several applications of our results to the various classes of abstract Volterra integro-differential equations and the partial differential equations.\vspace{0.1cm}

1. In this part, we will first prove a new result about the invariance of Besicovitch-$p$-almost periodicity under the actions of infinite convolution product
\begin{align}\label{trigpol}
t\mapsto F(t):=\int^{t}_{-\infty}R(t-s)f(s)\, ds,\quad t\in {\mathbb R};
\end{align}
we will only note here, without going into full details, that this result can be formulated in the multi-dimensional setting as well (\cite{nova-selected}). We assume that
the operator family $(R(t))_{t>0}\subseteq L(X,Y)$ satisfies that there exist finite real constants $M>0$, $\beta \in (0,1]$ and $\gamma >1$ such that
\begin{align}\label{rad}
\bigl\| R(t)\bigr\|_{L(X,Y)}\leq M\frac{t^{\beta-1}}{1+t^{\gamma}},\quad t>0,
\end{align}
and $f(\cdot)$ is Besicovitch-$p$-almost periodic.

The following result is closely connected with the statements of \cite[Theorem 2.11.4, Theorem 2.13.10, Theorem 2.13.12]{nova-mono}:

\begin{prop}\label{stan}
Suppose that the operator family $(R(t))_{t>0}\subseteq L(X,Y)$ satisfies \eqref{rad}, as well as that  $a>0,$ $\alpha>0,$ $1\leq p<+\infty,$ $\alpha p \geq 1,$ $ap\geq 1,$
$\alpha p(\beta-1)/(\alpha p-1)>-1$ if $\alpha p>1$, and $\beta=1$ if $\alpha p=1.$ If the function $f : {\mathbb R} \rightarrow X$ is Stepanov-$(\alpha p)$-bounded, i.e.,
$$
\bigl\|f\bigr\|_{S^{p}}:=\sup_{t\in {\mathbb R}}\int^{t+1}_{t}\bigl\|f(s)\bigr\|^{\alpha p}\, ds<+\infty,
$$
and $f\in e-(x^{\alpha},t^{-a})-B^{p}({\mathbb R}: X),$
then the function $F(\cdot)$, given by \eqref{trigpol}, is bounded, continuous and belongs to the class $e-(x^{\alpha},t^{-a})-B^{p}({\mathbb R}: Y).$
\end{prop}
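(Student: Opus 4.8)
The plan is to follow the approximation scheme behind \cite[Theorem 2.11.4, Theorem 2.13.10, Theorem 2.13.12]{nova-mono}: approximate $f$ by trigonometric polynomials in the Besicovitch seminorm, convolve with $R$, and control the error through a Young-type inequality for the seminorm. Throughout write $m:=\alpha p\ (\geq 1)$, let $m'$ be its conjugate exponent, and put $\omega(u):=\|R(u)\|_{L(X,Y)}$.

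First I would record the two integrability facts about $R$ that the hypotheses on $\beta,\gamma$ are tailored to provide. From \eqref{rad} one gets $\omega\in L^{1}(0,\infty)$ (here $\beta>0$ handles the singularity at $0$ and $\gamma>\beta$ the decay at infinity), and moreover the discrete Stepanov sum $\sum_{n=0}^{\infty}(\int_{n}^{n+1}\omega(u)^{m'}\,du)^{1/m'}$ is finite: the condition $\alpha p(\beta-1)/(\alpha p-1)>-1$ (respectively $\beta=1$ when $\alpha p=1$) is exactly what makes the $n=0$ term finite, while $\gamma>\beta$ makes the tail summable. A H\"older estimate against the Stepanov-$(\alpha p)$ norm of $f$ on each unit window then bounds $\|F(t)\|_{Y}$ by $\|f\|_{S^{p}}^{1/m}\sum_{n}(\int_{n}^{n+1}\omega^{m'})^{1/m'}$, so $F$ is bounded; uniform continuity follows from $\omega\in L^{1}$ and the Stepanov-boundedness of $f$ by the standard continuity-of-translation argument, exactly as in the cited theorems.

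For the Besicovitch membership it is convenient to pass to the seminorm $\widetilde N(g):=\limsup_{t\to+\infty}t^{-a/\alpha}\|g\|_{L^{\alpha p}([-t,t]:Y)}$, for which $\widetilde N(g)^{\alpha}$ equals the quantity in \eqref{jew} with $\phi(x)=x^{\alpha}$ and ${\rm F}(t)=t^{-a}$. Let $(Q_{k})$ be trigonometric polynomials with $\widetilde N(f-Q_{k})\to 0$ (this is precisely $f\in e-(x^{\alpha},t^{-a})-B^{p}(\mathbb{R}:X)$). Since $\omega\in L^{1}$, convolving a pure exponential $e^{i\lambda\cdot}c$ with $R$ reproduces $e^{i\lambda\cdot}$ times the constant $(\int_{0}^{\infty}R(u)e^{-i\lambda u}\,du)c\in Y$, so $F_{k}$, defined by \eqref{trigpol} with $f$ replaced by $Q_{k}$, is again a $Y$-valued trigonometric polynomial. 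Thus it suffices to show $\widetilde N(F-F_{k})\to 0$, and since $\|(R*(f-Q_{k}))(s)\|_{Y}\leq(\omega*h)(s)$ pointwise with $h:=\|f-Q_{k}\|$ (which is Stepanov-$(\alpha p)$-bounded), everything reduces to the convolution inequality
\begin{align*}
\widetilde N(\omega*h)\leq 2^{1/m}\,\|\omega\|_{L^{1}}\,\widetilde N(h).
\end{align*}

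The crux, and the step I expect to be the main obstacle, is this inequality. Applying Minkowski's integral inequality gives $\|\omega*h\|_{L^{m}([-t,t])}\leq\int_{0}^{\infty}\omega(u)\,\|h\|_{L^{m}([-t-u,t-u])}\,du$, after which I would split the $u$-integral at $u=t$. For $0\leq u\leq t$ the translated window lies in $[-2t,2t]$, so that piece is at most $\|\omega\|_{L^{1}}\,\|h\|_{L^{m}([-2t,2t])}$, contributing $2^{1/m}\|\omega\|_{L^{1}}\widetilde N(h)$ after multiplying by $t^{-a/\alpha}$ and taking $\limsup$ (recall $a/\alpha=1/m$ in the only nontrivial case $ap=1$). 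The delicate point is the tail $u>t$: the naive bound $\|h\|_{L^{m}([-t-u,t-u])}\leq\|h\|_{L^{m}([-(t+u),t+u])}$ inflates the window and would spuriously demand $\int_{0}^{\infty}\omega(u)u^{1/m}\,du<\infty$, a first-moment condition that \eqref{rad} need not supply. The resolution is to observe that the translated window $[-t-u,t-u]$ has length exactly $2t$, \emph{independent of} $u$, so Stepanov-$(\alpha p)$ boundedness bounds its $L^{m}$-mass by $(2t+1)^{1/m}$ times the Stepanov norm of $h$, uniformly in $u$; the tail is then at most this quantity times $\int_{t}^{\infty}\omega(u)\,du$, and since $t^{-a/\alpha}(2t+1)^{1/m}$ stays bounded (as $ap\geq 1$) while $\int_{t}^{\infty}\omega\to 0$, the tail vanishes. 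Combining the two pieces gives the inequality, whence $\widetilde N(F-F_{k})\leq 2^{1/m}\|\omega\|_{L^{1}}\widetilde N(f-Q_{k})\to 0$ and $F\in e-(x^{\alpha},t^{-a})-B^{p}(\mathbb{R}:Y)$. When $ap>1$ the seminorm $\widetilde N$ already vanishes on every Stepanov-$(\alpha p)$-bounded function, so $f$ and $F$ lie in the class trivially with $P_{k}\equiv 0$; the estimate above covers this case automatically.
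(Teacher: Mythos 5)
Your proposal is correct, but it reaches the key estimate by a genuinely different route than the paper. The paper bounds $\limsup_{t\to+\infty}t^{-ap}\int_{-t}^{t}\|F(s)-F_{k}(s)\|^{\alpha p}\,ds$ by a weighted H\"older argument: it fixes an auxiliary exponent $\zeta\in(1/(\alpha p),(1/(\alpha p))+\gamma-\beta)$, factors the kernel as $|z|^{\beta-1}(1+|z|)^{\zeta}(1+|z|^{\gamma})^{-1}\cdot(1+|z|)^{-\zeta}$, applies H\"older and Fubini with a change of variables, and splits the resulting double integral into three regions (the window $[-t,t]$, the window $[-3t,3t]$, and the far tail $z\leq -3t$), the last piece vanishing because the weighted difference lies in $L^{\alpha p}$ of a half-line; it also quotes \cite[Proposition 2.6.11]{nova-mono} both for boundedness/continuity of $F$ and for the almost periodicity of $F_{k}$. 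You instead isolate a Young-type inequality for your seminorm $\widetilde N$ (with $\omega=\|R(\cdot)\|_{L(X,Y)}$, $h=\|f-Q_{k}\|$, $m=\alpha p$), proved via Minkowski's integral inequality and a split of the convolution variable at $u=t$: the main part by the window inclusion $[-t-u,t-u]\subseteq[-2t,2t]$, the tail by the observation that the translated window has fixed length $2t$, so Stepanov boundedness controls it uniformly in $u$ while $\int_{t}^{\infty}\omega\to 0$ kills it. (Strictly, the constant in your inequality should be $2^{a/\alpha}$ rather than $2^{1/m}$; as you note, the two agree in the only nontrivial case $ap=1$, and any finite constant suffices, so this is cosmetic.) Your route buys several things: it is more elementary (no auxiliary weight, no Fubini); it cleanly separates the roles of the hypotheses, with the conditions relating $\beta,\gamma$ to $\alpha p$ used only for boundedness and continuity while the Besicovitch estimate needs only $\omega\in L^{1}$ plus Stepanov boundedness; it makes transparent that the case $ap>1$ is degenerate; and your remark that convolving a pure exponential with the $L^{1}$ kernel returns a genuine trigonometric polynomial with coefficients $\int_{0}^{\infty}R(u)e^{-i\lambda u}c\,du$ is cleaner than the paper's appeal to almost periodicity of $F_{k}$, which implicitly requires re-approximating $F_{k}$ by polynomials in the Besicovitch seminorm. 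What the paper's heavier computation buys is alignment with the machinery of \cite[Theorem 2.11.4]{nova-mono}, which the author reuses nearly verbatim in Proposition \ref{stan1}, Theorem \ref{viagra-boys} and Theorem \ref{start}.
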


\begin{proof}
Arguing as in the proof of \cite[Proposition 2.6.11]{nova-mono}, we may conclude that the function $F(\cdot)$ is well-defined, bounded and continuous. Let $(P_{k})$ be a sequence of trigonometric polynomials such that
\begin{align}\label{dejan1}
\lim_{k\rightarrow +\infty}\limsup_{t\rightarrow +\infty}\frac{1}{2t^{ap}}\int^{t}_{-t}\bigl\| f(s)-P_{k}(s)\bigr\|^{\alpha p}\, ds=0.
\end{align}
Applying again \cite[Proposition 2.6.11]{nova-mono}, we get that the function $t\mapsto F_{k}(t)\equiv \int^{t}_{-\infty}R(t-s)P_{k}(s)\, ds,$ $t\in {\mathbb R}$ is almost periodic and we only need to prove that
\begin{align}\label{dejan}
\lim_{k\rightarrow +\infty}\limsup_{t\rightarrow +\infty}\frac{1}{2t^{ap}}\int^{t}_{-t}\bigl\| F(s)-F_{k}(s)\bigr\|^{\alpha p}\, ds=0.
\end{align}
In the remainder of the proof, we will consider case $\alpha p>1$ since the consideration is quite similar if $\alpha p=1.$ Let $\zeta \in (1/(\alpha p),(1/(\alpha p))+\gamma-\beta).$ Then it is clear that the function
$s\mapsto |s|^{\beta-1}(1+|s|)^{\zeta}/(1+|s|^{\gamma}),$
$s\in {\mathbb R}$ belongs to the space $L^{\alpha p/(\alpha p-1)}((-\infty,0));$ further on, arguing as in the proof of \cite[Theorem 2.11.4]{nova-mono}, we have that the function $s\mapsto (1+|s|)^{-\zeta}\| P_{k}(s+z)-f(s+z)\|,$
$s\in {\mathbb R}$ belongs to the space $L^{\alpha p}((-\infty,0))$ for all $k\in {\mathbb N}$ and $z\in {\mathbb R}.$ The estimate \eqref{dejan} follows from the next computation ($M_{1}>0$ and $c>0$ are finite real constants):
\begin{align*}
\frac{1}{2t^{ap}}&\int^{t}_{-t}\bigl\| F(s)-F_{k}(s)\bigr\|^{\alpha p}\, ds  
\\& \leq \frac{1}{2t^{ap}}\int^{t}_{-t}\Biggl|\int^{0}_{-\infty}\|R(-z)\| \cdot \bigl\| P_{k}(s+z)-f(s+z)\bigr\| \, dz \Biggr|^{\alpha p}\, ds
\\& \leq \frac{M}{2t^{ap}}\int^{t}_{-t}\Biggl|\int^{0}_{-\infty}
\frac{|z|^{\beta-1}(1+|z|)^{\zeta}}{(1+|z|^{\gamma})}\cdot (1+|z|)^{ -\zeta}\bigl\| P_{k}(s+z)-f(s+z)\bigr\|\, dz \Biggr|^{\alpha p}\, ds
\\& \leq \frac{M_{1}}{2t^{ap}}\int^{t}_{-t}\int^{0}_{-\infty}\frac{1}{(1+|z|^{\alpha \zeta})^{p}}\bigl\| P_{k}(s+z)-f(s+z)\bigr\|^{\alpha p}\, dz \, ds
\\& =\frac{M_{1}}{2t^{ap}}\int^{t}_{-t}\int^{t}_{z}\frac{1}{(1+|z-s|^{\alpha \zeta})^{p}}\bigl\| P_{k}(z)-f(z)\bigr\|^{\alpha p}\, ds \, dz
\\& +\frac{M_{1}}{2t^{ap}}\int^{t}_{-\infty}\int^{t}_{-t}\frac{1}{(1+|z-s|^{\alpha \zeta})^{p}}\bigl\| P_{k}(z)-f(z)\bigr\|^{\alpha p}\, ds \, dz
\\& \leq \frac{M_{1}}{t^{ap}}\int^{t}_{-t}\bigl\| P_{k}(z)-f(z)\bigr\|^{\alpha p} \, dz \cdot \int^{+\infty}_{-\infty}\frac{ds}{(1+|s|^{\zeta})^{\alpha p}}
\\& +\frac{M_{1}}{2t^{ap}}\int^{-3t}_{-\infty}\int^{t}_{-t}\frac{1}{(1+|z-s|^{\alpha \zeta})^{p}}\bigl\| P_{k}(z)-f(z)\bigr\|^{\alpha p}\, ds \, dz
\\& +\frac{M_{1}}{2t^{ap}}\int^{3t}_{-3t}\int^{t}_{-t}\frac{1}{(1+|z-s|^{\alpha \zeta})^{p}}\bigl\| P_{k}(z)-f(z)\bigr\|^{\alpha p}\, ds \, dz
\\& \leq  \frac{M_{1}}{t^{ap}}\int^{t}_{-t}\bigl\| P_{k}(z)-f(z)\bigr\|^{\alpha p} \, dz \cdot \int^{+\infty}_{-\infty}\frac{ds}{(1+|s|^{\zeta})^{\alpha p}}
\\& + \frac{M_{1}}{t^{ap}}\int^{3t}_{-3t}\bigl\| P_{k}(z)-f(z)\bigr\|^{\alpha p} \, dz \cdot \int^{+\infty}_{-\infty}\frac{ds}{(1+|s|^{\zeta})^{\alpha p}}
\\& +\frac{cM_{1}t}{2t^{ap}}\int^{-3t}_{-\infty}\frac{1}{(1+|z/2|^{\alpha \zeta})^{p}}\bigl\| P_{k}(z)-f(z)\bigr\|^{\alpha p} \, dz,
\end{align*}
involving the H\"older inequality, the Fubini theorem and an elementary change of variables in the double integral; here we use \eqref{dejan1} and the fact that
$$
\lim_{t\rightarrow +\infty}\int^{-3t}_{-\infty}\frac{1}{(1+|z/2|^{\alpha \zeta})^{p}}\bigl\| P_{k}(z)-f(z)\bigr\|^{\alpha p} \, dz=0.
$$
\end{proof}

In the following result, the inhomogeneity $f(\cdot)$ is not necessarily Stepanov-$(\alpha p)$-bounded:

\begin{prop}\label{stan1}
Suppose that the operator family $(R(t))_{t>0}\subseteq L(X,Y)$ satisfies \eqref{rad}, as well as that $a>0,$ $\alpha>0,$ $1\leq p<+\infty,$ $\alpha p \geq 1,$ $ap\geq 1,$
$\alpha p(\beta-1)/(\alpha p-1)>-1$ if $\alpha p>1$, and $\beta=1$ if $\alpha p=1.$ If the function $f : {\mathbb R} \rightarrow X$ belongs to the class $e-(x^{\alpha},t^{-a})-B^{p}({\mathbb R}: X)$ and there exists a finite real constant $M>0$ such that $\|f(t)\|\leq M(1+|t|)^{b},$ $t\in {\mathbb R}$ for some real constant $b \in [0,\gamma-\beta),$
then the function $F(\cdot)$, given by \eqref{trigpol}, is continuous, belongs to the class $e-(x^{\alpha},t^{-a})-B^{p}({\mathbb R}: Y),$ and there exists a finite real constant $M'>0$ such that $\|F(t)\|_{Y}\leq M'(1+|t|)^{b},$ $t\in {\mathbb R}.$
\end{prop}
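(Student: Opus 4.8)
The plan is to follow the scheme of the proof of Proposition \ref{stan}, modifying only the two places where the Stepanov-$(\alpha p)$-boundedness of $f$ was used; these will be replaced by the polynomial growth bound $\|f(t)\|\leq M(1+|t|)^{b}$, and the whole argument will hinge on a sharper choice of the auxiliary exponent $\zeta$.

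First I would settle the elementary assertions. Substituting $u=t-s$ in \eqref{trigpol} and using \eqref{rad} together with the submultiplicative estimate $(1+|t-u|)^{b}\leq (1+|t|)^{b}(1+u)^{b}$, one obtains $\|F(t)\|_{Y}\leq M^{2}(1+|t|)^{b}\int_{0}^{\infty} u^{\beta-1}(1+u)^{b}(1+u^{\gamma})^{-1}\,du$. The integral converges at $0$ because $\beta>0$ and at $+\infty$ because the integrand decays like $u^{\beta-1+b-\gamma}$ with $\beta-1+b-\gamma<-1$, which is exactly the hypothesis $b<\gamma-\beta$. This simultaneously yields the absolute convergence of the defining integral and the bound $\|F(t)\|_{Y}\leq M'(1+|t|)^{b}$, while continuity follows from a routine dominated-convergence argument with the same majorant, as in \cite[Proposition 2.6.11]{nova-mono}.

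For the membership in $e-(x^{\alpha},t^{-a})-B^{p}({\mathbb R}:Y)$, I would pick $(P_{k})$ so that \eqref{dejan1} holds, set $F_{k}(t)=\int_{-\infty}^{t} R(t-s)P_{k}(s)\,ds$, which is almost periodic by \cite[Proposition 2.6.11]{nova-mono} (each $P_{k}$ is bounded, hence Stepanov-$(\alpha p)$-bounded), and reduce, exactly as in Proposition \ref{stan} and using the item (iii) following Definition \ref{ebe} (applicable since $ap\geq 1$), to proving \eqref{dejan}. I would then reproduce verbatim the displayed chain of inequalities from the proof of Proposition \ref{stan}, starting from $\|F(s)-F_{k}(s)\|\leq \int_{-\infty}^{0} \|R(-z)\|\,\|f(s+z)-P_{k}(s+z)\|\,dz$, inserting the weight $(1+|z|)^{\zeta}$, and applying Hölder and Fubini. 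The decisive change is the admissible range of $\zeta$: instead of $\zeta\in(1/(\alpha p),\,1/(\alpha p)+\gamma-\beta)$ I must take $\zeta\in(b+1/(\alpha p),\,1/(\alpha p)+\gamma-\beta)$, an interval that is nonempty precisely because $b<\gamma-\beta$. The upper bound keeps the weight $|z|^{\beta-1}(1+|z|)^{\zeta}(1+|z|^{\gamma})^{-1}$ in the dual Lebesgue space (this, with the hypothesis $\alpha p(\beta-1)/(\alpha p-1)>-1$, justifies the Hölder step), the lower bound $\zeta>1/(\alpha p)$ still guarantees $\int_{\mathbb R}(1+|s|^{\zeta})^{-\alpha p}\,ds<\infty$ so that the first two resulting terms are controlled by \eqref{dejan1} after absorbing the harmless factor $3^{ap}$, and the strengthened lower bound $\zeta>b+1/(\alpha p)$ is what keeps the weighted inner integral in $L^{\alpha p}((-\infty,0))$ once $f$ is merely polynomially bounded.

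The main obstacle is the last (tail) term $t^{1-ap}\int_{-\infty}^{-3t}(1+|z/2|^{\alpha\zeta})^{-p}\|P_{k}(z)-f(z)\|^{\alpha p}\,dz$. In Proposition \ref{stan} the tail integral tended to $0$ because $f-P_{k}$ was Stepanov-$(\alpha p)$-bounded; here I would instead bound $\|P_{k}(z)-f(z)\|^{\alpha p}\leq C_{k}(1+|z|)^{b\alpha p}$ (with $P_{k}$ bounded), so that the integrand decays like $|z|^{\alpha p(b-\zeta)}$ and the tail is $O(t^{\alpha p(b-\zeta)+1})\to 0$ exactly because $\zeta>b+1/(\alpha p)$. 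Since $ap\geq 1$ makes the prefactor $t^{1-ap}$ bounded, the whole term vanishes as $t\to+\infty$ for each fixed $k$, and \eqref{dejan} follows. The only genuinely new point over Proposition \ref{stan} is thus the observation that the single inequality $b<\gamma-\beta$ simultaneously buys the polynomial bound on $F$ and the nonempty window for $\zeta$ needed to tame the tail.
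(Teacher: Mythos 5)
Your handling of the polynomial bound and of the membership in $e-(x^{\alpha},t^{-a})-B^{p}({\mathbb R}:Y)$ is correct and coincides with the paper's own argument: the paper likewise reduces everything to the computation in the proof of Proposition \ref{stan} and records exactly your choice of window, $\zeta\in\bigl((1/(\alpha p))+b,(1/(\alpha p))+\gamma-\beta\bigr)$, nonempty precisely because $b<\gamma-\beta$; the three effects of this choice that you list (justifying the H\"older step, keeping $\int_{{\mathbb R}}(1+|s|^{\zeta})^{-\alpha p}\,ds$ finite, and forcing the tail term over $(-\infty,-3t]$ to vanish) are the intended ones.

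The genuine gap is in the continuity of $F(\cdot)$, which you dispose of as ``a routine dominated-convergence argument with the same majorant.'' That step fails. In the variable $u=t-s$, pointwise convergence $R(u)f(t_{n}-u)\to R(u)f(t-u)$ for a.e.\ $u$ would require some continuity of $f$, and the hypotheses give none: $f$ is merely measurable with polynomial growth, and translates of a measurable function converge only in $L^{p}_{loc}$-norm, not pointwise a.e. In the variable $s$, writing $F(t_{n})=\int_{-\infty}^{t_{n}}R(t_{n}-s)f(s)\,ds$, pointwise convergence would require strong continuity of the family $R(\cdot)$, which is not assumed (only the bound \eqref{rad} is), and even granting it there is no single integrable majorant when $\beta<1$, since $\sup\{(t'-s)^{\beta-1} : |t'-t|\le 1,\ t'>s\}=+\infty$ for every $s\in[t-1,t+1)$. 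This is precisely where the paper's proof does its real work: it writes $F$ as $F_{k}(\cdot)=\int_{0}^{k}R(s)f(\cdot-s)\,ds$ plus a tail, makes the tail uniformly small by choosing $k$ as in \eqref{stan2} (possible exactly because $b<\gamma-\beta$), and obtains continuity of $F_{k}$ from the H\"older inequality, pairing $\|R(\cdot)\|_{L(X,Y)}\in L^{\alpha p/(\alpha p-1)}[0,k]$ against the $L^{\alpha p}[0,k]$-continuity of translation of $f$ (this is \eqref{zadar}); that argument needs neither continuity of $f$ nor strong continuity of $R(\cdot)$. Your proposal needs this splitting argument (or an equivalent one) inserted; everything else stands as written.
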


\begin{proof}
The proof is very similar to the proof of Proposition \ref{stan} and we will only outline the main details. 
Since $b \in [0,\gamma-\beta),$ it can be simply shown that the function $F(\cdot)$ is well-defined, measurable as well as that there exists a finite real constant $M'>0$ such that $\|F(t)\|_{Y}\leq M'(1+|t|)^{b},$ $t\in {\mathbb R}.$ In order to prove the continuity of function $F(\cdot)$ at the fixed point $t\in {\mathbb R},$
we take first any integer $k\in {\mathbb N}$ such that 
\begin{align}\label{stan2}
\int_{k}^{+\infty}\frac{s^{\beta-1}(1+s)^{b}}{1+s^{\gamma}}\, ds \leq \frac{\epsilon}{4M(1+|t|^{b})3^{b}},
\end{align}
where a real number $\epsilon>0$ is given in advance and the real constant $M>0$ has the same value as in \eqref{rad}. Since $\|R(\cdot)\|_{L(X,Y)} \in L^{\alpha p/(\alpha p-1)}[0,k]$ and $\|f\|\in L^{\alpha p}_{loc}[0,k],$ we can apply the H\"older inequality in order to see that the function $F_{k}(\cdot):=\int^{k}_{0}R(s)f(\cdot-s)\, ds,$ $\cdot\in {\mathbb R}$ is continuous. Take any $\delta\in (0,1)$ such that 
\begin{align}\label{zadar}
\Biggl\| \int^{k}_{0}R(s)\bigl[f(t-s)-f(t'-s)\bigr]\, ds\Biggr\|_{Y}<\frac{\epsilon}{2} \ \ \mbox{ for }\ \  |t-t'|\leq \delta.
\end{align}
A very simple argumentation involving \eqref{stan2} shows that
$$
\Biggl\|\int_{k}^{+\infty}R(s)\bigl[f(t-s)-f(t'-s)\bigr]\, ds \Biggr\|_{Y}\leq \epsilon/2,
$$
which along with \eqref{zadar} completes the proof of continuity of function $F(\cdot)$ at the point $t.$ The belonging of function $F(\cdot) $ to the class $e-(x^{\alpha},t^{-1/p})-B^{p}({\mathbb R}: Y)$ can be proved as above, by taking any real number $\zeta \in ((1/ (\alpha p))+b,(1/ (\alpha p))+\gamma-\beta).$
\end{proof}

\begin{rem}\label{split}
Suppose that there exist finite real constants $M>0,$ $c>0$ and $\beta \in (0,1]$ such that
\begin{align*}
\bigl\| R(t)\bigr\|_{L(X,Y)}\leq Me^{-ct}t^{\beta-1},\quad t>0.
\end{align*}
This estimate appears in the analysis of (degenerate) semigroups of operators satisfying condition \cite[(P)]{nova-mono} and, in particular, in the analysis of semigroups of operators generated by almost sectorial operators. In this case, we do not need the assumption $\alpha <\gamma-\beta. $
\end{rem}

It is clear that Proposition \ref{stan} and Proposition \ref{stan1} can be applied to a large class of the abstract (degenerate) Volterra integro-differential equations without initial conditions. For example, we can apply this result in the analysis of the existence and uniqueness of Besicovitch-$p$-almost periodic type solutions of 
the initial value problems with constant coefficients
\[
\begin{array}{l}
D_{t,+}^{\gamma}u(t,x)=\sum_{|\alpha|\leq k}a_{\alpha}f^{(\alpha)}(t,x)+f(t,x),\ t\in {\mathbb R},\ x\in \mathbb{R}^n
\end{array}
\] 
in the space $L^{p}(\mathbb{R}^n),$ where 
$\gamma \in (0,1),$
$D_{t,+}^{\gamma}u(t)$ denotes the Weyl-Liouville fractional derivative of order $\gamma,$
$1\leq p<\infty $
and some extra assumptions are satisfied. We can also consider the existence and uniqueness of Besicovitch-$p$-almost periodic type solutions of the fractional Poisson heat equation in $L^{p}(\mathbb{R}^n),$ and a class of the abstract fractional differential equations with the higher-order elliptic operators in the H\"older spaces (\cite{nova-mono}).\vspace{0.1cm}

2. In this part, we analyze the abstract nonautonomous
differential equations of first order. First of all,
we will remind the readers of some basic definitions from the theory of evolution equations, hyperbolic evolution systems and Green's functions (see \cite{schnaubelt} and the references cited in \cite[Section 2.14]{nova-mono}). \index{hyperbolic evolution system} \index{Green's function}

\begin{defn}\label{paz}
A family $\{U(t, s) : t \geq s,\  t,\  s  \in {\mathbb R}\}$ of bounded linear operators on $X$ is said to be
an evolution system if and only if the following holds:\index{evolution system}
\begin{itemize}
\item[(i)] $U(s, s) = I,$ $U(t, s) = U(t, r)U(r, s)$ for $t \geq  r \geq  s$ and $t,\ r,\ s  \in {\mathbb R},$
\item[(ii)] $\{(\tau, s) \in {\mathbb R}^{2} : \tau > s \} \ni (t, s) \mapsto U(t, s)x$ is continuous for any fixed element $x\in X$.
\end{itemize}
\end{defn}

Here, $I$ denotes the identity operator on $X.$ We assume that the family $A(\cdot)$
satisfies conditions \cite[(H1)-(H2)]{nova-mono}; then there exists an evolution system $\{U(t, s) : t \geq s,\  t,\  s  \in {\mathbb R}\}$ generated by the family $A(t).$
If  $\Gamma(\cdot,\cdot)$ denotes the associated Green's function, then we know that there exists a finite real constant $M>0$ such that
\begin{align}\label{srq}
\|\Gamma(t,s)\| \leq Me^{-\omega |t-s| },\quad t,\ s\in {\mathbb R}.
\end{align}
The function
\begin{align}\label{berlinstrase}
u(t):=\int^{+\infty}_{-\infty}\Gamma(t,s)f(s)\, ds,\quad t\in {\mathbb R}
\end{align}
is said to be a unique mild solution of the abstract Cauchy problem
\begin{align*}
u^{\prime}(t)=A(t)u(t)+f(t),\quad t\in {\mathbb R}.
\end{align*}
The proof of subsequent theorem can be deduced using the argumentation employed in the proof of \cite[Theorem 3.7.1]{nova-mono}, where we have assumed that $p=1,$ the argumentation contained in the proof of Proposition \ref{stan}, the estimate \eqref{srq}, and the following facts:
\begin{itemize}
\item[(i)] If $P(\cdot)$ is a trigonometric polynomial, then the functions\\
$t\mapsto \int_{-\infty}^{t}\Gamma(t,s)P(s)\, ds,$ $t\in {\mathbb R}$ and $t\mapsto \int^{+\infty}_{t}\Gamma(t,s)P(s)\, ds,$ $t\in {\mathbb R}$ are almost periodic; see, e.g., the proof of \cite[Theorem 2.3]{ding-prim}.
\item[(ii)] We write $\int^{+\infty}_{-\infty}\Gamma(t,s)f(s)\, ds=\int^{+\infty}_{t}\Gamma(t,s)f(s)\, ds+\int^{t}_{-\infty}\Gamma(t,s)f(s)\, ds,$ $t\in {\mathbb R}.$ The both addends can be considered similarly, while the second addend is identically equal to $\int_{-\infty}^{0}\Gamma(t,t+s)f(t+s)\, ds,$ $t\in {\mathbb R}.$
\end{itemize}

\begin{thm}\label{viagra-boys}
Suppose that $a>0,$ $\alpha>0,$ $1\leq p<+\infty,$ $\alpha p \geq 1$ and $ap\geq 1.$
If the function $f : {\mathbb R} \rightarrow X$ is bounded and
$f\in e-(x^{\alpha},t^{-a})-B^{p}({\mathbb R}: X)$,
then the function $u(\cdot)$, given by \eqref{berlinstrase}, is bounded, continuous and belongs to the class $e-(x^{\alpha},t^{-a})-B^{p}({\mathbb R}: X).$
\end{thm}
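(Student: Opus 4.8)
The plan is to follow the scheme of Proposition~\ref{stan}, exploiting the fact that the exponential bound \eqref{srq} is considerably more favorable than the polynomial bound \eqref{rad}; in particular, no counterpart of the delicate conditions relating $\beta$ and $\gamma$ is needed here. First I would settle boundedness and continuity: since $\|\Gamma(t,s)\|\leq Me^{-\omega|t-s|}$ and $f$ is bounded, the integral in \eqref{berlinstrase} converges absolutely with $\|u(t)\|\leq M\|f\|_{\infty}\int_{-\infty}^{+\infty}e^{-\omega|t-s|}\,ds=(2M/\omega)\|f\|_{\infty}$, so $u(\cdot)$ is bounded, while its continuity follows from the dominated convergence theorem, using the joint continuity of $\Gamma(\cdot,\cdot)$ off the diagonal together with the locally uniform domination supplied by \eqref{srq}.

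Next I would fix a sequence $(P_{k})$ of trigonometric polynomials realizing the Besicovitch approximation \eqref{dejan1} of $f$ (with $\alpha p$ as the exponent there), and set $u_{k}(t):=\int_{-\infty}^{+\infty}\Gamma(t,s)P_{k}(s)\,ds$. By fact~(i) stated before the theorem, together with the splitting in fact~(ii), each function $u_{k}(\cdot)$ is almost periodic. It then remains to verify the analogue of \eqref{dejan}, namely
\[
\lim_{k\rightarrow +\infty}\limsup_{t\rightarrow +\infty}\frac{1}{2t^{ap}}\int^{t}_{-t}\bigl\| u(s)-u_{k}(s)\bigr\|^{\alpha p}\, ds=0.
\]
Writing $g_{k}:=\|f-P_{k}\|$ and invoking \eqref{srq}, one has the pointwise bound $\|u(t)-u_{k}(t)\|\leq M\int_{-\infty}^{+\infty}e^{-\omega|z|}g_{k}(t+z)\,dz$.

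In the main case $\alpha p>1$, splitting $e^{-\omega|z|}=e^{-\omega|z|/q'}e^{-\omega|z|/(\alpha p)}$ with $1/q'+1/(\alpha p)=1$ and applying the H\"older inequality gives $\|u(t)-u_{k}(t)\|^{\alpha p}\leq M^{\alpha p}(2/\omega)^{\alpha p/q'}\int_{-\infty}^{+\infty}e^{-\omega|z|}g_{k}(t+z)^{\alpha p}\,dz$. Integrating over $s\in[-t,t]$, applying the Fubini theorem and performing the change of variables $w=s+z$ reduces matters, up to the fixed constant $M^{\alpha p}(2/\omega)^{\alpha p/q'}$, to controlling
\[
\frac{1}{t^{ap}}\int_{-\infty}^{+\infty}e^{-\omega|z|}\Biggl(\int_{-t+z}^{t+z}g_{k}(w)^{\alpha p}\,dw\Biggr)\,dz.
\]
I would split the outer integral at $|z|=t$. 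On $|z|\leq t$ the inner integral is majorized by $\int_{-2t}^{2t}g_{k}^{\alpha p}\,dw$, so this portion is controlled by a fixed multiple of $t^{-ap}\int_{-2t}^{2t}g_{k}^{\alpha p}\,dw$, whose $\limsup_{t\rightarrow +\infty}$ equals a constant times $\limsup_{t\rightarrow +\infty}(2t)^{-ap}\int_{-2t}^{2t}g_{k}^{\alpha p}\,dw$ and hence tends to $0$ as $k\rightarrow +\infty$ by \eqref{dejan1}. On $|z|>t$, since $g_{k}$ is bounded (both $f$ and the trigonometric polynomial $P_{k}$ being bounded), the inner integral is at most $2t\|g_{k}\|_{\infty}^{\alpha p}$, so this portion is dominated by a constant multiple of $t^{1-ap}e^{-\omega t}$, which vanishes as $t\rightarrow +\infty$. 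This yields the displayed limit; the case $\alpha p=1$ is identical, omitting the H\"older step and arguing directly with the convolution bound.

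The main obstacle lies not in the convolution estimate — the exponential decay makes the weight integrable in every $L^{r}$-space and renders the Fubini and change-of-variables computation routine — but rather in justifying that the auxiliary functions $u_{k}(\cdot)$ are genuinely almost periodic. This is precisely where facts~(i) and (ii) enter: they reduce each $u_{k}$ to a pair of one-sided convolutions of a trigonometric polynomial against the Green's function, each of which is almost periodic by the cited results, so that $u_{k}(\cdot)$ is an admissible approximant certifying that $u(\cdot)$ belongs to $e-(x^{\alpha},t^{-a})-B^{p}({\mathbb R}:X)$.
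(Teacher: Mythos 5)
Your proof is correct and follows essentially the same route the paper indicates: Besicovitch approximation of $f$ by trigonometric polynomials, almost periodicity of the convolved approximants $u_{k}$ via facts (i)--(ii), and a H\"older--Fubini--change-of-variables error estimate in the style of Proposition \ref{stan}, here simplified by the exponential bound \eqref{srq}. Your only deviations are cosmetic: you estimate the two-sided kernel directly (splitting the $z$-integral at $|z|=t$ and using the sup-norm bound on $f-P_{k}$ in the far region) instead of splitting $u$ into two one-sided integrals as in fact (ii), and your implicit replacement of trigonometric-polynomial approximants by almost periodic ones is licensed by remark (iii) following Definition \ref{ebe}, whose boundedness condition ${\rm F}(t)\|1\|_{L^{p}(\Lambda_{t})}=O(1)$ is exactly where the hypothesis $ap\geq 1$ enters.
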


3. The use of Theorem \ref{krompo} is almost mandatory in the analysis of the existence and uniqueness of Besicovitch almost periodic type solutions for some classes of the abstract semilinear Cauchy problems. The first part of this result has a serious unpleasant drawback because we must impose that the function $G : {\mathbb R}^{n} \times Y \rightarrow Z$ from its formulation is Bohr ${\mathcal B}$-almost periodic, which automatically leads to the existence and uniqueness of almost periodic solutions of the abstract semilinear Cauchy problems under our consideration, in a certain sense. Here we will present the following illustrative application of Theorem \ref{krompo}(ii), with $\zeta=1,$ and Proposition \ref{stan}.

Suppose that the operator family $(R(t))_{t>0}\subseteq L(X)$ satisfies \eqref{rad}, as well as that $1\leq p,\ q<+\infty,$ $1/p+1/q=1,$ $q(\beta-1)>-1$ if $p>1$, and $\beta=1$ if $p=1.$ 
If $(R(t))_{t>0}$ is a solution operator family which governs solutions of the abstract fractional Cauchy inclusion
$
D_{t,+}^{\gamma}u(t)\in {\mathcal A}u(t)+g(t),$ $ t\in {\mathbb R},$ 
where $\gamma \in (0,1),$ and a closed multivalued linear operator ${\mathcal A}$ satisfies condition \cite[(P)]{nova-mono}, then it is usually said that a continuous function $t\mapsto u(t),$ $t\in {\mathbb R}$ is a mild solution of the abstract semilinear fractional Cauchy inclusion 
$$
\text{(SCP):} \ \ \,
D_{t,+}^{\gamma}u(t)\in {\mathcal A}u(t) +G(t;u(t)),\quad t\in {\mathbb R}
$$ 
if and only if 
\begin{align*}
u(t)=\int^{t}_{-\infty}R(t-s)G(s;u(s))\, ds,\quad t\in {\mathbb R}.
\end{align*}
Suppose that $G\in e-({\mathcal B},x,t^{-n/p})-B^{p}_{a,1}({\mathbb R}\times X : X),$ where
$
{\mathcal B}
$ denotes the family of all bounded subsets of $X.$ Suppose, further, that there exists a finite real constant $a>0$ such that \eqref{pravilo} holds with $\alpha=1$, $a\int^{\infty}_{0}\| R(s)\|\, ds<1,$
and $\sup_{t\in {\mathbb R},x\in B}\|G(t;x)\|<+\infty$ for every bounded subset $B$ of $X.$ It can be simply proved that the vector space $C_{b}({\mathbb R} : X) \cap e-(x,t^{-1/p})-B^{p}({\mathbb R} : X)$ equipped with the sup-norm is a Banach space.
Applying Proposition \ref{stan}(ii) and Theorem \ref{krompo}, we get that the mapping $\Phi : C_{b}({\mathbb R} : X) \cap e-(x,t^{-1/p})-B^{p}({\mathbb R} : X) \rightarrow  C_{b}({\mathbb R} : X) \cap e-(x,t^{-1/p})-B^{p}({\mathbb R} : X),$ 
given by $(\Phi u)(t):=\int^{t}_{-\infty}R(t-s)G(s;u(s))\, ds,$ 
$t\in {\mathbb R},$ is a well-defined contraction; therefore, there exists a unique mild solution $u(\cdot)$ of the abstract semilinear inclusion (SCP) which belongs to the space $C_{b}({\mathbb R} : X) \cap e-(x,t^{-1/p})-B^{p}({\mathbb R} : X).$ 
\vspace{0.1cm}

4. Our results about the invariance of Besicovitch-$p$-almost periodicity under the actions of infinite convolution products can be also formulated for the usual convolution 
\begin{align}\label{usual}
f\mapsto F(x)\equiv \int_{{\mathbb R}^{n}}h(x-y)f(y)\, dy,\quad x\in {\mathbb R}^{n},
\end{align}
provided that the function $h\in L^{1}({\mathbb R}^{n})$ has a certain growth order. Before stating a general result in this direction, we will first consider the inhomogeneous heat equation in ${\mathbb R}^{n}$ whose solutions are governed by the action of 
Gaussian semigroup 
\begin{align}\label{gauss}
F\mapsto (G(t)F)(x)\equiv \bigl(4\pi t\bigr)^{-n/2}\int_{{\mathbb R}^{n}}e^{-|y|^{2}/4t}F(x-y)\, dy,\quad t>0,\ x\in {\mathbb R}^{n}.
\end{align}
Without going into full details, we will only note that the formula \eqref{gauss} makes sense even if the function $F(\cdot)$ is polynomially bounded; our basic assumption will be that there exist two finite real numbers $b\geq 0$ and $c>0$ such that $|F(x)|\leq c(1+|x|)^{b},$ $x\in {\mathbb R}^{n}$ as well as that $a>0,$ $\alpha>0,$
$1\leq p<+\infty, $
$\alpha p\geq 1,$ $1/(\alpha p)+1/q=1$
and $F\in e-(x^{\alpha},t^{-a})-B^{p}({\mathbb R}^{n} : {\mathbb C}).$ 
Let us fix a real number $t_{0}$ in \eqref{gauss}. Then the mapping $x\mapsto (G(t_{0})F)(x),$ $x\in {\mathbb R}^{n}$ is well-defined and has the same growth as the inhomogeneity $f(\cdot)$. Now we will prove that belongs to the class $ e-(x^{\alpha},t^{-a})-B^{p}({\mathbb R}^{n} : {\mathbb C})$  as well. Let $\epsilon>0$, let 
$$
c_{t_{0}}:=\bigl(4\pi t_{0}\bigr)^{-n/2}\Bigl\| e^{-|\cdot|^{2}/8t_{0}}\Bigr\|_{L^{q}({\mathbb R}^{n})}^{\alpha p},
$$
and let $\epsilon_{0}>0$ be such that
$$
\epsilon_{0}\cdot 2^{ap}c_{t_{0}}\int_{{\mathbb R}^{n}}e^{-|y|^{2}p/8t_{0}}\bigl( 1+|y|\bigr)^{n}\, dy< \epsilon.
$$
We know that there exist a trigonometric polynomial $P(\cdot)$ and a finite real number $t_{1}>0$ such that
$$
\int_{[-t,t]^{n}}\bigl| F(x)-P(x) \bigr|^{\alpha p}\, dx<\epsilon_{0}t^{ap},\quad t\geq t_{1}.
$$
Furthermore, we know that the function $x\mapsto (G(t_{0})P)(x),$ $x\in {\mathbb R}^{n}$ is Bohr almost periodic (see \cite[Subsection 6.1.7]{nova-selected}) so that the final conclusion simply follows from the next computation (see also the computation from the proof of Proposition \ref{stan}):
\begin{align}
\notag \frac{1}{t^{ap}}&\int_{[-t,t]^{n}}\bigl| (G(t_{0})F)(x)-(G(t_{0})P)(x) \bigr|^{\alpha p}\, dx
\\\label{dojaja}& \leq \frac{c_{t_{0}}}{t^{ap}}\int_{[-t,t]^{n}}\int_{{\mathbb R}^{n}} e^{-|y|^{2}\alpha p/8t_{0}}\bigl| F(x-y)-P(x-y) \bigr|^{\alpha p}\, dy\, dx
\\\notag & =\frac{c_{t_{0}}}{t^{ap}}\int_{{\mathbb R}^{n}}e^{-|y|^{2}\alpha p/8t_{0}}\int_{[-t,t]^{n}} \bigl| F(x-y)-P(x-y) \bigr|^{\alpha p}\, dx\, dy
\\\notag & \leq  \frac{c_{t_{0}}}{t^{ap}}\int_{{\mathbb R}^{n}}e^{-|y|^{2}\alpha p/8t_{0}}\int_{[-t+|y|,t+|y|]^{n}} \bigl| F(x)-P(x) \bigr|^{\alpha p}\, dx\, dy
\\\notag & \leq \frac{c_{t_{0}}}{t^{ap}}\int_{{\mathbb R}^{n}}e^{-|y|^{2}\alpha p/8t_{0}}\epsilon_{0}2^{ap}\bigl(t^{ap}+|y|^{ap} \bigr)\, dy,\quad t\geq t_{1}.
\end{align}
The estimate \eqref{dojaja} is obtained by writing the term $e^{-|y|^{2}/4t_{0}}=e^{-|y|^{2}/8t_{0}}e^{-|y|^{2}/8t_{0}}$ and applying the H\"older inequality after that. This can be also done in the general case; arguing so, we can prove the following result:

\begin{thm}\label{start}
Suppose that $b \geq 0,$ $\alpha>0,$ $a>0,$
$1\leq p<+\infty, $
$\alpha p\geq 1$, $1/(\alpha p)+1/q=1,$
$f\in e-(x^{\alpha},t^{-a})-B^{p}({\mathbb R}^{n} : Y)$ 
and
$ \|f(x)\|_{Y}\leq c(1+|x|)^{b},$ $x\in {\mathbb R}^{n}.$ If there exist two functions $h_{1}: {\mathbb R}^{n} \rightarrow {\mathbb C}$ and $h_{2}: {\mathbb R}^{n} \rightarrow {\mathbb C}$ such that $h=h_{1}h_{2}$, $h_{1}\in L^{q}({\mathbb R}^{n})$ and $|h_{1}(\cdot)|^{\alpha} [1+|\cdot|]^{\zeta}\in L^{p}({\mathbb R}^{n})$ with $\zeta=\max(b\alpha,a),$
then the function $F(\cdot),$ given by \eqref{usual}, belongs to the class $e-(x^{\alpha},t^{-a})-B^{p}({\mathbb R}^{n} : Y)$ and has the same growth order as $f(\cdot).$ 
\end{thm}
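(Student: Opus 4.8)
The plan is to reproduce, for the general kernel $h=h_{1}h_{2}$, the model computation carried out just above for the Gaussian semigroup (culminating in \eqref{dojaja}), with the factorization $h=h_{1}h_{2}$ playing the role that the splitting $e^{-|y|^{2}/4t_{0}}=e^{-|y|^{2}/8t_{0}}\cdot e^{-|y|^{2}/8t_{0}}$ played there. First I would settle the well-definedness and the growth order of $F$. Bounding $\|F(x)\|_{Y}\le\int_{\mathbb{R}^{n}}|h_{1}(x-y)|\,|h_{2}(x-y)|\,\|f(y)\|_{Y}\,dy$ and applying the H\"older inequality with the conjugate exponents $q$ and $\alpha p$ (one factor measured in $L^{q}$, the weighted factor $|h_{1}(x-\cdot)|\,\|f(\cdot)\|_{Y}$ in $L^{\alpha p}$), the bound $\|f(y)\|_{Y}\le c(1+|y|)^{b}$ together with $1+|y|\le(1+|x|)(1+|x-y|)$ yields $\|F(x)\|_{Y}\le C(1+|x|)^{b}$; here the hypothesis $|h_{1}(\cdot)|^{\alpha}[1+|\cdot|]^{\zeta}\in L^{p}$ enters through $b\alpha\le\zeta$, which guarantees $\int_{\mathbb{R}^{n}}|h_{1}(z)|^{\alpha p}(1+|z|)^{b\alpha p}\,dz<+\infty$. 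The same hypotheses also give $h\in L^{1}(\mathbb{R}^{n})$, since $|h_{1}|^{\alpha}\in L^{p}$ (i.e. $h_{1}\in L^{\alpha p}$, using $\zeta\ge0$) and the complementary factor lies in $L^{q}$, so $h=h_{1}h_{2}\in L^{1}$ by H\"older.

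Next I would set up the approximation. Because $f\in e-(x^{\alpha},t^{-a})-B^{p}(\mathbb{R}^{n}:Y)$, for every $\epsilon_{0}>0$ there are a trigonometric polynomial $P(\cdot)$ and a number $t_{1}>0$ with $\int_{[-t,t]^{n}}\|f(x)-P(x)\|_{Y}^{\alpha p}\,dx<\epsilon_{0}t^{ap}$ for $t\ge t_{1}$. Since $h\in L^{1}(\mathbb{R}^{n})$, the convolution $h*P$ is again a trigonometric polynomial (each exponential $e^{i\langle\lambda_{j},\cdot\rangle}$ is reproduced carrying the factor $\widehat{h}(\lambda_{j})$), hence Bohr almost periodic; this is the approximant to $F$ inside $e-(x^{\alpha},t^{-a})-B^{p}(\mathbb{R}^{n}:Y)$. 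It then remains to estimate $t^{-ap}\int_{[-t,t]^{n}}\|F(x)-(h*P)(x)\|_{Y}^{\alpha p}\,dx$ and to show it becomes arbitrarily small as $\epsilon_{0}\downarrow0$, after taking $\limsup_{t\to+\infty}$.

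For this main estimate I would imitate \eqref{dojaja}. Bounding $\|F(x)-(h*P)(x)\|_{Y}\le\int_{\mathbb{R}^{n}}|h_{1}(x-y)|\,|h_{2}(x-y)|\,\|f(y)-P(y)\|_{Y}\,dy$ and applying H\"older (exponents $q$ and $\alpha p$), the $L^{q}$-norm of the complementary factor is a finite constant, while the remaining part contributes $\int_{\mathbb{R}^{n}}|h_{1}(x-y)|^{\alpha p}\|f(y)-P(y)\|_{Y}^{\alpha p}\,dy$. After integrating over $x\in[-t,t]^{n}$, interchanging the order of integration by Tonelli's theorem and translating the cube by $y$, the inner integral becomes at most $\int_{[-t-|y|,t+|y|]^{n}}\|f-P\|_{Y}^{\alpha p}\le\epsilon_{0}\,2^{ap}(t+|y|)^{ap}$ for $t$ large. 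Dividing by $t^{ap}$ and using $(t+|y|)^{ap}t^{-ap}\le2^{ap}(1+|y|)^{ap}$ for $t\ge1$, I reach a bound of the form $C\,\epsilon_{0}\int_{\mathbb{R}^{n}}|h_{1}(y)|^{\alpha p}(1+|y|)^{ap}\,dy$; since $a\le\zeta$, this integral is dominated by $\int_{\mathbb{R}^{n}}|h_{1}(y)|^{\alpha p}(1+|y|)^{\zeta p}\,dy<+\infty$, so the whole expression is $\le C'\epsilon_{0}$, and letting $\epsilon_{0}\downarrow0$ completes the argument.

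The main obstacle is precisely the interaction between the polynomial growth of $f$ and the Besicovitch averaging: after translating the cube, the error estimate unavoidably produces the factor $(t+|y|)^{ap}$ in place of $t^{ap}$, and one must absorb the surplus weight $(1+|y|)^{ap}$ against the kernel \emph{uniformly in} $t$. This is what forces the weighted integrability with exponent $\zeta=\max(b\alpha,a)$: the part $\zeta\ge a$ makes the approximation estimate converge, while $\zeta\ge b\alpha$ secures the well-definedness and growth order of $F$, both being delivered by the single condition $|h_{1}(\cdot)|^{\alpha}[1+|\cdot|]^{\zeta}\in L^{p}$. A secondary technical point is the justification of the interchange of integrations and of the order of the limits in the approximation parameter and in $t$, but these are routine once the dominating weight is in place.
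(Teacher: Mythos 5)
Your proposal is correct and is essentially the paper's own proof: the paper establishes Theorem \ref{start} precisely by repeating the Gaussian-semigroup computation culminating in \eqref{dojaja}, with the factorization $h=h_{1}h_{2}$ in place of the splitting $e^{-|y|^{2}/4t_{0}}=e^{-|y|^{2}/8t_{0}}\cdot e^{-|y|^{2}/8t_{0}}$, the same H\"older--Tonelli--cube-translation steps, and the same absorption of the surplus weight $(1+|y|)^{ap}$ into the hypothesis $|h_{1}(\cdot)|^{\alpha}[1+|\cdot|]^{\zeta}\in L^{p}({\mathbb R}^{n})$ (your remark that $h\ast P$ is literally a trigonometric polynomial with coefficients $\widehat{h}(\lambda_{j})$ is even a slightly cleaner substitute for the paper's citation that $G(t_{0})P$ is Bohr almost periodic). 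The one discrepancy is that you place the $L^{q}$-condition on $h_{2}$, whereas the statement as printed assigns both integrability conditions to $h_{1}$ and none to $h_{2}$; since the literal statement cannot suffice (nothing would then control $h_{2}$), and since in the Gaussian model $h_{1}=h_{2}$ so the two readings coincide, your reading is clearly the intended one and the printed hypothesis should be regarded as a typo.
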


We can simply apply Theorem \ref{krompo} and Theorem \ref{start} 
in the analysis of the existence and uniqueness of bounded Besicovitch-$p$-almost periodic solutions for a class of the semilinear Hammerstein integral equations of convolution type on ${\mathbb R}^{n};$ see \cite[p. 362]{nova-selected} for more details. 
It would be very tempting to incorporate Theorem \ref{start} in the analysis of the existence and uniqueness of Besicovitch-$p$-almost periodic solutions of the abstract ill-posed Cauchy problems whose solutions are governed by integrated solution operator families or $C$-regularized solution operator families (cf. \cite[pp. 543--545]{nova-selected} for more details). \vspace{0.1cm}

In the remaining applications, we will consider the usual case $\phi(x)\equiv x$ and the class of Besicovitch-$p$-almost periodic functions.\vspace{0.1cm}

5. Without going into full details, we would like to note that the argumentation contained in our analysis of \cite[Example 3, p. XXXV]{nova-selected} enables one to consider the existence and uniqueness of Besicovitch-$p$-almost periodic type solutions of the wave equation in ${\mathbb R}^{2}$ whose solutions are given by the famous d'Alembert formula. For example, if the functions $f(\cdot)$ and $g^{[1]}(\cdot)$ from this example are Besicovitch-$p$-almost periodic in ${\mathbb R}$, then the solution $u(x,t)$ will be Besicovitch-$p$-almost periodic in ${\mathbb R}^{2}.$ A similar conclusion can be clarified for the solutions of the wave equation given by the Kirchhoff (Poisson) formula; see \cite{nova-selected} for more details. \vspace{0.1cm}

6. In this issue, we continue our analysis of the evolution systems considered in the final application of \cite[Section 6.3, pp. 426--428]{nova-selected}. Suppose that $Y:=L^{r}({\mathbb R}^{n})$ for some $r\in [1,\infty)$  and $ A(t):= \Delta +a(t){\rm I}$, $t \geq 0$, where $\Delta$ is the Dirichlet Laplacian on $L^{r}(\mathbb{R}^{n}),$  and $ a \in L^{\infty}([0,\infty)) $. Then the evolution system $(U(t,s))_{t\geq s\geq 0}\subseteq L(Y)$ generated by the family $(A(t))_{t\geq 0}$ exists; this evolution system is given by $U(t,t):=I$ for all $t\geq 0,$ and
$$ 
[U(t,s)F]({\bf u}):=\int_{{\mathbb R}^{n}} K(t,s,{\bf u},{\bf v})F({\bf v}) \, d{\bf v}, \quad F\in L^{r}({\mathbb R}^{n}),\quad t> s\geq 0,
$$
where 
$$   
K(t,s,{\bf u},{\bf v}):= \bigl(4\pi (t-s)\bigr)^{-\frac{n}{2}} e^{\int_{s}^{t} a(\tau)\, d\tau }\exp \Biggl(-\frac{| x-y|^{2}}{4(t-s)}\Biggr),\quad 
t>s,\ {\bf u},\ {\bf v} \in \mathbb{R}^{n} .
$$
We know that, for every $ \tau \in \mathbb{R}^{n} $, we have  
$$
 K(t,s,{\bf u}+\tau,{\bf v}+\tau)=K(t,s,{\bf u},{\bf v}),\quad   t>s\geq 0,\ {\bf u},\ {\bf v} \in \mathbb{R}^{n} ,
$$
as well as that, under certain assumptions, a unique mild solution of the abstract Cauchy problem 
$
(\partial /\partial t)u(t,x) = A(t)u(t,x) ,$ $t > 0;$ $u(0,x) = F(x)$
is given by
$
u(t,x):=[U(t,0)F](x),$ $t\geq 0,$ $x\in {\mathbb R}^{n}.$
Suppose now that $F(\cdot)\in Y$ is Besicovitch-$p$-almost periodic for some finite exponent $p\geq 1.$ If $\epsilon>0$ is given in advance, then we can find a finite real number
$t_{0}>0$ and a trigonometric polynomial $P(\cdot)$ such that
$$
\int_{[-t,t]^{n}}\bigl| F({\bf u})-P({\bf u}) \bigr|^{p}\, d{\bf u}<\epsilon t^{n},\quad t\geq t_{0}.
$$
The function $
u_{P}(t,x):=[U(t,0)P](x),$ $t\geq 0,$ $x\in {\mathbb R}^{n}$ is well-defined, continuous and satisfies that, if $\tau \in {\mathbb R}^{n}$ is an $\epsilon$-almost period of $P(\cdot)$, then
\begin{align*}
& |u_{P}(t,{\bf u}+\tau)-u_{P}(t,{\bf u})|=\Biggl| \int_{{\mathbb R}^{n}}\bigl[ K(t,0,{\bf u}+\tau,{\bf v}) -K(t,0,{\bf u},{\bf v})\bigr] P({\bf v})\, d{\bf v} \Biggr|
\\& =\Biggl| \int_{{\mathbb R}^{n}}K(t,0,{\bf u}+\tau,{\bf v}+\tau) P({\bf v}+\tau)\, d{\bf v} -\int_{{\mathbb R}^{n}}K(t,0,{\bf u},{\bf v}) P({\bf v})\, d{\bf v} \Biggr|
\\& =\Biggl| \int_{{\mathbb R}^{n}}K(t,0,{\bf u},{\bf v}) \bigl[P({\bf v}+\tau)\, d{\bf v} - P({\bf v})\bigr]\, d{\bf v} \Biggr|
\\& \leq c_{t}\int_{{\mathbb R}^{n}}e^{-\frac{|{\bf u}-{\bf v}|^{2}}{4t}}|P({\bf v}+\tau)-P({\bf v})|\, d{\bf v}
\\& \leq c_{t}\epsilon \int_{{\mathbb R}^{n}}e^{-\frac{|{\bf u}-{\bf v}|^{2}}{4t}}\, d{\bf v}=c_{t}\epsilon \int_{{\mathbb R}^{n}}e^{-\frac{|{\bf v}|^{2}}{4t}}\, d{\bf v},\quad t>0,\ {\bf u}\in {\mathbb R}^{n};
\end{align*}
hence, the function $u_{P}(t,\cdot)$ is almost periodic for every fixed real number $t>0.$ Writing
$$
\int_{{\mathbb R}^{n}}K(t,0,{\bf u},{\bf v}) \bigl[F({\bf v})- P({\bf v})\bigr]\, d{\bf v} =\int_{{\mathbb R}^{n}}K(t,0,{\bf u},{\bf v}-{\bf u}) \bigl[F({\bf v}-{\bf u})- P({\bf v}-{\bf u})\bigr]\, d{\bf v},
$$
and the term $e^{-|{\bf v}|^{2}/4t}$ as $e^{-|{\bf v}|^{2}/8t}\cdot e^{-|{\bf v}|^{2}/8t}$ in the corresponding computation after that, we may conclude as before that the function $u(t,\cdot)$ is Besicovitch-$p$-almost periodic for every fixed real number $t>0.$\vspace{0.1cm}

7. In this part, we will present certain applications of Proposition \ref{sdffds} in the analysis of the existence and uniqueness of Besicovitch almost periodic solutions for certain classes of PDEs; see, e.g., \cite{salsa} and references cited therein. We will  revisit here the classical theories of quasi-linear partial differential equations of first order and the linear partial differential equations of second order with constant coefficients, considering solutions defined on certain proper subsets $\Lambda$ of ${\mathbb R}^{n},$ where $n\geq 2.$\vspace{0.1cm}

7.1. It is well known that the general solution of equation $u_{x}+u_{y}=u$ is given by $u(x,y)=g(y-x)e^{x},$ $(x,y)\in {\mathbb R}^{2},$ where $g :  {\mathbb R} \rightarrow {\mathbb R}$ is a continuously differentiable function. Suppose that $\Lambda :=(-\infty,0]\times {\mathbb R}$ and the function $g(\cdot)$ is Besicovitch-$q$-almost periodic for some finite exponent $q>1.$ Then there exists a sequence $(P_{k})$ of trigonometric polynomials such that
$$
\lim_{k\rightarrow +\infty}\limsup_{t\rightarrow +\infty}t^{-1}\int_{-t}^{t}\bigl|P_{k}(s)-g(s)\bigr|^{q}\, ds=0.
$$ 
Since $Q_{k}(x,y):=P_{k}(x-y),$ $(x,y)\in {\mathbb R}^{2}$ is a sequence of trigonometric polynomials of two variables, the above simply implies along with an elementary argumentation that
$$
\lim_{k\rightarrow +\infty}\limsup_{t\rightarrow +\infty} t^{-2}\int_{-t}^{t}\int_{-t}^{t}\bigl|Q_{k}(x,y)-g(x-y)\bigr|^{q}\, dx\, dy=0,
$$
so that the function $G(x,y):=g(x-y),$ $(x,y)\in {\mathbb R}^{2}$ is Besicovitch-$q$-almost periodic. Since the function $(x,y)\mapsto e^{x},$ $(x,y)\in \Lambda$  is Besicovitch-$p$-almost periodic for any finite exponent $p\geq 1,$ an application of Proposition \ref{sdffds} yields that the solution $u(x,y)$ is Besicovitch-$r$-almost periodic on $\Lambda,$ for any exponent $r\in [1,q).$\vspace{0.1cm}

7.2. Consider the linear partial differential equation of second order with constant coefficients:
\begin{align}\label{cdcd}
Au_{xx}+2Bu_{xy}+Cu_{yy}+2Du_{x}+2Eu_{y}+Fu=0,
\end{align}
where $A,\ B, \ C, \ D, E,\ F$ are real constants such that $B>0,$ $C>0,$
$B^{2} \geq AC,$ $E^{2}\geq CF,$ $B^{2}> E^{2}-CF,$ and
$$
\bigl(BE-CD\bigr)^{2}=\bigl(B^{2}-AC\bigr) \bigl( E^{2}-CF\bigr).
$$
As proposed by J. D. Ke\v cki\' c in \cite{jovan} (see also \cite{jovan1}), the general solution $u(x,y)$ of the equation \eqref{cdcd} is given by
\begin{align*}
u(x,y)&=e^{\Bigl(-\frac{B}{C}+\frac{\sqrt{E^{2}-CF}}{C}\Bigr)y}f\Biggl(x+\Bigl(-\frac{E}{C}+\frac{1}{C}\sqrt{B^{2}-AC} \Bigr)y\Biggr)
\\& +e^{\Bigl(-\frac{B}{C}-\frac{\sqrt{E^{2}-CF}}{C}\Bigr)y}g\Biggl(x+\Bigl(-\frac{E}{C}-\frac{1}{C}\sqrt{B^{2}-AC} \Bigr)y\Biggr),
\end{align*}
where $f : {\mathbb R} \rightarrow {\mathbb R}$ and $g : {\mathbb R} \rightarrow {\mathbb R}$ are arbitrary two times continuously differentiable functions.
Suppose that $\Lambda={\mathbb R} \times [0,+\infty)$ and the functions $f(\cdot)$ and $g(\cdot)$ are Besicovitch-$q$-almost periodic for some finite exponent $q> 1.$ Arguing as above, an application of Proposition \ref{sdffds} yields that the solution $u(x,y)$ of \eqref{cdcd} is Besicovitch-$r$-almost periodic on $\Lambda,$ for any exponent $r\in [1,q).$

We close this section with the observation that it is not clear how one can prove that the solutions considered above are Besicovitch-$q$-almost periodic.

\section{Conclusions and final remarks}\label{konk}

In this paper, we have considered 
various classes of multi-dimensional Besicovitch almost periodic type functions.
The main structural properties of multi-dimensional Besicovitch almost periodic type
functions are established. Many structural results known in the one-dimensional setting are reconsidered and extended to the functions of the form $F: \Lambda \times X \rightarrow Y,$ where $\emptyset \neq \Lambda \subseteq {\mathbb R}^{n},$ $X$ and $Y$ are complex Banach spaces. We have also furnished many illustrative examples, useful remarks and applications to
the abstract Volterra integro-differential equations.

Concerning certain drawbacks of this work, we would like to note that we have not considered here the differentiation and integration of multi-dimensional Besicovitch almost periodic type functions as well as the dual spaces of  multi-dimensional Besicovitch almost periodic type functions.
Besides many other topics, in this paper we have not reconsidered the notions of ${\mathcal M}^{p}$-almost periodicity and  ${\mathcal M}^{p}$-regularity in the multi-dimensional setting as well (see \cite{deda} and \cite{berta} for more details about the subject). The class of metrical multi-dimensional Besicovitch almost periodic functions will be considered somewhere else following the general approach obeyed in \cite{metrical}.

The class of
multi-dimensional $\rho$-almost periodic type functions, extending the class of multi-dimensional $c$-almost periodic type functions when $\rho=c{\rm I},$
have recently been investigated by M. Fe\v ckan et al. \cite{rho}; see also \cite{nova-mse}-\cite{brno}. 
The class of Besicovitch-$(p,c)$-almost periodic functions, where $1\leq p <\infty$ and  $c\neq 1$, has not been analyzed in the existing literature so far, even in the one-dimensional setting. At the end of paper, we will briefly explain that the method proposed in the proof of \cite[Proposition 2.8]{c1}, a fundamental result about $c$-almost periodic functions saying that any $c$-almost periodic function $f: {\mathbb R}\rightarrow Y$ is almost periodic, cannot be essentially employed in the analysis of corresponding classes of
Besicovitch-$(p,c)$-almost periodic type functions:

\begin{example}\label{brasa}
Suppose that $p\geq 1,$ $c\in {\mathbb C}\setminus \{0\},$ $m\in {\mathbb N},$ $c^{m}=1,$ and the function $f : {\mathbb R} \rightarrow {\mathbb C}
$ is Besicovitch-$(p,c)$-almost periodic, i.e., for every $\epsilon>0$ there exists a satisfactorily uniform set $A=\{ \tau_{i} : i\in {\mathbb Z}\}\subseteq {\mathbb R}$ such that
\begin{align}\label{jednac}
\limsup_{t\rightarrow +\infty}\Biggl( \frac{1}{2t}\int^{t}_{-t}\bigl| f(s+\tau_{i})-cf(s)\bigr|^{p}\, ds\Biggr)^{1/p}<\epsilon
\end{align}
and, for every $l>0,$ 
\begin{align}\label{jednac2}
\limsup_{t\rightarrow +\infty}\Biggl( \frac{1}{2t}\int^{t}_{-t}\Biggl[\limsup_{k\rightarrow +\infty}\frac{1}{2k+1}\sum_{i=-k}^{k}l^{-1}\int^{x+l}_{x}\bigl| f(s+\tau_{i})-cf(s)\bigr|^{p}\, ds \Biggr]\, dx\Biggr)^{1/p}<\epsilon;
\end{align}
cf. \cite{besik} for the notion of a satisfactorily uniform set in ${\mathbb R}.$
Then the set $mA$ is also satisfactorily uniform and for each $i\in {\mathbb Z}$ we have:
\begin{align*}
\int^{t}_{-t}&\bigl| f(s+m\tau_{i})-f(s)\bigr|^{p}\, ds
\\& \leq \int_{-t}^{t}\Biggl(\sum_{j=0}^{m-1}|c|^{j} \bigl| f(s+(m-j)\tau_{i})-cf(s+(m-j-1)\tau_{i}) \bigr|\Biggr)^{p}\, ds
\\& \leq c_{m,p}\sum_{j=0}^{m-1}\int^{t+(m-j-1)\tau_{i}}_{-t+(m-j-1)\tau_{i}}\bigl| f(s+\tau_{i})-cf(s) \bigr|^{p}\, ds
\\& \leq 2mc_{m,p}\epsilon^{p}\Bigl(t+\bigl| t+\tau_{i} \bigr|+...+|t+(m-1)\tau_{i}|\Bigr)
\\& \leq 2mc_{m,p}\epsilon^{p}\Bigl( mt+\frac{m^{2}}{2}|\tau_{i}| \Bigr)\leq \mbox{Const.} \cdot 2t\epsilon^{p},\quad t\geq t_{0}(\epsilon,i),
\end{align*}
for some finite real constants $c_{m,p}>0$ and $ t_{0}(\epsilon,i)>0.$ Therefore, \eqref{jednac} holds with the number $c$ replaced by the number $1$ therein, which can be simply transferred to the multi-dimensional setting. But, it is not clear how we can deduce the corresponding conclusion for the equation \eqref{jednac2}; arguing as above, we can only prove that there exists a finite real constant $c_{m,p}>0$ such that:
\begin{align*}
\int^{t}_{-t}&\Biggl[\limsup_{k\rightarrow +\infty}\frac{1}{2k+1}\sum_{i=-k}^{k}l^{-1}\int^{x+l}_{x}\bigl| f(s+\tau_{i})-f(s)\bigr|^{p}\, ds \Biggr]\, dx
\\&  \leq c_{m,p}\Biggl\{ \int^{t}_{-t}\Biggl[\limsup_{k\rightarrow +\infty}\frac{1}{2k+1}\sum_{i=-k}^{k}l^{-1}\int^{x+l}_{x}\bigl| f(s+\tau_{i})-cf(s)\bigr|^{p}\, ds \Biggr]\, dx
\\& +\int^{t}_{-t}\Biggl[\limsup_{k\rightarrow +\infty}\frac{1}{2k+1}\sum_{i=-k}^{k}l^{-1}\int^{x+\tau_{i}+l}_{x+\tau_{i}}\bigl| f(s+\tau_{i})-cf(s)\bigr|^{p}\, ds \Biggr]\, dx
\\& +...+\int^{t}_{-t}\Biggl[\limsup_{k\rightarrow +\infty}\frac{1}{2k+1}\sum_{i=-k}^{k}l^{-1}\int^{x+(m-1)\tau_{i}+l}_{x+(m-1)\tau_{i}}\bigl| f(s+\tau_{i})-cf(s)\bigr|^{p}\, ds \Biggr]\, dx\Biggr\},
\end{align*}
which seems to be completely inapplicable for our purposes. 
\end{example}

To the best knowledge of the author, the class of Besicovitch-$p$-almost periodic functions in ${\mathbb R}^{n}$ has not yet been considered in a Bohr like manner ($n\geq 2$).
For further information concerning Besicovitch almost periodic  functions in ${\mathbb R}^{n}$ and satisfactorily uniform sets in ${\mathbb R}^{n}$, we also refer the reader to the forthcoming research monograph \cite{advances}.


\begin{thebibliography}{99}

\bibitem{andreano}
F. Andreano, R. Grande, Convolution on spaces of locally summable functions, J. Funct.
Spaces Appl. {\bf 6} (2008), 187--203.

\bibitem{andreano1}
F. Andreano, M. A. Picardello, Approximate identities on some homogeneous Banach
spaces, Monatsh. Math. {\bf 158} (2009), 235--246.

\bibitem{deda}
J. Andres, A. M. Bersani, R. F. Grande,
Hierarchy of almost-periodic function spaces,
Rend. Mat. Appl. (7) {\textbf 26} (2006), 121-188.

\bibitem{avant}
A. Avantaggiati, G. Bruno, R. Iannacci, Classical and new results on Besicovitch spaces
of almost-periodic functions and their duals, Quaderni del Dipart. Me. Mo. Mat., La Sapienza
Roma (1993), 1--50.

\bibitem{ayachi}
M. Ayachi, J. Blot, Variational methods for almost periodic solutions of a class
of neutral delay equations, Abstract Appl. Anal. 2008, Article ID 153285, 13 pages
doi:10.1155/2008/153285.

\bibitem{bass}
J. Bass,
Espaces de Besicovitch, fonctions presque-p\'eriodiques,
fonctions pseudo-al\' eatoires
Bulletin de la S. M. F. {\bf 91} (1963), 39--61.

\bibitem{berta}
J. P. Bertrandias, 
Espaces de fonctions bornees et continues en moyenne asymptotique
d'ordre $p,$ 
Bull. Soc. Math. France, M\' emoire {\bf 5} (1966), 1--106.

\bibitem{besik}
A. S. Besicovitch,
Almost Periodic Functions,
Dover Publ., New York, 1954.

\bibitem{besik1}
A. S. Besicovitch, On generalized almost periodic functions, Proc. London Math. Soc. {\bf 26}
(1927), 495--512.

\bibitem{besik2}
A. S. Besicovitch,
Analysis of conditions of almost periodicity, 
Acta Math. {\bf 58}
(1932), 217--230.

\bibitem{besik2}
A. S. Besicovitch, H. Bohr, Almost periodicity and general trigonometric series, Acta
Math. {\bf 57} (1931), 203--292.

\bibitem{folner}
H. Bohr, E. F$\o$lner, On some types of functional spaces: A contribution to the theory of almost
periodic functions, 
Acta Math. {\bf 76} (1944), 31--155.

\bibitem{bruno0}
G. Bruno, R. Grande, A compactness criterion in $B^{p}_{ap}$-spaces, Rend. Accad. Naz. Sci. XL,
Mem. Mat. Appl. {\bf 114} (1996), 95--121.

\bibitem{bruno}
G. Bruno, R. Grande, Compact embedding theorems for Sobolev-Besicovitch spaces of
almost periodic functions, Rend. Accad. Naz. Sci. XL Mem. Mat. Appl. (5) {\bf 20} (1996), 157--173.

\bibitem{casado}
J. Casado Diaz, I. Gayte, The two-scale convergence method applied to generalized Besicovitch
spaces, Proc. R. Soc. Lond. A {\bf 458} (2002), 2925--2946.

\bibitem{casado1}
J. Casado Diaz, I. Gayte, A derivation theory for generalized Besicovitch spaces and its
application for partial differential equations, Proc. R. Soc. Edinb. A {\bf 132} (2002), 283--315.

\bibitem{marko-manuel-ap}
A. Ch\'avez, K. Khalil, M. Kosti\'c, M. Pinto,
Multi-dimensional almost periodic type functions and applications, submitted. 2020. arXiv:2012.00543.

\bibitem{marko-manuel-ap-st}
A. Ch\'avez, K. Khalil, M. Kosti\'c, M. Pinto,
Stepanov multi-dimensional almost periodic functions and applications, submitted. 2020. 
https://hal.archives-ouvertes.fr/hal-03035195.

\bibitem{cifre}
A. N. Dabboucy, H. W. Davies,
A new characterization of Besicovitch almost periodic functions,
Math. Scand.
{\bf 28} (1971), 341--354.

\bibitem{diagana}
T. Diagana,
Almost Automorphic Type and Almost Periodic Type Functions in Abstract Spaces,
Springer-Verlag, New York, 2013.

\bibitem{variable}
L. Diening, P. Harjulehto, P. H\"ast\"uso, M. Ruzicka,
Lebesgue and Sobolev Spaces with Variable
Exponents,
Lecture Notes in Mathematics, Springer, Heidelberg, 2011.

\bibitem{ding-prim}
H.-S. Ding, W. Long, G. M. N'Gu\' er\' ekata,
Almost periodic solutions to abstract semilinear
evolution equations with Stepanov almost
periodic coefficients, 
J. Comput. Anal. Appl. {\bf 13} (2011), 231--242.

\bibitem{doss0}
R. Doss,
Groupes compacts et fonctions presque p\'eriodiques g\'en\'eralis\'ees, 
Bull. Sci. Math. {\bf 77} (1953), 186--194.

\bibitem{doss}
R. Doss,
On generalized almost periodic functions,
Annals of Math. {\bf 59}
(1954), 477--489.

\bibitem{doss1}
R. Doss,
On generalized almost periodic functions-II,
J. London Math. Soc.
{\bf 37} (1962), 133--140.

\bibitem{sets}
S. Favorov,
Bohr and Besicovitch almost periodic sets and quasicrystals, 
Proc. Amer. Math. Soc. {\bf 140} (2012), 1761--1767.

\bibitem{rho}
M. Fe\v{c}kan, M. T. Khalladi, M. Kosti\' c, A. Rahmani,
Multi-dimensional $\rho$-almost periodic type functions and applications,
submitted. 2021. arXiv:2109.10223.

\bibitem{fedorov}
V. E. Fedorov, M. Kosti\' c,
Multi-dimensional Weyl almost periodic type functions and applications,
submitted. 2021. arXiv:2101.11754.

\bibitem{fei}
Y. Fei, Y. Li,
Almost automorphic solutions in the sense of Besicovitch to
nonautonomous semilinear evolution equations, Mathematics {\bf 10}
(2022), 435. https://doi.org/10.3390/math10030435.

\bibitem{fink}
A. M. Fink,
Almost Periodic Differential Equations,
Springer-Verlag, Berlin, 1974.

\bibitem{grande1}
R. Grande, Fourier multipliers for Besicovitch spaces, Z. Anal. Anwendungen {\bf 17} (1998),
917--935.

\bibitem{grande2}
R. Grande, P. Santucci, Anisotropic Sobolev-Besicovitch spaces and a semielliptic differential
equation, Rend. Accad. Naz. Sci. XL Mem. Mat. Appl. (5) {\bf 21} (1997), 53--67.

\bibitem{gaston}
G. M. N'Gu\' er\' ekata,
Almost Automorphic and Almost Periodic Functions
in Abstract Spaces,
Kluwer Acad. Publ, Dordrecht, 2001.

\bibitem{haraux}
A. Haraux, P. Souplet,
An example of uniformly recurrent function which is not almost periodic,
J. Fourier Anal. Appl. {\bf 10} (2004), 217--220.

\bibitem{hilman}
T. R. Hillmann, Besicovitch-Orlicz spaces of almost periodic functions., Real and stochastic
analysis, Wiley Ser. Probab. Math. Stat. Probab. Math. Stat. 119-167., 1986.

\bibitem{iann}
R. Iannacci, A. M. Bersani, G. Dell'Acqua, P. Santucci, Embedding theorems for
Sobolev-Besicovitch spaces of almost periodic functions, J. Anal. Appl. {\bf 17} (1998), 443--457.

\bibitem{jovan}
J. D. Ke\v cki\'c, 
On integration of linear partial differential equations with constant coefficients,
Math. Balk. {\bf 1} (1971), 134--139.

\bibitem{jovan1}
J. D. Ke\v cki\'c, 
Complete integrals of a class of linear homogeneous partial differential equations,
Publ. Inst. Math., Nouv. S\'er {\bf 10} (1970), 143--145.

\bibitem{c1}
M. T. Khalladi, M. Kosti\' c, M. Pinto, A. Rahmani, D. Velinov,
$c$-Almost periodic type functions and applications,
Nonauton. Dyn. Syst. {\bf 7} (2020), 176--193.

\bibitem{nova-mono}
M. Kosti\'c,
Almost Periodic and Almost Automorphic Type Solutions to Integro-Differential Equations,
W. de Gruyter, Berlin, 2019.

\bibitem{nova-selected}
M. Kosti\'c,
Selected Topics in Almost Periodicity,
W. de Gruyter, Berlin, 2022.

\bibitem{advances}
M. Kosti\'c,
Advances in Almost Periodicity,
Book Manuscript, 2022.

\bibitem{nova-mse}
M. Kosti\'c,
Multi-dimensional $c$-almost periodic type functions and applications,
Electron. J. Differential Equations, accepted. arXiv:2012.15735.

\bibitem{brno}
M. Kosti\'c,
Generalized $c$-almost periodic type functions in ${\mathbb R}^{n},$ 
Arch. Math. (Brno) {\bf 57} (2021), 221--253.

\bibitem{metrical}
M. Kosti\'c,
Metrical almost periodicity and applications. 
submitted. arXiv:2111.14614.

\bibitem{doss-rn}
M. Kosti\'c, W.-S. Du, V. E. Fedorov,
Doss $\rho$-almost periodic type functions in ${\mathbb R}^{n},$
Mathematics {\bf 9} (2021), 2825. https://doi.org/10.3390/math9212825.

\bibitem{kovanko}
A. S. Kovanko,
On a certain property and a new definition of generalized almost
periodic functions of A. S. Besicovitch, 
Ukrainian Math. J. {\bf 8} (1956), 273--288
(in Russian).

\bibitem{kovanko1}
A. S. Kovanko,
Sur une propri\'et'e p\'eriodique des fonctions $\tilde{B}$ presque-p\'eriodiques,
Ann. Pol. Math. VIII (1960), 271--275.

\bibitem{lenz}
D. Lenz, T. Spindeler, N. Strungaru, Pure point diffraction and mean, Besicovitch and
Weyl almost periodicity, preprint. 2020. arXiv:2006.10821.

\bibitem{188}
M. Levitan,
Almost Periodic Functions,
G.I.T.T.L., Moscow, 1953 (in Russian).

\bibitem{y-li}
Y. Li, M. Huang, B. Li, 
Besicovitch almost periodic solutions for fractional-order
quaternion-valued neural networks with discrete and distributed delays, Math. Meth. Appl. Sci. 2021; 1--18.
doi:10.1002/mma.8070.

\bibitem{schnaubelt}
L. Maniar, R. Schnaubelt,
Almost periodicity of inhomogeneous parabolic evolution
equations, 
Lecture Notes in Pure and Appl. Math. \textbf{234}, Dekker, New York, 2003,
299--318.

\bibitem{marcin}
J. Marcinkiewicz, Une remarque sur les espaces de M. Besicovitch, C. R. Acad. Sc. Paris
{\bf 208} (1939), 57--159.

\bibitem{morsli}
M. Morsli, F. Bedouhene, On the strict convexity of the Besicovitch-Orlicz space of almost
periodic functions, Rev. Mat. Complut. {\bf 16} (2003), 399--415.

\bibitem{pankov}
A. A. Pankov,
Bounded and Almost Periodic Solutions of Nonlinear Operator
Differential Equations, Kluwer Acad. Publ., Dordrecht, 1990.

\bibitem{panov}
E. Yu. Panov,
On the Cauchy problem for scalar conservation
laws in the class of Besicovitch almost periodic
functions: global well-posedness and decay
property,
J. Hyperbolic Diff. Equ. {\bf 13} (2016), 633--659.

\bibitem{piao}
D. Piao, J. Sun, Besicovitch almost periodic solutions for a class of second order differential
equations involving reflection of the argument, Electron. J. Qual. Theory Differ. Equ. {\bf 41}
(2014), 1--8.

\bibitem{pic}
M. A. Picardello, Function spaces with bounded means and their continuous functionals,
Abstract Appl. Anal., vol. 2014, Article ID 609525, 26 pages.

\bibitem{salsa}
S. Salsa,
Partial Differential
Equations in Action:
From Modelling
to Theory,
Springer-Verlag Italia,
Milano, 2008.

\bibitem{sango}
M. Sango, N. Svanstedt, J. L. Woukeng, Generalized Besicovitch spaces and application
to deterministic homogenization, Nonlin. Anal. TMA {\bf 74} (2011), 351--379.

\bibitem{30}
S. Zaidman,
Almost-Periodic Functions in Abstract Spaces,
Pitman Research Notes in
Math, Vol. \textbf{126}, Pitman, Boston, 1985.

\end{thebibliography}
\end{document}